\newtheorem{thm}{Theorem}[section]
\newtheorem{cor}[thm]{Corollary}
\newtheorem{lem}[thm]{Lemma}
\newtheorem{prop}[thm]{Proposition}
\newtheorem{defn}[thm]{Definition}
\newtheorem{rem}[thm]{Remark}
\newcommand{\setR}{{\mathord{\mathbb R}}}
\newcommand{\mymarginpar}[1]{}
\newcommand{\M}{{\mathord{\mathcal M}}}
\def\supp{\mathop{\mathrm{supp}}}
\def\part{\mathop{\partial}}
\numberwithin{equation}{section}
\begin{document}
\title[ Elliptic equations in Weighted Besov spaces ]
 {Elliptic equations in Weighted Besov spaces  on asymptotically flat Riemannian
manifolds}
\author[U. Brauer]{Uwe Brauer${}^\dagger$}\thanks{$\dagger$
 Uwe Brauer's research was partially supported by grant MTM2012-31928}

\address{%
  Departamento de  Matem\'atica Aplicada\\ Universidad Complutense Madrid
28040 Madrid, Spain}
\email{oub@mat.ucm.es}
    
\author[L. Karp]{Lavi Karp*}

\address{%
Department of Mathematics\\ ORT Braude College\\
P.O. Box 78, 21982 Karmiel\\ Israel}

\email{karp@braude.ac.il}

\thanks{*Research  supported  ORT
Braude College's Research Authority}

\subjclass[2010]{Primary 35J91, 58J05; Secondary 35J61, 83F99}

\keywords{ Asymptotically flat Riemannian manifolds, weighted Besov spaces,
elliptic equations, constraint equations, Brill--Cantor criterion,
Einstein--Euler system}

\begin{abstract}
  This paper deals with the applications of weighted Besov spaces to
  elliptic equations on asymptotically flat Riemannian manifolds, and
  in particular to the solutions of Einstein's constraints equations.
  We establish existence theorems for the Hamiltonian  and the
  momentum constraints with constant mean curvature and with a
  background metric that satisfies very low regularity assumptions.

  These results extend the regularity results of Holst, Nagy and
  Tsogtgerel about the constraint equations on compact manifolds in
  the Besov space $B_{p,p}^s$ \cite{Holst_Nagy_Tsogtgerel}, to
  asymptotically flat manifolds.
  We also consider the Brill--Cantor criterion in the weighted Besov
  spaces.
  Our results improve the regularity assumptions on asymptotically
  flat manifolds \cite{Choquet_Isenberg_Pollack,
    maxwell05:_solut_einst}, as well as they enable us to construct
  the initial data for the Einstein--Euler system.

\end{abstract}

\maketitle

\section{Introduction}
\label{sec:introduction}

A special feature of the Einstein equations is that initial data
cannot be prescribed freely.
They must satisfy constraint equations.
To prove the existence of a solution of the Einstein equations, it is
first necessary to prove the existence of a solution of the
constraints.
The usual method of solving the constraints relies on the theory of
elliptic equations.
Since asymptotically flat metric falls off only as $r^{-1}$ as
$r\to\infty$ on a spacelike slice and the positive mass theorem
\cite{MR526976
}implies that any attempt to impose faster fall-off excludes all but
the trivial solution the metric does not belong to a Sobolev space.
The usual way to get around this is to replace the ordinary Sobolev
space by a weighted one.

Therefore much attention has been devoted to solutions of the Einstein
constraint equations in asymptotically flat space--times by means of
weighted Sobolev spaces as an essential tool.
These spaces are defined as the completion of $C_0^\infty(\mathbb{R}^n)$
with respect to the norm
\begin{equation}
 \label{eq:intro:5}
\|u\|_{{m,p,\delta}}
 =\left(\sum_{|\alpha|\leq m}\int\left|(1+|x|)^{\delta+|\alpha|}
\nobreakspace\nobreakspace\partial^\alpha u\right|^p dx\right)^{\frac{1}{p}},
\end{equation}
and denoted by $W_{m,\delta}^p$.

Elliptic equations on $W_{m,\delta}^p$ spaces were first considered by
Nirenberg and Walker in
\cite{nirenberg73:_null_spaces_ellip_differ_operat_r}.
This paper led to numerous publications dealing with its applications
to the solutions of Einstein constraint equations in asymptotically
flat space--times.
Some significant contributions include the papers of Bartnik
\cite{bartnik86}, Cantor \cite{cantor75:_spaces_funct_condit_r,
  cantor79}, Choquet--Bruhat and Christodoulou
\cite{choquet--bruhat81:_ellip_system_h_spaces_manif_euclid_infin} and
Christodoulou and O'Murchadha \cite{OMC}.
Afterward the regularity assumptions were improved, by
Choquet--Bruhat, Isenberg, Pollack and York for the Einstein--scalar
field gravitational constraint equations
\cite{Choquet_Isenberg_Pollack, y.00:_einst_euclid}, and by Maxwell in
the vacuum case and with boundary conditions
\cite{maxwell05:_solut_einst}.
In both papers the authors assumed that the background metric is
locally in $W_2^p$ when $p>\frac{n}{2}$, and they obtained a solution
to the constraint equations with a conformal metric in the same space.
In the case $p=2$, Maxwell constructed low regularity solutions of the
vacuum Einstein constraint equations with a metric in the weighted
Bessel
potential spaces $H^s_{\rm loc}$ for   $s>\frac{n}{2}$
\cite{maxwell06:_rough_einst}.

On compact manifolds one seeks solutions to the Einstein constraint
equations in the unweighted Sobolev spaces.
Choquet--Bruhat obtained solutions with a metric in $W_{2}^p$ and for
$p>\frac{n}{2}$ \cite{choquet_04}.
Later Maxwell improved the regularity in the Bessel potential spaces
$H^s$ for $s>\frac{n}{2}$ \cite{maxwell05:_rough_einst}.
Holst, Nagy and Tsogtgerel \cite{Holst_Nagy_Tsogtgerel} study
solutions of the Einstein constraints in the Sobolev--Sobolevskij
spaces $W_s^p$, and obtained solutions to the Hamiltonian and momentum
constraints in these spaces when $s\in (\frac{n}{p},\infty)\cap
[1,\infty)$ and $p\in(1,\infty)$.
Thus their results cover \cite{choquet_04} in the case $s=2$ and
\cite{maxwell05:_rough_einst} in the case $p=2$.

One of our main results is the extension of the regularity result of
\cite{Holst_Nagy_Tsogtgerel} to asymptotically flat manifolds.
In doing so we use Triebel's extension of the
$W_{m,\delta}^p$--spaces to the fractional order spaces
$W_{s,\delta}^p$--spaces, where $s$ and $\delta$ are real numbers,
\cite{triebel76:_spaces_kudrj2} (see definition \ref{def:spaces}).
The $W_{s,\delta}^p$--spaces are constructed by means of the Besov
space $B_{p,p}^s$ (see (\ref{eq:norm2})).
The Besov spaces coincide with the Sobolev--Sobolevskij spaces whenever $s$ is
positive (see e.g. \cite[Ch.6]{berg_lofstrom_76}) , and they are suitable for
interpolation both for negative and positive $s$.
This is a vital property of the Besov spaces that enable us to prove
certain properties by means of interpolation.

In the present paper we prove existence and uniqueness results for the
Hamiltonian and momentum constraints with constant mean curvature
(CMC) in the $W_{s,\delta}^p$--spaces, and under the conditions $s\in
(\frac{n}{p},\infty)\cap [1,\infty)$, $\delta\in
(-\frac{n}{p},n-2-\frac{n}{p})$ and for all $p\in(1,\infty)$.

Thus we improve the regularity \cite{Choquet_2009,
  Choquet_Isenberg_Pollack, y.00:_einst_euclid,
  maxwell05:_solut_einst} and extend the range of $p$  to $(1,\infty)$.
\cite{Choquet_Isenberg_Pollack, maxwell05:_solut_einst}.
Holst et al.~achieved solutions to the
constrain equations on compact manifolds  with low
regularity,  without the CMC--condition.
We believe that the present paper is a step toward developing
a theory of rough solutions in asymptotically flat manifolds for the
non--CMC case.

The Brill--Cantor condition (\ref{eq:brill-cantor}) suggests a criterion under 
which a given metric in an asymptotically flat manifold can be rescaled to
yield 
a conformal metric with zero scalar curvature (see 
\S\ref{sec:brill-cantor-theorem}). This  criterion is related to the Yamabe 
conformal invariant classes, but has a different interpretation on 
asymptotically flat manifolds.   For an enlightening  discussion about this 
criterion see \cite{friedrich_11}. Cantor and Brill suggested this condition
and showed its equivalent to the existence of  a flat metric
for an integer $m$ greater than  $\frac{n}{p}+2$ and
$1<p<\frac{2n}{n-2}$, \cite{cantor81:_laplac}.
Since then  the regularity assumptions  were improved by several authors
\cite{Choquet_2009, 
Choquet_Isenberg_Pollack, y.00:_einst_euclid, maxwell05:_solut_einst}, however, 
they dealt only with Sobolev spaces of integer order, and under the
restriction that  $p>\frac{n}{2}$. For $p=2$ it was  proved for all
$s>\frac{n}{2}$ 
in \cite{maxwell06:_rough_einst}.

We treat the Bill--Cantor condition in the weighted Besov spaces
$W_{s,\delta}^p$ and establish its equivalent to the
  existence of  a metric with zero scalar curvature for $s\in
(\frac{n}{p},\infty)\cap [1,\infty)$, $\delta\in
(-\frac{n}{p},n-2-\frac{n}{p})$ and all $p\in(1,\infty)$.
To conclude, our results generalize \cite{maxwell06:_rough_einst} to
$p\in(1,\infty)$, and improve the regularity of \cite{Choquet_2009,
  Choquet_Isenberg_Pollack, y.00:_einst_euclid,
  maxwell05:_solut_einst}.

One of the essential difficulties is to prove the continuity of the Yamabe
functional (\ref{eq:brill-cantor}) in terms of the norm of $W_{s,\delta}^p$. In 
previous publications  the restriction of $p>\frac{n}{2}$  was caused  by two
reasons: the Sobolev embedding theorem; and  a certain application of the
generalized H\"older inequality that requires the condition $p>\frac{n}{2}$. We,
however,  overcome this obstacle by improving the multiplication property of
functions in $W_{s,\delta}^p$--spaces (see Proposition \ref{prop:11}).

The outline of the paper is as follows.  In the first part of Section
\ref{sec:Besov} we sketch Triebel's construction of the weighted Besov spaces
and state their main properties. We also pay attention to the bilinear form on
these  spaces  since the dual representation  of the norm
plays a role in the proof of
existence of solutions of nonlinear equations.
In the second part we establish tools which
are needed for PDE in these spaces, including embeddings, pointwise
multiplication and Moser type estimates.

Section \ref{sec:linear_asymp} is devoted to elliptic linear systems on
asymptotically flat Riemannian manifolds. In the first subsection we establish
\textit{ a priori} estimates for second order elliptic operators with
coefficients in the $W_{s,\delta}^p$ spaces in $\setR^n$  and  show
that these systems are
semi--Fredholm operators. This property plays
an essential role in the study of
the non--linear equations. The definition of asymptotically flat
manifolds of the class
$W_{s,\delta}^p$ is done in subsection \ref{sec:Asymptotically flat manifold}. 
We also define there the norms and the 
bilinear forms on a Reimannian manifold.
In subsection \ref{sec:weak} we study weak solutions that meet very
low regularity requirements. This  demands special
attention to the
extension of a $L^2$-bilinear form  to the bilinear form acting on
$W_{s,\delta}^p$ and its dual. We then define weak solutions on the manifolds
and derive a weak maximum principle for all $p\in (1,\infty)$.

In Section \ref{sec:semi-linear}  we prove the existence and uniqueness theorem
of a semi--linear
equation, where the linear part is the Laplace--Beltrami operator of an
asymptotically flat Riemannian manifold. The method of sub and super solution
is the common method for these types of non--linearity, however, we shall
implement  Cantor's homotopy argument \cite{cantor79} in the weighted Besov 
spaces.

In Section \ref{sec:brill-cantor-theorem} 
we discuss  the Brill--Cantor criterion.
We show that for $s\in
(\frac{n}{p},\infty)\cap [1,\infty)$, $\delta\in
(-\frac{n}{p},-2-n+\frac{n}{p})$ and all $p\in(1,\infty)$ condition 
(\ref{eq:brill-cantor}) is necessary and sufficient for
the existence of a metric that belongs to $W_{s,\delta}^p$ and
has zero scalar curvature. In Section
\ref{sec:appl-constr-equat}  we consider
the construction of initial data
for the Einstein--Euler system. In this system the equations for the
gravitational fields are coupled to a perfect fluid and certain relations
between the source terms of the
constraint equations  and the fluid variables must be fulfilled.
In \cite{BK3} the authors discussed this problem in
detail  using
the weighted Hilbert spaces $W_{s,\delta}^2$. Here we extend these
results to the $W_{s,\delta}^p$ spaces.

Finally, in the Appendix we discuss the extension of several properties 
$W_{s,\delta}^p$ spaces in $\setR^n$ to the $W_{s,\delta}^p$ spaces on
a Riemannian manifold. 
The norm on a Riemannian manifold is defined by means of a collection of charts 
and partition of unity. Though the extension to a Riemannian manifold seems to 
be an obvious matter,  sometimes it requires a certain attention and 
watchfulness. 

\textbf{Some notations:} For $p\in(1,\infty)$, $p'$ will stand for the
dual index to $p$, 
that is $\frac{1}{p}+\frac{1}{p'}=1$. The scaling of a distribution $u$ with a
positive number $\varepsilon$ is denoted by $u_\varepsilon$. A Riemannian
manifold is denoted by $\M$ and  $g=g_{ab}$ is a metric on $\M$, $\nabla u$ is
the covariant derivative and $|\nabla u|_g^2=g^{ab}\partial_a
u\partial_b u$,
where $g^{ab}$ is the inverse matrix of $g_{ab}$. Latin indexes $a,b$ take the
values $1,\ldots,n$ and the dimension $n$ is greater or equal to two throughout
this paper. We will use the
notation $A\lesssim B$ to denote an inequality $A\leq CB$ where the positive
constant $C$ does not depend on the parameters in question.

\subsection*{Acknowledgments}
We would thanks to the anonymous  referee for his/her constructive comments, 
which definitively helped to improve the manuscript.

\section{The weighted Besov spaces}
\label{sec:Besov}
\subsection{The construction of the Spaces $W_{s,\delta}^p$}
\label{sec:constr-spac-h_s}

In this subsection we sketch the construction of the weighted Besov
spaces.
We start by fixing the notations and recalling the
definition of the Besov spaces $B_{p,p}^s$
\cite{berg_lofstrom_76,
  triebel83}.
Let $\mathcal{S}$
denote the Schwartz class of rapidly decreasing functions in $\setR^n$ and
$\mathcal{S}'$  its dual. Let
$\{\phi_j\}_{j=0}^\infty\subset C_0^\infty(\setR^n)$
be a  dyadic partition of
unity of $\setR^n$ such that ${\rm supp}(\phi_0)\subset \{|\xi|\leq
2\}$, ${\rm supp}(\phi_j)\subset \{2^{j-1}\leq|\xi|\leq 2^{j+1}\}$ for 
$j\in\mathbb{N}$,  all the
function $\phi_j$ are nonnegative and
 $\sum_{j=0}^\infty \phi_j(\xi)=1$.
 Let
$\mathcal{F}(u)$ be the Fourier transform of a distribution 
$u,$ $s\in\setR$ and $1\leq p<\infty$, then
\begin{equation}
\label{eq:norm2}
 W^p_s:=B_{p,p}^s=\left\{u\in \mathcal{S}':
\|u\|_{W_s^p}:=\left(\sum_{j=0}^\infty
2^{jsp}\left\|\mathcal{F}^{-1}(\phi_j \mathcal{F}(u))\right\|_{L^p}^p
\right)^{1/p}<\infty\right\}.
\end{equation}

For $1<p<\infty$, the dual space $\left(W_s^p\right)'$ is isomorphic to
$W_{-s}^{p'}$, where 
$1/p+1/p'=1$ (see e.g. \cite[Corollary 6.2.8]{berg_lofstrom_76}).   Let $u\in 
W_s^p$ and 
$\varphi\in W_{-s}^{p'}$, then
\begin{equation}
\label{eq:3}
\begin{split}
 \langle u,\varphi\rangle &=\sum_{j=0}^\infty\sum_{k=j-2}^{k=j+2}
 \int 
\left[\mathcal{F}^{-1}(\phi_j 
\mathcal{F}(u))(x)\right]\left[\mathcal{F}^{-1}(\phi_k 
\mathcal{F}(\varphi))(x)\right]dx  \\ &:=
\sum_{j=0}^\infty\sum_{k=j-2}^{j+2}\left(\mathcal{F}^{-1}(\phi_j 
\mathcal{F}(u)),\mathcal{F}^{-1}(\phi_k 
\mathcal{F}( \varphi))\right)_{L^2(\setR^n)},
\end{split}
\end{equation} 
is a bilinear form on $W_s^p\times W_{-s}^{p'}$.
Here and throughout the paper $(\cdot,\cdot)_{L^2}$ denote the $L^2$ bilinear 
form, also whenever $k<0$,  sums as in (\ref{eq:3}) start from zero. 
For the proof of this formula see \cite[Proposition 2.76]{Bahouri_2011}.  
\begin{prop}
\label{prop:dual}
 If  $s>0$ $u\in W_s^p$ and $\varphi\in \mathcal{S}$, then 
\begin{equation}
\label{eq:form:1}
 \langle u,\varphi\rangle=(u,\varphi)_{L^2(\setR^2)}.
\end{equation} 

\end{prop}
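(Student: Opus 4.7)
My plan is to rewrite the $L^2$ integral $\int u\,\varphi\,dx$ in terms of Littlewood--Paley pieces $u_j:=\mathcal{F}^{-1}(\phi_j\mathcal{F}(u))$ and $\varphi_k:=\mathcal{F}^{-1}(\phi_k\mathcal{F}(\varphi))$, and to exploit the almost orthogonality built into the dyadic partition of unity so that only the diagonal band $|j-k|\le 2$ survives. This band is precisely the one appearing in (\ref{eq:3}). First I would check that the right-hand side of (\ref{eq:form:1}) is well defined: for $s>0$ the classical Besov embedding $W_s^p=B_{p,p}^s\hookrightarrow L^p$ holds, since by H\"older applied to $\|u\|_{L^p}\leq\sum_j\|u_j\|_{L^p}=\sum_j 2^{-js}\bigl(2^{js}\|u_j\|_{L^p}\bigr)$ the geometric factor $\sum_j 2^{-jsp'}$ converges exactly when $s>0$, yielding $\|u\|_{L^p}\lesssim \|u\|_{W_s^p}$. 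Combined with $\varphi\in\mathcal{S}\subset L^{p'}$, this makes $(u,\varphi)_{L^2(\setR^n)}$ an absolutely convergent integral.

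\textbf{Decomposition and absolute convergence.} Since $\sum_j\phi_j\equiv 1$, one has $u=\sum_j u_j$ in $L^p$ and $\varphi=\sum_k\varphi_k$ in $L^{p'}$; for the Schwartz function $\varphi$, the norms $\|\varphi_k\|_{L^{p'}}$ decay faster than any power of $2^{-k}$, while the H\"older argument above gives $\sum_j\|u_j\|_{L^p}<\infty$. Bilinearity and continuity of the $L^2$ pairing on $L^p\times L^{p'}$, together with absolute convergence of the double series $\sum_{j,k}\|u_j\|_{L^p}\|\varphi_k\|_{L^{p'}}$, then justify
\begin{equation*}
 (u,\varphi)_{L^2(\setR^n)}=\sum_{j=0}^\infty\sum_{k=0}^\infty(u_j,\varphi_k)_{L^2(\setR^n)}.
\end{equation*}

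\textbf{Spectral orthogonality and main obstacle.} The core step is to show $(u_j,\varphi_k)_{L^2}=0$ whenever $|j-k|\ge 3$. Unfolding Plancherel for the bilinear pairing,
\[
 (u_j,\varphi_k)_{L^2} \;=\; (2\pi)^{-n}\int \phi_j(\xi)\,\phi_k(-\xi)\,\mathcal{F}(u)(\xi)\,\mathcal{F}(\varphi)(-\xi)\,d\xi,
\]
and since $\supp\phi_j\subset\{2^{j-1}\le|\xi|\le 2^{j+1}\}$ while $\supp\phi_k(-\,\cdot\,)$ sits in the symmetric annulus $\{2^{k-1}\le|\xi|\le 2^{k+1}\}$, these supports become disjoint once $|j-k|\ge 3$, killing the integrand. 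Hence only the terms with $k\in\{j-2,\ldots,j+2\}$ contribute, reproducing (\ref{eq:3}) term by term and delivering (\ref{eq:form:1}). The most delicate point is the interchange of summation and integration: Besov convergence $\sum_j u_j=u$ in $L^p$ requires $s>0$ in an essential way, and it is this hypothesis (together with $\varphi\in\mathcal{S}$) that produces the absolute convergence needed to rearrange the double series.
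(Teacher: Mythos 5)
Your argument is correct and rests on the same two ingredients as the paper's proof: the Parseval/Fourier-support computation that kills every pairing $(u_j,\varphi_k)_{L^2}$ outside the band $|j-k|\le 2$ (recovering exactly the sum in (\ref{eq:3})), and the embedding $W_s^p\hookrightarrow L^p$ for $s>0$, which makes the $L^2$ pairing with $\varphi\in\mathcal{S}\subset L^{p'}$ finite. The only organizational difference is that the paper verifies the identity for smooth $u$ and then extends to general $u\in W_s^p$ by density of $C_0^\infty$, whereas you justify the rearrangement directly via absolute convergence of the double series; just read your displayed Plancherel formula distributionally when $p>2$ (i.e. as the pairing of the compactly supported distribution $\phi_j\mathcal{F}(u)$ with the test function $\phi_k(-\cdot)\mathcal{F}(\varphi)(-\cdot)$), since $\mathcal{F}(u)$ need not be a function, and the same disjoint-support argument then applies verbatim.
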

 \begin{proof}
Assuming that $u$ is smooth, then we have by  Parseval's formula that
 \begin{equation*}
  \begin{split}  
& (u,\varphi)_{L^2(\setR^n)}=c_n\left(\mathcal{F}(u),\overline{\mathcal{F}
(\varphi)}\right)_ { 
L^2(\setR^n)}=c_n\left(\sum_{j=0}^\infty 
\phi_j\mathcal{F}(u),\overline{\sum_{k=0}^\infty 
\phi_k\mathcal{F} (\varphi) } \right)_ {
L^2(\setR^n)} \\ = &c_n\sum_{j=0}^\infty\sum_{k=j-2}^{j+2}\left(
\phi_j\mathcal{F}(u),\overline{ 
\phi_k\mathcal{F} (\varphi) } \right)_ {
L^2(\setR^n)}=\sum_{j=0}^\infty\sum_{k=j-2}^{j+2}\left(\mathcal{F}^{-1}
(\phi_j 
\mathcal{F}(u)),\mathcal{F}^{-1}(\phi_k 
\mathcal{F}( \varphi))\right)_{L^2(\setR^n)} \\
= &\langle u,\varphi\rangle.
  \end{split}
 \end{equation*}
Now if  $u\in W_{s}^p$, then $u\in L^p$ since $s>0$. So take a sequence 
$\{u_k\}\subset
C_0^\infty(\setR^n)$ such that $u_k$ tends to $u$ in $W_s^p$.  Then 
$|(u_k,\varphi)_{L^2(\setR^n)}|\leq \|u_k\|_{L^p}\|\varphi\|_{L^{p'}}\leq  
\|u_k\|_{W^p_s}\|\varphi\|_{L^{p'}}$. Thus (\ref{eq:form:1}) follows.
\end{proof}

We turn now to  the construction of the weighted-$W_s^p$ space. We also use a
dyadic partition  of unity,  which is denoted by
$\{\psi_j\}_{j=0}^\infty$, and 
is such that the support  of $\psi_j$ is contained in the dyadic
shell $\{x: 2^{j-2}\leq |x|\leq 2^{j+1}\}$, $\psi_j(x)=1$ on $\{x: 2^{j-1}\leq
|x| \leq 2^{j}\}$ for $j=1,2,...$, while $\psi_0$ has a support in the ball
$\{x: |x|\leq 2\}$ and $\psi_0(x)=1$ on $\{x: |x|\leq 1\}$. In addition we
require that $\{\psi_j\}_{j=0}^\infty\subset C_0^\infty(\setR^n)$ and satisfies
the inequalities
\begin{equation}
  \label{eq:const:4}
 |\partial^\alpha  \psi_j(x)|\leq  C_\alpha  2^{-|\alpha|j},
\end{equation}
where the constant $C_\alpha$ does not depend on $j$. For a positive number
$\varepsilon$ we denote the scaling $u(\varepsilon x)$ by $u_\varepsilon(x)$.
\begin{defn}[Weighted Besov  spaces $W_{s,\delta}^p$]
\label{def:spaces}
 Let $s,\delta\in\setR$ and $p\in[1,\infty)$, the
$W_{s,\delta}^p(\setR^n)$-space  is
the set of all tempered distributions $u$ such that the norm
\begin{equation}
\label{eq:norm1}
 \|u\|_{W_{s,\delta}^p(\setR^n)}^p:=\sum_{j=0}^\infty 2^{(\delta+\frac{n}{p})pj}
\left\|\left(\psi_j u\right)_{(2^j)}\right\|_{W_s^p}^p.
\end{equation}   
is finite.
\end{defn}
 The $W_{s,\delta}^p$-norm of distributions in an
open set $\Omega\subset\setR^n$ is given by
\begin{equation*}
\label{eq:norm4}
 \left\|u \right\|_{W_{s,\delta}^p(\Omega)}=\inf\limits_{f{_{\mid_\Omega}}=u}
\left\|f \right\|_{W_{s,\delta}^p(\setR^n)}.
\end{equation*}

The following basic properties were established in Triebel 
\cite{triebel76:_spaces_kudrj2, triebel76:_spaces_kudrj}.
\begin{thm}[Triebel, Basic properties]
\label{thm1}
Let $s,\delta\in \setR$ and $p\in(1,\infty)$.
 \label{thm:Triebel}
\begin{enumerate}
 \item[{\rm (a)}] The space $W_{s,\delta}^p(\setR^n)$ is a Banach space and 
 different  choices  of  the dyadic resolution $\{\psi_j\}$ which satisfies
(\ref{eq:const:4}) result in equivalent norms. 

\item[{\rm (b)}]  $C_0^\infty(\setR^n)$ is a dense subset in
$W_{s,\delta}^p(\setR^n)$.
\item[{\rm (c)}]  The dual space of  $W_{s,\delta}^p(\setR^n)$ is
$W_{-s,-\delta}^{p'}(\setR^n)$, where $p'=\frac{p}{(p-1)}$.

\item[{\rm (d)}] 
\label{thm:interpolation}
Interpolation (real): Let $0<\theta<1$, $s=\theta
s_0+(1-\theta)s_1$, $\delta=\theta \delta_0+(1-\theta)\delta_1$ and $1/p=\theta
/p_0+(1-\theta)/p_1$, then 
\begin{equation*}
 \left(W_{s_1,\delta_1}^{p_1}(\setR^n),
W_{s_2,\delta_2}^{p_2}(\setR^n)\right)_{\theta,p}=W_{s,\delta}^p(\setR^n).
\end{equation*} 
\end{enumerate}
\end{thm}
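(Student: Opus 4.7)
The plan is to reduce all four properties to the corresponding facts for the ordinary Besov space $W_s^p$ combined with standard properties of $\ell^p$-valued sequence spaces. The crucial structural observation is that (\ref{eq:norm1}) realizes $W_{s,\delta}^p$ as an almost-isometric image of a weighted $\ell^p$-space with values in $W_s^p$ via the localization--dilation map $u\mapsto\{(\psi_j u)_{(2^j)}\}_j$. Once this is in place each item becomes routine, and the detailed verifications are Triebel's \cite{triebel76:_spaces_kudrj2, triebel76:_spaces_kudrj}.

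For (a), completeness is a diagonal argument: a Cauchy sequence $\{u_k\}$ in $W_{s,\delta}^p$ produces, for each fixed $j$, a Cauchy sequence $\{(\psi_j u_k)_{(2^j)}\}$ in the Banach space $W_s^p$; reversing the dilation gives a limit for each $\psi_j u$, and the summability of the weights $2^{(\delta+n/p)pj}$ lets me glue these into a single tempered distribution $u$ with the correct norm. For equivalence under a second dyadic resolution $\{\tilde\psi_j\}$ obeying (\ref{eq:const:4}), the supports of $\psi_j$ and $\tilde\psi_k$ intersect only when $|j-k|$ is bounded, so each block in one norm is controlled by finitely many blocks in the other; the technical input is that multiplication by a function satisfying (\ref{eq:const:4}) is bounded on $W_s^p$ after the standard rescaling, with operator norm uniform in $j$.

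For (b), density, I truncate the dyadic series: the tail $\sum_{j\geq N}$ is made arbitrarily small by summability of the norm, and each finite partial sum is approximated in $W_{s,\delta}^p$ by applying density of $C_0^\infty(\setR^n)$ in $W_s^p$ to each block after undoing the dilation. For (c), duality, I pair $u\in W_{s,\delta}^p$ with $v\in W_{-s,-\delta}^{p'}$ via the almost-diagonal $L^2$-scheme of (\ref{eq:3}), apply the Besov duality $(W_s^p)'=W_{-s}^{p'}$ on each block, and close with H\"older on the outer sum; the weights $2^{(\delta+n/p)j}$ and $2^{(-\delta+n/p')j}$ match correctly once the Jacobian factors $2^{nj}$ from the rescalings are reinserted. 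Surjectivity onto the full dual follows from (b).

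For (d), real interpolation, I extend the localization--dilation map to a bounded retraction from each $W_{s_i,\delta_i}^{p_i}$ onto the weighted vector-valued sequence space with norm $\bigl(\sum_j 2^{(\delta_i+n/p_i)p_i j}\|f_j\|_{W_{s_i}^{p_i}}^{p_i}\bigr)^{1/p_i}$, with co-retraction $\{f_j\}\mapsto \sum_j \tilde\psi_j (f_j)_{(2^{-j})}$ using an auxiliary partition $\tilde\psi_j\equiv 1$ on $\supp\psi_j$. Since real interpolation commutes with retracts and with the weighted $\ell^p(X)$ construction, the statement reduces to the Bergh--L\"ofstr\"om identity $(W_{s_0}^{p_0},W_{s_1}^{p_1})_{\theta,p}=W_s^p$ for ordinary Besov spaces. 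The main obstacle I anticipate is bookkeeping in this retraction argument, namely verifying that the retraction and co-retraction are simultaneously bounded between both endpoint pairs with constants that can be tracked through the $K$-functional; once that is pinned down, (d) is automatic.
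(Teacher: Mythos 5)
The paper does not prove this theorem at all: it is quoted with attribution to Triebel \cite{triebel76:_spaces_kudrj2, triebel76:_spaces_kudrj}, so there is no in-paper argument to compare against. Your localization--dilation/retract outline is essentially Triebel's own method (blockwise reduction to $W_s^p$, uniform multiplier bounds for functions obeying (\ref{eq:const:4}), matching of the weights through the Jacobian factor $2^{nj}=2^{(\delta+n/p)j}2^{(-\delta+n/p')j}$ in (\ref{eq:4}), and interpolation via the weighted $\ell^p(W_s^p)$ retract), and the individual steps are sound as a sketch. The one claim that is too quick is in (c): surjectivity of $W_{-s,-\delta}^{p'}\to\left(W_{s,\delta}^p\right)'$ does not follow from density alone --- density only shows that a functional is determined by the pairing; to produce a representing element you need the same retraction/co-retraction structure you invoke for (d) (extend the functional by Hahn--Banach to the weighted sequence space $\ell^p(W_s^p)$, use its duality with $\ell^{p'}(W_{-s}^{p'})$, and pull back by the co-retraction), so you should route (c) through that machinery rather than through (b).
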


For the definition of the   bilinear form on $W_{s,\delta}^p(\setR^n)\times
W_{-s,-\delta}^{p'}(\setR^n)$ we choose  a dyadic resolution $\{\psi_j\}$ such
that $\sum_{j=0}^\infty \psi_j(x)=1$ and set 
\begin{equation}
 \label{eq:4}
\langle u,\varphi \rangle_W=\sum_{j=0}^\infty\sum_{k=j-2}^{j+2}2^{nj}
\langle (\psi_j u)_{(2^j)}, (\psi_k\varphi)_{(2^j)}\rangle,
\end{equation} 
$\langle \cdot,\cdot\rangle$ denotes the form appearing on the left
hand side and is
of course given by (\ref{eq:3}).  It satisfies the inequality 
\begin{equation}
\label{eq:norm:8}
 |\langle u,\varphi \rangle_W|\leq C
\|u\|_{W_{s,\delta}^p(\setR^n)}\|\varphi\|_{W_{-s,-\delta}^{p'}(\setR^n)} 
\end{equation}
(see the proof of Theorem 2 in \cite{triebel76:_spaces_kudrj}).
In a similar manner to Proposition \ref{prop:dual}, we have that:
\begin{prop}
 \label{prop:dual2}
If $s>0$, $u\in W_{s,\delta}^p(\setR^n)$  and $\varphi\in \mathcal{S}$, then
\begin{equation}
\label{eq:dual2}
 (u,\varphi)_{L^2(\setR^n)}=\langle u, \varphi\rangle_W.
\end{equation}
\end{prop}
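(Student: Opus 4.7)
The strategy mirrors Proposition~\ref{prop:dual}: first verify the identity for $u\in C_0^\infty(\setR^n)$ by direct computation, then extend to general $u\in W_{s,\delta}^p(\setR^n)$ by density (Theorem~\ref{thm1}(b)) together with the continuity estimate (\ref{eq:norm:8}).

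For $u\in C_0^\infty(\setR^n)$ only finitely many $\psi_j u$ are nonzero, so no convergence issue arises. For each pair $(j,k)$ appearing in (\ref{eq:4}), both $(\psi_j u)_{(2^j)}$ and $(\psi_k\varphi)_{(2^j)}$ are Schwartz, and since $s>0$ Proposition~\ref{prop:dual} identifies $\langle (\psi_j u)_{(2^j)},(\psi_k\varphi)_{(2^j)}\rangle$ with their $L^2$ pairing. A change of variables $y=2^j x$ then turns each summand $2^{nj}\langle\cdot,\cdot\rangle$ of (\ref{eq:4}) into $\int\psi_j(y)\psi_k(y)u(y)\varphi(y)\,dy$. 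From the stated dyadic supports, $\psi_j\psi_k\equiv 0$ whenever $|j-k|\geq 3$, so
\begin{equation*}
\sum_{|j-k|\leq 2}\psi_j(x)\psi_k(x)\;=\;\Bigl(\sum_{j}\psi_j(x)\Bigr)\Bigl(\sum_{k}\psi_k(x)\Bigr)\;\equiv\;1
\end{equation*}
pointwise, and bringing this finite pointwise sum inside the integral yields $\langle u,\varphi\rangle_W = \int u\varphi\,dy = (u,\varphi)_{L^2(\setR^n)}$.

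For a general $u\in W_{s,\delta}^p(\setR^n)$, pick $u_n\in C_0^\infty(\setR^n)$ with $u_n\to u$ in $W_{s,\delta}^p$. Then $\langle u_n,\varphi\rangle_W\to\langle u,\varphi\rangle_W$ by (\ref{eq:norm:8}), since $\varphi\in\mathcal{S}\subset W_{-s,-\delta}^{p'}$. For the $L^2$ side, the standard embedding $W_s^p\hookrightarrow L^p$ (valid for $s>0$) applied term-by-term in (\ref{eq:norm1}) and summed gives the weighted-$L^p$ control
\begin{equation*}
\int (1+|x|)^{\delta p}|v(x)|^p\,dx\;\lesssim\;\|v\|_{W_{s,\delta}^p}^p,\qquad v\in W_{s,\delta}^p(\setR^n),
\end{equation*}
so by H\"older's inequality
\begin{equation*}
|(u_n-u,\varphi)_{L^2(\setR^n)}|\;\leq\;\|(1+|x|)^{\delta}(u_n-u)\|_{L^p(\setR^n)}\,\|(1+|x|)^{-\delta}\varphi\|_{L^{p'}(\setR^n)}\;\longrightarrow\;0,
\end{equation*}
the second factor being finite by the rapid decay of $\varphi$. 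Passing to the limit in the smooth-$u$ identity completes the proof. The only mildly delicate ingredient is the partition-of-unity collapse in the smooth case, which reduces to the pointwise identity $\sum_j\psi_j\equiv 1$.
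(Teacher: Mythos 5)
Your proposal is correct and takes essentially the same route as the paper, whose proof is just the remark that one repeats the argument of Proposition \ref{prop:dual} with the spatial dyadic resolution $\{\psi_j\}$ in place of $\{\phi_j\}$: verify the identity for smooth compactly supported $u$ by rescaling each term of (\ref{eq:4}), applying Proposition \ref{prop:dual} termwise and collapsing the partition of unity via $\sum_j\psi_j\equiv 1$, then pass to general $u$ by density, using (\ref{eq:norm:8}) on the form side and a weighted H\"older bound on the $L^2$ side. Your intermediate claim $\|(1+|x|)^{\delta}v\|_{L^p}\lesssim\|v\|_{W_{s,\delta}^p}$ for $s>0$ is valid (it follows from $W_s^p\hookrightarrow L^p$ together with the finite overlap of the $\psi_j$, or Proposition \ref{prop:eqiv}), so the limiting step goes through as you describe.
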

\begin{proof}
 We use Proposition  \ref{prop:dual} and also the same idea as in its proof,
but here  the dyadic resolution  $\{\psi_j\}$ replaces $\{\phi_j\}$.
\end{proof}
\begin{rem}
\label{rem:3}
Let $f\in W_{s,\delta}^p(\setR^n)$, then
it follows from  (c)  above that 
\begin{equation}
\label{eq:dual}
 \|f\|_{W_{s,\delta}^{p}(\setR^n)}=\sup\{|\langle f,\varphi\rangle_W|:
\|\varphi\|_{W_{-s,-\delta}^{p'}(\setR^n)}\leq 1, \varphi\in
C_0^\infty(\setR^n)\},
\end{equation}
and in particular, by (c) of Theorem \ref{thm1} and Proposition
\ref{prop:dual2},
if $f\geq 0$, then
\begin{equation}
\label{eq:dual3}
 \|f\|_{W_{s,\delta}^{p}(\setR^n)}=\sup\{\langle f,\varphi\rangle_W:
\varphi\geq0, 
\|\varphi\|_{W_{-s,-\delta}^{p'}(\setR^n)}\leq 1, \varphi\in
C_0^\infty(\setR^n)\}.
\end{equation} 
\end{rem}

For $s>0$ the Besov norm (\ref{eq:norm2}) is equivalent 
to the norm
of the Sobolev--Sobolevskij spaces (see e.g.
\cite[Ch.
6]{berg_lofstrom_76}, \cite[\S35]{Tartar} or \cite{triebel83}). Their
norm is defined as follows.  Let $s=m+\lambda$, where $m$
is a nonnegative integer and
$0<\lambda<1$, then
\begin{equation*}    
\label{eq:5}
\displaystyle 
\|u\|_{s,p}^p=\left\{ \begin{array}{ll}\sum_{|\alpha|\leq m}\|\partial^\alpha
u\|_{L^p}^p, & s=m\\ 
\displaystyle \sum_{|\alpha|\leq m}\|\partial^\alpha u\|_{L^p}^p +       
\sum_{|\alpha|= m} \iint 
\frac{ |\partial^\alpha
u(x)-\partial^\alpha u(y)|^p}{ |x-y|^{n+\lambda p}}dxdy, &s=m+\lambda       
\end{array}\right..
\end{equation*}

Thus a natural extension of the  spaces defined  by the norm
(\ref{eq:intro:5}) to the spaces of fractional order, when 
$s>0$, is
\begin{equation}
\label{eq:norm3}
\displaystyle
\|u\|_{s,p,\delta}^p=\left\{ 
\begin{array}{ll}\ 
\displaystyle 
\sum_{|\alpha|\leq
m}\| (1+|x|)^{\delta+|\alpha|} \partial^\alpha
u\|_{L^p}^p, & s=m\\ 
\begin{array}{l}
\displaystyle 
\sum_{|\alpha|\leq m}\| (1+|x|)^{\delta+|\alpha|} \partial^\alpha
u\|_{L^p}^p  + \\      
\displaystyle
\sum_{|\alpha|= m}\iint \frac{ |(1+|x|)^{\delta+m+\lambda} \partial^\alpha
u(x)-(1+ |y|)^{\delta+m+\lambda}\partial^\alpha u(y)|^p}{
|x-y|^{n+\lambda p}}dxdy,
\end{array} &s=m+\lambda           
   \end{array}\right..
\end{equation}

In order to show the equivalence between the norms 
(\ref{eq:norm1}) and (\ref{eq:norm3}) we introduce the  homogeneous
norm, that is,  
\begin{equation*}
\displaystyle
 \|u\|_{s,p,hom}^p=\left\{ \begin{array}{ll}\sum_{|\alpha|=
m}\|\partial^\alpha
u\|_{L^p}^p, & s=m\\
\displaystyle
\sum_{|\alpha|= m}\iint \dfrac{ |\partial^\alpha
u(x)-\partial^\alpha u(y)|^p}{ |x-y|^{n+\lambda p}}dxdy, &s=m+\lambda
   \end{array}\right..
\end{equation*}
Taibleson \cite[Theorem 10]{taibleson64:_lipsc_euclid_space}, proved that the
norm $\|u\|_{s,p}$ is equivalent to
$\left(\|u\|_{L^p}^p+\|u\|_{s,p,hom}^p\right)^{1/p}$ (see also \cite[Ch.
V, \S4-5]{stein70:_singul_integ_differ_proper_funct}), and by using this
equivalence 
Triebel \protect \cite{triebel76:_spaces_kudrj2}
proved that
\begin{equation}
\label{eq:norm5}
c_0 \|u\|_{s,p,\delta}^p \leq\sum_{j=0}^{\infty} 2^{\delta p j}\|\psi_j
u\|_{L^p}^p
+2^{(\delta+ s) p j}\|\psi_j u\|_{s,p,hom}^p\leq c_1 \|u\|_{s,p,\delta}^p.
\end{equation}
Moreover, he showed that the constants in  the above equivalence
depend only on $s$, $\delta$, $p$,
the dimension and the constants $C_\alpha$ of inequalities (\ref{eq:const:4}). 
   
Taking into account the homogeneous properties, that is, $\|(\psi_j
u)_{2^j}\|_{L^p}^p=2^{-jn}\|\psi_j u\|_{L^p}^p$ and
$\|(\psi_ju)_{2^j}\|_{s,p,hom}^p=2^{-j(n-sp)}\|\psi_ju\|_{s,p,hom}^p$, and
combining   them  with the  equivalence (\ref{eq:norm5}), we obtain 
\begin{equation*}
\begin{split}
 \|u\|_{s,p,\delta}^p  & \sim\sum_{j=0}^{\infty} 2^{(\delta +\frac{n}{p})p
j}\left(\|(\psi_j u)_{2^j}\|_{L^p}^p +\|(\psi_j u)_{2^j}\|_{s,p,hom}^p\right)
\\ &\sim
 \sum_{j=0}^{\infty} 2^{(\delta+\frac{n}{p})p
j}\|(\psi_j u)_{2^j}\|_{s,p}^p
\sim
 \sum_{j=0}^{\infty} 2^{(\delta+\frac{n}{p})p
j}\|(\psi_j u)_{2^j}\|_{> W_{s}^p}^p=
\|u\|_{W_{s,\delta}^p(\setR^n)}^p.
\end{split}
\end{equation*}
This proves the following theorem of Triebel \cite{triebel76:_spaces_kudrj2}.
\begin{thm}[Triebel, Equivalence of norms]
\label{thm:2}
Let $s>0$, $1\leq p<\infty$ and
$-\infty<\delta<\infty$. Then the norms
(\ref{eq:norm1}) and (\ref{eq:norm3}) are
equivalent. In particular, when $s$ is a non--negative integer, then the norm
(\ref{eq:norm1}) is equivalent  to the norm  (\ref{eq:intro:5}).
\end{thm}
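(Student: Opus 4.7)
The plan is to reduce the equivalence of the weighted norms (\ref{eq:norm1}) and (\ref{eq:norm3}) to two already-available ingredients: a local, unweighted equivalence of the Sobolev--Sobolevskij norm with a homogeneous piece plus an $L^p$ piece (Taibleson), and a weighted dyadic decomposition of the norm $\|u\|_{s,p,\delta}$ itself (Triebel's (\ref{eq:norm5})). Everything else is scaling.

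First I would recall Taibleson's theorem: for $s>0$ and $1\le p<\infty$, $\|v\|_{s,p}^p$ is equivalent to $\|v\|_{L^p}^p+\|v\|_{s,p,\mathrm{hom}}^p$, with constants depending only on $s$, $p$ and $n$. Applying this to each localized piece $(\psi_j u)_{(2^j)}$, and using the fact from Section~\ref{sec:constr-spac-h_s} (see (\ref{eq:norm2})) that $\|v\|_{W_s^p}\sim\|v\|_{s,p}$ for $s>0$ with constants depending only on $s,p,n$, I get
\begin{equation*}
\|(\psi_j u)_{(2^j)}\|_{W_s^p}^p \sim \|(\psi_j u)_{(2^j)}\|_{L^p}^p+\|(\psi_j u)_{(2^j)}\|_{s,p,\mathrm{hom}}^p,
\end{equation*}
uniformly in $j$.

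Next I would carry out the homogeneity computation: the change of variables $y=2^j x$ in the $L^p$ integral gives
\begin{equation*}
\|(\psi_j u)_{(2^j)}\|_{L^p}^p = 2^{-jn}\|\psi_j u\|_{L^p}^p,
\end{equation*}
and the same change of variables in the Gagliardo double integral (taking into account that a derivative of order $m$ of $(\psi_j u)_{(2^j)}$ scales by $2^{jm}$, and that the kernel $|x-y|^{-n-\lambda p}$ contributes $2^{j(n+\lambda p)}$) gives
\begin{equation*}
\|(\psi_j u)_{(2^j)}\|_{s,p,\mathrm{hom}}^p = 2^{-j(n-sp)}\|\psi_j u\|_{s,p,\mathrm{hom}}^p.
\end{equation*}
Multiplying by $2^{(\delta+n/p)pj}=2^{(\delta p+n)j}$ converts these into $2^{\delta pj}\|\psi_j u\|_{L^p}^p$ and $2^{(\delta+s)pj}\|\psi_j u\|_{s,p,\mathrm{hom}}^p$ respectively, which is exactly the pair of quantities appearing in the weighted dyadic sum.

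At this point I would invoke Triebel's decomposition (\ref{eq:norm5}), which says
\begin{equation*}
\|u\|_{s,p,\delta}^p \sim \sum_{j=0}^{\infty}\Bigl(2^{\delta pj}\|\psi_j u\|_{L^p}^p+2^{(\delta+s)pj}\|\psi_j u\|_{s,p,\mathrm{hom}}^p\Bigr),
\end{equation*}
with constants depending only on $s,\delta,p,n$ and the partition constants $C_\alpha$. Combining the three equivalences (Taibleson inside each shell, scaling, and (\ref{eq:norm5})) yields
\begin{equation*}
\|u\|_{s,p,\delta}^p \sim \sum_{j=0}^\infty 2^{(\delta+n/p)pj}\|(\psi_j u)_{(2^j)}\|_{W_s^p}^p = \|u\|_{W_{s,\delta}^p(\mathbb{R}^n)}^p,
\end{equation*}
which is the claimed equivalence. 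Specializing to $s=m\in\mathbb{N}_0$ (where the Gagliardo piece is absent and the sums of weighted $L^p$ norms reorganize, by a standard Hardy-type argument on the dyadic shells, into the norm (\ref{eq:intro:5})) gives the last statement.

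The only nontrivial step is the scaling of the homogeneous Gagliardo seminorm, which must produce the precise exponent $sp-n$; since $\lambda$ and $m$ combine as $s=m+\lambda$, this is a short but careful bookkeeping of the Jacobians and of the $2^{jm}$ factor coming from the chain rule. Everything else either cites Taibleson/Triebel or is a routine rearrangement, so the estimate holds uniformly in $j$ and the final series-equivalence follows termwise.
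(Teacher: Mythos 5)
Your argument is correct and follows essentially the same route as the paper: Taibleson's equivalence applied shell-wise, the scaling identities $\|(\psi_j u)_{2^j}\|_{L^p}^p=2^{-jn}\|\psi_j u\|_{L^p}^p$ and $\|(\psi_j u)_{2^j}\|_{s,p,\mathrm{hom}}^p=2^{-j(n-sp)}\|\psi_j u\|_{s,p,\mathrm{hom}}^p$, and Triebel's dyadic decomposition (\ref{eq:norm5}), combined termwise exactly as in the text. The only superfluous point is your appeal to a ``Hardy-type argument'' for the integer case: when $s=m$ the norm (\ref{eq:norm3}) is literally the norm (\ref{eq:intro:5}), so that final statement is immediate.
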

\begin{rem}
\label{rem:2.7}
Note that the Besov space  $B_{p,p}^0$ is continuously included in $L^p$
for $p\in [1,2]$,
and  $L^p$  is continuously included in $B_{p,p}^0$ for $p\in
[2,\infty)$ (see  \cite[Theorem 2.41]{Bahouri_2011}). This phenomenon occurs
also  in the weighted spaces. Let $L_\delta^p$ denote the Lebesgue space with
the weight $(1+|x|)^\delta$, then it follows from the dyadic representation of the
norm that $W_{0,\delta}^p$ is continuously included in $L_\delta^p$ for $p\in
[1,2]$,
and  $L_\delta^p$  is continuously included in $W_{0,\delta}^p$ for $p\in
[2,\infty)$.
\end{rem}

\subsection{Some Properties of $W_{s,\delta}^p(\setR^n)$-spaces}
\label{sec:some-properties-h_s}

In this subsection we establish several useful tools for PDEs
in these spaces, including embeddings, pointwise
multiplications, fractional powers and Moser type estimates.
\begin{prop}
 \label{prop:1}
If $u\in W_{s,\delta}^p(\setR^n)$, then
\begin{equation}
\label{eq:norm7}
 \|\partial_i u\|_{W_{s-1,\delta+1}^p(\setR^n)}\leq C
\|u\|_{W_{s,\delta}^p(\setR^n)},
\end{equation}
where the constant $C$ depends on the constant of the equivalence of Theorem
\ref{thm:2}.
\end{prop}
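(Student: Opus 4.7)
My plan is to work directly from the dyadic definition (\ref{eq:norm1}). Writing $\psi_j \partial_i u = \partial_i(\psi_j u) - (\partial_i\psi_j)\,u$ and rescaling by $2^j$, the homogeneity $(\partial_i f)_{(2^j)} = 2^{-j}\partial_i(f_{(2^j)})$ yields
\begin{equation*}
(\psi_j\partial_i u)_{(2^j)} \;=\; 2^{-j}\,\partial_i\bigl[(\psi_j u)_{(2^j)}\bigr] \;-\; 2^{-j}\,\chi_j\cdot u_{(2^j)},
\end{equation*}
where $\chi_j(x):=2^j(\partial_i\psi_j)(2^j x)$ is a family uniformly bounded in $C_0^\infty$ with support in the fixed annulus $\{1/4\le|x|\le 2\}$, as follows from (\ref{eq:const:4}). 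The task then splits into estimating each summand in the $W_{s-1}^p$ norm and summing against the target weight $2^{(\delta+1+n/p)pj}$.

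For the first summand I would invoke the standard Bernstein-type fact that $\partial_i\colon W_s^p\to W_{s-1}^p$ is bounded for every $s\in\setR$, which is immediate from the dyadic Fourier description (\ref{eq:norm2}) since the symbol $i\xi_i$ has modulus at most $2\cdot 2^j$ on $\mathrm{supp}\,\phi_j$. Thus $\|2^{-j}\partial_i[(\psi_j u)_{(2^j)}]\|_{W_{s-1}^p}\lesssim 2^{-j}\|(\psi_j u)_{(2^j)}\|_{W_s^p}$, and after raising to the $p$-th power the weight $2^{(\delta+1+n/p)pj}$ absorbs the extra $2^{-pj}$ exactly, reproducing the summand of $\|u\|_{W_{s,\delta}^p}^p$.

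For the second summand, observe that on the annular support of $\chi_j$ only finitely many of the rescaled cutoffs $\psi_k(2^j\cdot)$ are non-zero and they sum to $1$ there; hence there is an integer $N$ depending only on the partition $\{\psi_j\}$ with $\chi_j u_{(2^j)} = \chi_j\sum_{|k-j|\le N}(\psi_k u)_{(2^j)}$. Combined with the identity $(\psi_k u)_{(2^j)}(x) = (\psi_k u)_{(2^k)}(2^{j-k}x)$, this reduces matters to two operations on $W_{s-1}^p$: pointwise multiplication by the uniformly bounded cutoff $\chi_j$, and dilation by a bounded factor $2^{j-k}$ with $|k-j|\le N$. Both are bounded with norm independent of $j$ and $k$, and together with the trivial embedding $W_s^p\hookrightarrow W_{s-1}^p$ they give $\|\chi_j u_{(2^j)}\|_{W_{s-1}^p}\lesssim \sum_{|k-j|\le N}\|(\psi_k u)_{(2^k)}\|_{W_s^p}$. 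Weighting by $2^{(\delta+n/p)pj}$ and summing over $j$ costs only an $O(N)$ constant since $|k-j|\le N$, giving the required estimate.

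The principal technical nuisance is the bookkeeping in the second summand: one needs that multiplication by a family of cutoffs uniformly bounded in $C_0^\infty$ with common compact support, and dilation by a factor of size $2^{j-k}$ with $|k-j|\le N$, both act boundedly on $W_{s-1}^p$ with operator norms independent of $j$ and $k$. Both facts are classical consequences of the Littlewood--Paley description (\ref{eq:norm2}), but must be stated with the correct uniformity.
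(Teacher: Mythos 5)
Your argument is correct, but it follows a genuinely different route from the paper. The paper never touches the dyadic pieces directly: for $s\geq 1$ it reads the inequality off the explicit weighted Sobolev--Sobolevskij norm (\ref{eq:norm3}) via the equivalence of Theorem \ref{thm:2}, for $s\leq 0$ it dualizes, estimating $|\langle\partial_i u,\varphi\rangle_W|=|\langle u,\partial_i\varphi\rangle_W|$ and recovering the norm from (\ref{eq:dual}), and it closes the remaining window $0<s<1$ by real interpolation, Theorem \ref{thm1}(d). You instead prove the estimate uniformly for all real $s$ straight from Definition \ref{def:spaces}, via the Leibniz decomposition $(\psi_j\partial_i u)_{(2^j)}=2^{-j}\partial_i[(\psi_j u)_{(2^j)}]-2^{-j}\chi_j u_{(2^j)}$, Bernstein-type boundedness of $\partial_i\colon W_s^p\to W_{s-1}^p$, and uniform multiplier and dilation bounds on the commutator term; this is essentially the same ``nearby-shell plus rescaling'' bookkeeping the paper itself deploys later in the proof of Proposition \ref{prop:Moser}. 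What your approach buys is a single self-contained argument with no appeal to duality or interpolation (hence no dependence on parts (c)--(d) of Theorem \ref{thm1}); what the paper's approach buys is brevity given machinery it needs anyway, and no commutator bookkeeping. One small point to tighten: Definition \ref{def:spaces} does not require $\sum_j\psi_j=1$ (the paper imposes this only when defining the form (\ref{eq:4})), so your identity $\chi_j u_{(2^j)}=\chi_j\sum_{|k-j|\le N}(\psi_k u)_{(2^j)}$ needs either a choice of resolution summing to one, which is harmless by the norm equivalence in Theorem \ref{thm1}(a), or the normalization $\Psi_j=\psi_j/\sum_k\psi_k$ used in the paper's proof of Proposition \ref{prop:Moser}; with that remark inserted, your estimates go through as written.
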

\begin{proof}
If $s\geq 1$, then (\ref{eq:norm3}) implies $\|\partial_i
u\|_{s-1,p,\delta+1}
\leq \|u\|_{s,p,\delta}$, so (\ref{eq:norm7}) follows from Theorem
\ref{thm:2} in that case. For $s\leq 0$, 
we have by the previous step
\begin{equation*}
|\langle\partial_i u, \varphi\rangle_W|=|\langle u, \partial_i\varphi\rangle_W| 
\leq
\|u\|_{W_{s,\delta}^p(\setR^n)}\|\partial_i
\varphi\|_{W_{-s,-\delta}^{p\prime}(\setR^n)}\leq C
\|u\|_{W_{s,\delta}^p(\setR^n)}\|\varphi\|_{W_{-s+1,-\delta-1}^{p\prime}
(\setR^n) } 
\end{equation*}
for all $\varphi\in C_0^\infty(\setR^n)$.
Hence by (\ref{eq:dual}),
\begin{math}
 \|\partial_i u\|_{W_{s,\delta}^p(\setR^n)}
\leq C \|u\|_{W_{s+1,\delta-1}^p(\setR^n)}.
\end{math}
For the remaining  value of $s$ we use interpolation in order
to  obtain (\ref{eq:norm7}). 
\end{proof}
\begin{prop}
\label{prop:mult-smooth}
Let $N$ be a nonnegative integer  and assume  $\zeta$ is a smooth function such
that 
\begin{equation}
\label{eq:smooth}
 |\partial^\alpha \zeta(x)|\leq C_N \quad \text{for all } |\alpha|\leq N \
\text{and } x\in \setR^n.
\end{equation}
If $u\in W_{s,\delta}^p(\setR^n)$, $|s|< N$ and $1<p<\infty$, then
\begin{equation*}
 \|\zeta u\|_{W_{s,\delta}^p(\setR^n)}\leq C_N \|u\|_{W_{s,\delta}^p(\setR^n)}.
\end{equation*}
\end{prop}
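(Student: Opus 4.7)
My plan is to reduce the claim to a scale-uniform multiplier estimate on each dyadic piece of the norm~(\ref{eq:norm1}), handle the positive-integer case by Leibniz on the equivalent Sobolev--Sobolevskij norm, extend to negative~$s$ by duality, and cover the remaining values by real interpolation.

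Because $\psi_j(2^j y)\,\zeta(2^j y)\,u(2^j y)=\zeta(2^j y)\,(\psi_j u)_{(2^j)}(y)$, writing $v_j:=(\psi_j u)_{(2^j)}$ and noting that each $v_j$ is supported in the fixed annulus $A:=\{2^{-2}\leq|y|\leq 2\}$, the definition~(\ref{eq:norm1}) yields
\begin{equation*}
\|\zeta u\|_{W_{s,\delta}^p(\setR^n)}^p=\sum_{j=0}^\infty 2^{(\delta+n/p)pj}\,\bigl\|\zeta(2^j\cdot)\,v_j\bigr\|_{W_s^p}^p.
\end{equation*}
The proof therefore reduces to establishing $\|\zeta(2^j\cdot)\,v\|_{W_s^p}\leq C\,\|v\|_{W_s^p}$ with $C$ depending only on $C_N$, $s$, $p$, $n$ and the cutoff family $\{\psi_j\}$, whenever $v\in W_s^p(\setR^n)$ is supported in $A$.

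For $s=m$ a nonnegative integer with $m<N$, I would pass to the norm~(\ref{eq:norm3}) via Theorem~\ref{thm:2}, apply Leibniz to $\partial^\alpha(\zeta u)=\sum_{\beta\leq\alpha}\binom{\alpha}{\beta}\partial^\beta\zeta\,\partial^{\alpha-\beta}u$, invoke (\ref{eq:smooth}) to bound each $\|\partial^\beta\zeta\|_{L^\infty}\leq C_N$, and distribute the weight $(1+|x|)^{\delta+|\alpha|}$ so as to match the weighted derivatives of $u$ on the right-hand side. For $-N<s<0$, I would use duality: for any $\varphi\in C_0^\infty(\setR^n)$, Proposition~\ref{prop:dual2} together with the identity $\langle\zeta u,\varphi\rangle_W=\langle u,\zeta\varphi\rangle_W$ gives
\begin{equation*}
|\langle\zeta u,\varphi\rangle_W|\leq \|u\|_{W_{s,\delta}^p(\setR^n)}\,\|\zeta\varphi\|_{W_{-s,-\delta}^{p'}(\setR^n)}\leq C_N\,\|u\|_{W_{s,\delta}^p(\setR^n)}\,\|\varphi\|_{W_{-s,-\delta}^{p'}(\setR^n)},
\end{equation*}
where the last step applies the already-proved positive-order case (with order $-s<N$) to $\zeta\varphi$; taking the supremum over $\varphi$ via (\ref{eq:dual}) concludes the negative case. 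Real interpolation through Theorem~\ref{thm:interpolation} then fills in all non-integer $s$ in $(-N,N)$.

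The main obstacle is precisely the uniform-in-$j$ multiplier estimate on the fixed annulus. The rescaled function $\zeta(2^j\cdot)$ is $L^\infty$-bounded by $C_N$, but its $\ell$-th derivatives scale like $2^{j\ell}$, so a naive Leibniz bound in $W_s^p$ would contribute an unwanted $2^{js}$ factor at the $j$-th dyadic piece. Resolving this requires localising $\zeta(2^j\cdot)$ by a fixed cutoff supported in $A$ before applying a Besov multiplier lemma, and then carefully tracking the scaling constants along Triebel's equivalence~(\ref{eq:norm5}), so that only the unscaled bound $C_N$ survives when the pieces are reassembled into~(\ref{eq:norm1}).
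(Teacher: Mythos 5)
Your opening reduction is, in fact, the paper's entire proof: after rescaling, $(\psi_j\zeta u)_{(2^j)}=\zeta(2^j\cdot)\,(\psi_j u)_{(2^j)}$, and the paper applies the unweighted multiplier estimate $\|\zeta v\|_{W_s^p}\leq C_N\|v\|_{W_s^p}$ (valid for $|s|<N$) termwise, remarking only that $|(\partial^\alpha\zeta)(2^jx)|\leq C_N$. So the difficulty you flag at the end is exactly the right one to worry about; the problem is that neither of your routes around it closes. In the integer case, Leibniz applied to the norm (\ref{eq:norm3}) produces terms $(1+|x|)^{\delta+|\alpha|}\,\partial^\beta\zeta\,\partial^{\alpha-\beta}u$, whereas the norm of $u$ only controls $(1+|x|)^{\delta+|\alpha|-|\beta|}\partial^{\alpha-\beta}u$; the leftover factor $(1+|x|)^{|\beta|}$ cannot be absorbed by $\|\partial^\beta\zeta\|_{L^\infty}\leq C_N$, and no redistribution of the weight repairs this. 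Likewise, inserting a fixed cutoff supported near the annulus $A$ does not tame $\zeta(2^j\cdot)$: on $A$ this function genuinely oscillates at frequency $2^j$, so its multiplier norm on $W_s^p$ is of order $2^{js}$, and that loss survives any tracking of constants through (\ref{eq:norm5}).

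Indeed, no argument can succeed from (\ref{eq:smooth}) alone. Take $\zeta(x)=\sin(x_1)$ (so (\ref{eq:smooth}) holds with $C_N=1$ for every $N$), $s=1$, and $u(x)=(1+|x|^2)^{-\sigma/2}$ with $\delta+\frac{n}{p}<\sigma\leq\delta+1+\frac{n}{p}$. Then $u\in W_{1,\delta}^p(\setR^n)$, but $\partial_1(\zeta u)$ contains the term $\cos(x_1)\,u$, which is of size $|x|^{-\sigma}$ on a set of positive density and hence not in $L^p$ against the weight $(1+|x|)^{\delta+1}$; by Theorem \ref{thm:2}, $\zeta u\notin W_{1,\delta}^p(\setR^n)$. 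The missing ingredient is therefore a hypothesis rather than a sharper estimate: what makes the termwise argument work, and what is available for every $\zeta$ to which the proposition is applied in the paper (the cut--offs $\chi_R$, $\chi_\rho$ and compactly supported commutator coefficients, cf.\ Proposition \ref{prop:Properties:11}), is the decay $|\partial^\alpha\zeta(x)|\lesssim(1+|x|)^{-|\alpha|}$ for $1\leq|\alpha|\leq N$. With that decay the chain--rule factor cancels, $\partial_y^\alpha\bigl[\zeta(2^jy)\bigr]=2^{j|\alpha|}(\partial^\alpha\zeta)(2^jy)$ is bounded on $A$ uniformly in $j$, the unweighted multiplier lemma applies with a $j$--independent constant, and your duality step for $s<0$ and the interpolation step then go through as written. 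Without such decay (or compact support of $\nabla\zeta$), the desired bound itself fails, so the gap in your proposal cannot be filled by more careful bookkeeping.
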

\begin{proof}
  For such smooth function 
$\|\zeta   u\|_{W_s^p}\leq C_N \|u\|_{W_s^p}$ holds.
  This inequality can be proven by interpolation, for details see the
  Lemma in \cite{triebel76:_spaces_kudrj}.
  By (\ref{eq:smooth}), $|(\partial^\alpha \zeta)(2^jx)|\leq C_N$, and
  hence
  \begin{equation*}
 \|\zeta u\|_{W_{s,\delta}^p(\setR^n)}^p=\sum_{j=0}^\infty
2^{(\delta+\frac{n}{p})pj}
\left\|\left(\psi_j \zeta u\right)_{(2^j)}\right\|_{W_s^p}^p\leq C_N^p
\sum_{j=0}^\infty 2^{(\delta+\frac{n}{p})pj}
\left\|\left(\psi_j u\right)_{(2^j)}\right\|_{W_s^p}^p
=C_N^p \| u\|_{W_{s,\delta}^p(\setR^n)}^p
\end{equation*}
\end{proof}
\begin{prop}
\label{prop:Properties:11}
Let  $ \chi_R\in C^\infty(\mathbb{R}^n)$ be a cut--off
function such that
$\chi_R(x)=1$
for $|x|\leq R$, $\chi_R(x)=0$ for $|x|\geq 2R$ and
\begin{math}
  |\partial^\alpha \chi_R|\leq c_\alpha R^{-|\alpha|}
\end{math}. Then for $\delta'<\delta$ 
\begin{equation*}
\|(1-\chi_R)u\|_{W_{s,\delta'}^p(\setR^n)}\lesssim{R^{ -(\delta-\delta') } }
\|u\|_{W_{s,\delta}^p(\setR^n)}
\end{equation*}
holds.
\end{prop}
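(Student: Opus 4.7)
The plan is to work directly with the dyadic characterisation in Definition \ref{def:spaces} and exploit the fact that $1-\chi_R$ is supported far from the origin. Let $j_R$ denote the integer with $2^{j_R}\sim R$. First I would observe that if $2^{j+1}\le R$ (that is, roughly $j\le j_R-1$), then $\operatorname{supp}\psi_j\subset\{|x|\le R\}$, where $\chi_R\equiv 1$, so the dyadic piece $\bigl(\psi_j(1-\chi_R)u\bigr)_{(2^j)}$ vanishes outright. Hence
\[
\|(1-\chi_R)u\|_{W_{s,\delta'}^p(\setR^n)}^p=\sum_{j\ge j_R-1}2^{(\delta'+\frac{n}{p})pj}\bigl\|(\psi_j(1-\chi_R)u)_{(2^j)}\bigr\|_{W_s^p}^p.
\]

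The next step is to rewrite each surviving term as a multiplier acting on $(\psi_j u)_{(2^j)}$. Setting $\zeta_j(x):=(1-\chi_R)(2^jx)$ one has $\bigl(\psi_j(1-\chi_R)u\bigr)_{(2^j)}=\zeta_j\cdot(\psi_j u)_{(2^j)}$, and the chain rule together with $|\partial^\alpha\chi_R|\lesssim R^{-|\alpha|}$ gives $|\partial^\alpha\zeta_j(x)|\le c_\alpha(2^j/R)^{|\alpha|}$. For the bounded range $j_R-1\le j\le j_R+3$, one has $2^j\sim R$, so all derivatives of $\zeta_j$ are uniformly bounded and Proposition \ref{prop:mult-smooth} (applied fibrewise, i.e.\ the unweighted multiplier estimate used in its proof) yields $\|\zeta_j v\|_{W_s^p}\le C\|v\|_{W_s^p}$ with $C$ independent of $R$ and $j$. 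For $j\ge j_R+4$, one has $2R/2^j<1/4$, so $\zeta_j\equiv 1$ on a neighbourhood of $\operatorname{supp}(\psi_j u)_{(2^j)}\subset\{2^{-2}\le|x|\le 2\}$, and therefore $\zeta_j\cdot(\psi_j u)_{(2^j)}=(\psi_j u)_{(2^j)}$ as distributions. In either case,
\[
\bigl\|(\psi_j(1-\chi_R)u)_{(2^j)}\bigr\|_{W_s^p}\;\lesssim\;\bigl\|(\psi_j u)_{(2^j)}\bigr\|_{W_s^p}
\]
uniformly in $R$ and $j\ge j_R-1$.

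Finally, since the summation runs over $j\ge j_R-1$, one has $2^{-j(\delta-\delta')}\lesssim R^{-(\delta-\delta')}$ (using $\delta-\delta'>0$), and rewriting the weight as
\[
2^{(\delta'+\frac{n}{p})pj}=2^{-(\delta-\delta')pj}\cdot 2^{(\delta+\frac{n}{p})pj}\;\lesssim\;R^{-(\delta-\delta')p}\cdot 2^{(\delta+\frac{n}{p})pj}
\]
reduces the sum to
\[
\|(1-\chi_R)u\|_{W_{s,\delta'}^p}^p\;\lesssim\;R^{-(\delta-\delta')p}\sum_{j\ge j_R-1}2^{(\delta+\frac{n}{p})pj}\bigl\|(\psi_j u)_{(2^j)}\bigr\|_{W_s^p}^p\;\le\;R^{-(\delta-\delta')p}\|u\|_{W_{s,\delta}^p}^p.
\]

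I expect the one delicate point to be justifying the multiplier bound for $\zeta_j$ uniformly in $j$. The awkward range is $j\gg j_R$, where $|\partial^\alpha\zeta_j|$ blows up; the resolution is that in this range $\zeta_j$ is identically $1$ on an open neighbourhood of the support of $(\psi_j u)_{(2^j)}$, so the multiplication has no effect and no quantitative multiplier bound is needed there. Once this dichotomy between the intermediate $j$'s (handled by Proposition \ref{prop:mult-smooth}) and the large $j$'s (handled by support considerations) is in place, the remainder is bookkeeping in the dyadic weight.
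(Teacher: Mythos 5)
Your proof is correct and follows essentially the same route as the paper: discard the dyadic blocks with $2^{j+1}\le R$, which vanish, bound the surviving blocks by $\|(\psi_j u)_{(2^j)}\|_{W_s^p}$, and absorb the factor $2^{-(\delta-\delta')pj}\lesssim R^{-(\delta-\delta')p}$ into the weight before summing. The only difference is that you make explicit the uniform bound $\|(\psi_j(1-\chi_R)u)_{(2^j)}\|_{W_s^p}\lesssim\|(\psi_j u)_{(2^j)}\|_{W_s^p}$ (via the dichotomy between $2^j\sim R$, where the rescaled cut-off has bounded derivatives, and $2^j\gg R$, where it is identically $1$ near the support), a step the paper's first inequality leaves implicit.
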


\begin{proof}
    Let $J_0$ be the smallest  integer such that $R\leq 2^{J_0+1}$.
    Then $(1-\chi_R)\psi_j=0$ for $j=0,1,...,J_0-1$, and hence
    \begin{equation*}
      \begin{split} &\|(1-\chi_R) u\|_{W_{s,\delta'}^p(\setR^n)}^p  =
\sum_{j=J_0}^\infty 2^{\left( \delta' +\frac{n}{p} \right)pj} \left\| \left(
\psi_j(1-\chi_R) u \right)_{2^j} \right\|_{W_{s}^p}^p \\   \lesssim & 
\sum_{j=J_0}^\infty  2^{-(\delta-\delta^\prime)pj}2^{\left(\delta +\frac{n}{p}
\right)pj} \left\| \left( \psi_j u \right)_{2^j} \right\|_{W_s^p}^p \lesssim
2^{-(\delta-\delta^\prime)pJ_0} \sum_{j=J_0}^\infty 2^{\left(\delta +\frac{n}{p}
\right)pj} \left\| \left( \psi_j u \right)_{2^j} \right\|_{W_s^p}^p \\  \lesssim
& \left({R^{-(\delta-\delta')}}\|u\|_{W_{s,\delta}^p(\setR^n)}\right)^p.
\end{split}
    \end{equation*}
   
\end{proof}
The next proposition deals with embeddings.
It concerns also the embedding into the weighted space of continuously
differentiable functions, $C_{\beta}^m(\setR^n)$ where $m$ is a
nonnegative integer, $\beta\in \mathbb{R}$ and which posses the
following norm
\begin{eqnarray}  
\label{eq:norm-cont}
\displaystyle
& \|u\|_{C_\beta^m(\setR^n)}=\sum_{|\alpha|\leq
m}\sup_{\setR^n}\left((1+|x|)^{\beta+|\alpha|} |\partial^\alpha
u(x)|\right).
\end{eqnarray}
\begin{prop}[Embedding]
  \label{prop:2}$\quad$
  \begin{enumerate}
    \item[{\rm (a)}] \label{8} Let $s_1\leq s_2$ and
    $\delta_1\leq\delta_2$, then the inclusion
    $i:W_{s_2,\delta_2}^p(\setR^n)\to
W_{s_1,\delta_1}^p(\setR^n)$ is
    continuous.

    \item[{\rm (b)}]
    \label{eq:embd-compact}
    Let $s_1<s_2$ and $\delta_1<\delta_2 $, then the embedding
    \begin{math} i: W_{s_2,\delta_2}^p(\setR^n)\to
      W_{s_1,\delta_1}^p(\setR^n)
    \end{math}
    is compact.

    \item[{\rm (c)}]
    \label{eq:embd-cont}
    Let $s>\frac{n}{p} + m$ and $\delta+\frac n p\geq\beta$, then the
    embedding $i:W_{s,\delta}^p(\setR^n)\to C_\beta^m(\setR^n)$ is
    continuous.

  \end{enumerate}
\end{prop}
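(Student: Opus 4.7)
The plan is to read off each of the three embeddings from the dyadic series defining the weighted norm in Definition~\ref{def:spaces}, reducing everything to a corresponding statement for the unweighted Besov spaces $W_s^p$.

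For part (a), I would compare (\ref{eq:norm1}) with itself term by term. The standard inclusion $W_{s_2}^p \hookrightarrow W_{s_1}^p$ for $s_1\le s_2$ is visible directly from (\ref{eq:norm2}) since $2^{js_1 p}\le 2^{js_2 p}$; combined with $2^{(\delta_1+n/p)pj}\le 2^{(\delta_2+n/p)pj}$ under $\delta_1\le\delta_2$, summation in $j$ yields the continuous inclusion.

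For part (c), fix $|\alpha|\le m$ and $x$ in the support of $\psi_j$, so that $|x|\asymp 2^j$ and $(1+|x|)^{\beta+|\alpha|}\lesssim 2^{j(\beta+|\alpha|)}$. Only finitely many $\psi_j$ contribute at any given $x$, so
$$(1+|x|)^{\beta+|\alpha|}|\partial^\alpha u(x)| \lesssim \sup_{j\ge 0} 2^{j(\beta+|\alpha|)}\|\partial^\alpha(\psi_j u)\|_{L^\infty}.$$
The scaling identity gives $\|\partial^\alpha(\psi_j u)\|_{L^\infty}=2^{-j|\alpha|}\|\partial^\alpha(\psi_j u)_{(2^j)}\|_{L^\infty}$, and the classical Besov embedding $W_s^p(\setR^n)\hookrightarrow C_b^m(\setR^n)$ for $s>\frac{n}{p}+m$ controls this by $\|(\psi_j u)_{(2^j)}\|_{W_s^p}$. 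Extracting a single term from (\ref{eq:norm1}) gives $\|(\psi_j u)_{(2^j)}\|_{W_s^p}\le 2^{-(\delta+n/p)j}\|u\|_{W_{s,\delta}^p(\setR^n)}$, so
$$2^{j(\beta+|\alpha|)}\|\partial^\alpha(\psi_j u)\|_{L^\infty} \lesssim 2^{j(\beta-\delta-n/p)}\|u\|_{W_{s,\delta}^p(\setR^n)},$$
which is uniformly bounded in $j$ precisely under the hypothesis $\beta\le\delta+\frac{n}{p}$. Summing over $|\alpha|\le m$ finishes (c).

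Part (b) is the main business; the plan is a classical tail-truncation plus Rellich argument. Given a bounded sequence $\{u_k\}\subset W_{s_2,\delta_2}^p(\setR^n)$, Proposition~\ref{prop:Properties:11} applied with $s=s_1$, $\delta=\delta_2$, $\delta'=\delta_1$, combined with part (a), yields
$$\|(1-\chi_R)u_k\|_{W_{s_1,\delta_1}^p(\setR^n)}\lesssim R^{-(\delta_2-\delta_1)}\|u_k\|_{W_{s_2,\delta_2}^p(\setR^n)},$$
so the tails can be made uniformly small by choosing $R$ large. The truncation $\chi_R u_k$ is supported in $|x|\le 2R$, hence only finitely many $\psi_j$ contribute; each rescaled component $(\psi_j\chi_R u_k)_{(2^j)}$ sits in a fixed bounded region of $\setR^n$ and is bounded in $W_{s_2}^p$. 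A Rellich-type compactness for $W_{s_2}^p\hookrightarrow W_{s_1}^p$ on bounded domains (which is where the strict inequality $s_1<s_2$ enters), followed by a finite diagonal extraction in $j$, produces a subsequence with $\{\chi_R u_{k_i}\}$ Cauchy in $W_{s_1,\delta_1}^p(\setR^n)$. A standard $\varepsilon/3$ argument together with diagonalization over $R\to\infty$ then delivers a subsequence of $\{u_k\}$ that is Cauchy in $W_{s_1,\delta_1}^p(\setR^n)$.

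The main obstacle is (b): its proof genuinely requires both strict inequalities, with $\delta_1<\delta_2$ driving the quantitative tail decay via Proposition~\ref{prop:Properties:11} and $s_1<s_2$ supplying compactness of the truncations on bounded domains. Parts (a) and (c) are essentially bookkeeping around the dyadic representation of the norm.
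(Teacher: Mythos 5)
Your proposal is correct and follows essentially the same route as the paper: termwise comparison of the dyadic norms for (a); truncation, the tail estimate of Proposition \ref{prop:Properties:11} (driven by $\delta_1<\delta_2$), and compactness of the unweighted embedding for compactly supported functions (driven by $s_1<s_2$) for (b); and a termwise application of the unweighted sup-embedding with the exponent comparison $\beta\le\delta+\frac{n}{p}$ for (c). The only cosmetic differences are that the paper handles $m\ge 1$ in (c) by reducing to $m=0$ via Proposition \ref{prop:1} rather than by scaling the derivatives directly, and it phrases (b) as a norm limit of compact operators rather than as a sequential diagonal extraction.
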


\begin{proof}
From the definitions of the norms (\ref{eq:norm2}) and (\ref{eq:norm1}), we see
that they are increasing functions of both $s$ and $\delta$. Hence
$\|u\|_{W_{s_1,\delta_1}^p(\setR^n)}\leq \|u\|_{W_{s_2,\delta_2}^p(\setR^n)}$
and that proves (a).
 To prove  (b), we let $N$ be a positive integer and set
$i_N(u)=\sum_{j=0}^N (\psi_j u)_{2^j}$. 
Since $i_N(u)$ has support in $\{|x|\leq 2^{N+2}\}$ and $s_1<s_2$,
$i_N: W_{s_2}^p\to W_{s_1}^p$ is a compact operator (see e.g.
\cite[\S3.3.2]{Edmunds_Triebel}\footnote{In this Theorem the authors estimate
the entropy numbers of the embedding and show that it tends to zero. This,
however, implies the compactness.}). In addition, by Proposition
\ref{prop:Properties:11}
we have
\begin{equation*}
 \|i_N(u)-i(u)\|_{W_{s_1,\delta_1}^p(\setR^n)}\lesssim
2^{-N(\delta_2-\delta_1)}
\|u\|_{W_{s_2,\delta_2}^p(\setR^n)}.
\end{equation*} 
Thus the embedding $i$ is a norm limit of compact operators, hence it is itself
 compact (\protect  see e.g. \cite[Theorem 4.11]{Schechter}).

We turn now to (c).  Assume first that $m=0$ and $s>\frac{n}{p} $, then
\begin{math}
 \sup_{\setR^n}|u(x)|\lesssim \|u\|_{W_s^p}
\end{math} (see e.~g.~\cite[\S32]{Tartar}). Applying it term--wise to the norm
(\ref{eq:norm1}), we have
\begin{equation}
\label{eq:6}
\begin{split}
 &   \sup_{\setR^n}(1+|x|)^\beta|u(x)|
 \leq  2^\beta \sup_{j\geq 0}\left(2^{\beta
j}\sup_{\setR^n}|\psi_j(x)u(x)|\right)
\\  = &  2^\beta \sup_{j\geq0}\left(2^{\beta j}\sup_{\setR^n}|\psi_j(2^j x)u(2^j
x)|\right) \lesssim 
2^\beta  \sup_{j\geq 0}\left(2^{\beta j}\|(\psi_j
u)_{2^j}\|_{W_s^p}\right)
\\  \lesssim  & 2^\beta  \sup_{j\geq0}\left(2^{(\delta+\frac{n}{p}) j}\|(\psi_j
u)_{2^j}\|_{W_s^p}\right)
  \lesssim 2^\beta  \|u\|_{W_{s,\delta}^p(\setR^n)}.
\end{split}
\end{equation}

If $m\geq1$ and $|\alpha|\leq m$, then $\partial^\alpha u\in
W_{s-|\alpha|,\delta+|\alpha|}^p(\setR^n)$ by Proposition \ref{prop:1}.
So applying (\ref{eq:6})  to $\partial^\alpha u$ with
$\delta^\prime=\delta+|\alpha|$  and $\beta^\prime=\beta+|\alpha|$, we  obtain
$ \|\partial^\alpha u\|_{C_{\beta+|\alpha|}^0(\setR^n)}\leq C \|\partial^\alpha
u\|_{W_{s-|\alpha|,\delta+|\alpha|}^p(\setR^n)}$.   
\end{proof}

For further applications we discuss the construction of the sequence
$\{\psi_j\}$ that appears  in Definition \ref{def:spaces}. Let $h$ be a
$C^\infty(\setR)$ function such that $h(t)=-1$ for
$t\leq \frac{1}{4}$, $h(t)=0$ for $1/2\leq t \leq 1$ and $h(t)=1$ for $2\leq
t$. Let
\begin{equation}
\label{eq:norm6}
 g(t)=\left\{ 
\begin{array}{ll}
e^{\frac{-t^2}{(1-t^2)}}, & |t|<1\\
0, & |t|\geq 1
\end{array}\right..
\end{equation} 
Then the functions $\psi_j(x)=g(h(2^{-j}|x|))$ satisfy the requirements of the
dyadic resolution above,  Definition \ref{def:spaces}. Moreover, for any
positive
$\gamma$, $\psi_j^\gamma(x)=g^\gamma(h(2^{-j}|x|))$ and from (\ref{eq:norm6})
we see that there
are two constants $C_1(\gamma,\alpha)$ and $C_2(\gamma,\alpha)$ such that
\begin{equation*}
 C_1(\gamma,\alpha)|\partial^\alpha \psi_j(x)|\leq |\partial^\alpha
\psi_j^\gamma(x)|\leq
 C_2(\gamma,\alpha)|\partial^\alpha \psi_j(x)|
\end{equation*}
for any multi--index $\alpha$, and these inequalities are independent of $j$.
Therefore the family
$\{\psi_j^\gamma\}$ satisfies  condition (\ref{eq:const:4})
and hence by Theorem \ref{thm1} (a) we obtain:
\begin{prop}
\label{prop:eqiv}
Let $\gamma$ be positive number, then 
 \begin{equation}
\label{eq:norm8}
 \|u\|_{W_{s,\delta}^p(\setR^n)}^p\simeq
\sum_{j=0}^\infty 2^{(\delta+\frac{n}{p})pj}
\left\|\left(\psi_j^\gamma u\right)_{(2^j)}\right\|_{W_s^p}^p.
\end{equation} 
\end{prop}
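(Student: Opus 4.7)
The plan is to verify that the family $\{\psi_j^\gamma\}_{j=0}^\infty$ qualifies as a dyadic resolution satisfying the conditions of Definition \ref{def:spaces}, and then invoke the norm-equivalence statement of Theorem \ref{thm1}(a), which says that any two such resolutions produce equivalent norms on $W_{s,\delta}^p(\setR^n)$.

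First I would check that $\{\psi_j^\gamma\}$ inherits the geometric support structure of $\{\psi_j\}$. Since $\gamma>0$, the map $t\mapsto t^\gamma$ sends $0$ to $0$ and fixes $1$, so $\psi_j^\gamma$ has the same zero set and the same level-one set as $\psi_j$. Explicitly, for $j\geq 1$ the support of $\psi_j^\gamma$ is contained in the dyadic shell $\{2^{j-2}\leq|x|\leq 2^{j+1}\}$ and $\psi_j^\gamma(x)=1$ on $\{2^{j-1}\leq|x|\leq 2^j\}$, with the analogous statements for $j=0$. So the geometric skeleton of the resolution is preserved.

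Next I would verify the derivative bound (\ref{eq:const:4}) for $\psi_j^\gamma$. The paragraph preceding the proposition already records the pointwise estimate $|\partial^\alpha \psi_j^\gamma(x)|\leq C_2(\gamma,\alpha)|\partial^\alpha \psi_j(x)|$, with constants independent of $j$. Composed with the assumption $|\partial^\alpha \psi_j(x)|\leq C_\alpha 2^{-|\alpha|j}$, this gives $|\partial^\alpha \psi_j^\gamma(x)|\leq C_2(\gamma,\alpha)C_\alpha 2^{-|\alpha|j}$, which is exactly condition (\ref{eq:const:4}) for $\{\psi_j^\gamma\}$ with $j$-independent constants.

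With both requirements in hand, Theorem \ref{thm1}(a) applies to the pair of dyadic resolutions $\{\psi_j\}$ and $\{\psi_j^\gamma\}$, yielding that the two norms they define on $W_{s,\delta}^p(\setR^n)$ are equivalent, which is precisely (\ref{eq:norm8}). There is really no obstacle in the argument: the only nontrivial input is the uniform-in-$j$ comparison of derivatives of $\psi_j^\gamma$ and $\psi_j$, and this has already been carried out in the paragraph introducing the specific construction $\psi_j=g\circ h(2^{-j}|\cdot|)$. The proposition is thus an immediate bookkeeping corollary of Triebel's norm-invariance result.
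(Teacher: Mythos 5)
Your proposal is correct and follows essentially the same route as the paper: the paper's argument is precisely to observe that $\psi_j^\gamma=g^\gamma(h(2^{-j}|\cdot|))$ keeps the same support and level structure, satisfies the uniform-in-$j$ derivative bounds (\ref{eq:const:4}) via the comparison $C_1(\gamma,\alpha)|\partial^\alpha\psi_j|\leq|\partial^\alpha\psi_j^\gamma|\leq C_2(\gamma,\alpha)|\partial^\alpha\psi_j|$, and then to invoke Theorem \ref{thm1}(a) on the equivalence of norms for different admissible dyadic resolutions. No gap to report.
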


Using  Proposition \ref{prop:eqiv} we establish multiplication 
and  fractional power properties of the weighted Besov spaces.
\begin{prop}
\label{prop:4}
Assume $s\leq \min\{s_1,s_2\}$, $s_1+s_2>s+\frac{n}{p}$,  $s_1+s_2\geq
n\cdot\max\{0,(\frac{2}{p}-1)\}$ and
$\delta\leq\delta_1+\delta_2 + \frac{n}{p}$, then the multiplication 
\begin{equation*}
 W_{s_1,\delta_1}^p(\setR^n) \times  W_{s_2,\delta_2}^p(\setR^n)\to 
W_{s,\delta}^p(\setR^n)
\end{equation*}
is continuous.
\end{prop}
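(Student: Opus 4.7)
The plan is to localize $u$ and $v$ into the dyadic shells that define the weighted norm (Definition~\ref{def:spaces}), rescale each shell to a fixed annulus, apply the known multiplication theorem for the unweighted Besov space $W_s^p=B_{p,p}^s$, and reassemble the pieces via the weight inequality $\delta\leq\delta_1+\delta_2+n/p$. The input I will quote without proof is its unweighted counterpart: under the stated conditions on $s,s_1,s_2,p$,
\begin{equation*}
\|fg\|_{W_s^p(\setR^n)}\lesssim \|f\|_{W_{s_1}^p(\setR^n)}\,\|g\|_{W_{s_2}^p(\setR^n)}.
\end{equation*}
This is standard paraproduct calculus (see e.g.\ \cite{Bahouri_2011,triebel83}); the side condition $s_1+s_2\geq n\max(0,2/p-1)$ is exactly what is needed to control the low-low interaction of Bony's decomposition when $p<2$.

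Define the enlarged cutoff $\tilde\psi_j:=\sum_{k=\max(0,j-2)}^{j+2}\psi_k$. By the support properties of $\{\psi_k\}$ we have $\tilde\psi_j\equiv 1$ on $\supp\psi_j$, hence $\psi_j(uv)=(\psi_j u)(\tilde\psi_j v)$ and, after rescaling,
\begin{equation*}
(\psi_j uv)_{(2^j)}=(\psi_j u)_{(2^j)}\cdot(\tilde\psi_j v)_{(2^j)}.
\end{equation*}
Both factors on the right are supported in the fixed annulus $\{1/4\leq|x|\leq 8\}$, so the unweighted multiplication furnishes, with a constant uniform in $j$,
\begin{equation*}
\|(\psi_j uv)_{(2^j)}\|_{W_s^p}\lesssim \|(\psi_j u)_{(2^j)}\|_{W_{s_1}^p}\,\|(\tilde\psi_j v)_{(2^j)}\|_{W_{s_2}^p}.
\end{equation*}

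The weight hypothesis rearranges to $2^{(\delta+n/p)pj}\leq 2^{(\delta_1+n/p)pj}\cdot 2^{(\delta_2+n/p)pj}$. Writing $A_j:=2^{(\delta_1+n/p)pj}\|(\psi_j u)_{(2^j)}\|_{W_{s_1}^p}^p$ and $B_j$ for the analogous quantity associated to $\tilde\psi_j v$ with $(s_2,\delta_2)$ in place of $(s_1,\delta_1)$, the previous display yields
\begin{equation*}
\|uv\|_{W_{s,\delta}^p(\setR^n)}^p\lesssim \sum_j A_j B_j\leq \Bigl(\sup_j A_j\Bigr)\sum_j B_j.
\end{equation*}
The supremum is at most $\|u\|_{W_{s_1,\delta_1}^p}^p$ because any single term is dominated by the full nonnegative $\ell^p$ sum defining the norm. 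For $\sum_j B_j$, each $\psi_k$ appears in at most five of the $\tilde\psi_j$, and for $|j-k|\leq 2$ the distribution $(\psi_k v)_{(2^j)}$ differs from $(\psi_k v)_{(2^k)}$ only by a dilation of ratio $2^{j-k}\in\{1/4,\dots,4\}$, which changes the $W_{s_2}^p$ norm by a bounded factor; together with $2^{(\delta_2+n/p)pj}\sim 2^{(\delta_2+n/p)pk}$ for neighboring indices, this gives $\sum_j B_j\lesssim \|v\|_{W_{s_2,\delta_2}^p}^p$. Combining these two estimates proves the proposition.

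The main obstacle is the unweighted multiplication itself, in particular the sharp appearance of $n\max(0,2/p-1)$, which records the delicate low-low paraproduct estimate in the regime $p<2$; once it is cited, the weighted extension is essentially dyadic bookkeeping. The only secondary technicality is the comparison between $(\psi_k v)_{(2^j)}$ and $(\psi_k v)_{(2^k)}$ for adjacent indices, but since the dilations involved lie in a fixed compact range their effect on the $W_{s_2}^p$ norm is bounded by the equivalence of norms recorded in Theorem~\ref{thm1}(a).
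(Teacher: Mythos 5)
Your argument is correct and rests on the same two pillars as the paper's own proof: a term-wise application of the unweighted Besov multiplication theorem to the rescaled dyadic pieces, followed by a summation that exploits $\delta\le\delta_1+\delta_2+\frac{n}{p}$. The implementations differ only in the bookkeeping. The paper writes $\psi_j^2\,uv=(\psi_j u)(\psi_j v)$ and invokes Proposition \ref{prop:eqiv} (the norm built from $\psi_j^\gamma$ is equivalent to the original one), so each factor carries exactly the index $j$ of the corresponding norm; the series is then summed by Cauchy--Schwarz together with $\|\cdot\|_{\ell^2}\le\|\cdot\|_{\ell^1}$ for nonnegative sequences. You instead keep $\psi_j$ on the left and enlarge the cutoff on one factor, which obliges you to (i) assume $\sum_k\psi_k\equiv1$ so that $\tilde\psi_j\equiv1$ on $\supp\psi_j$ --- this normalization is not part of Definition \ref{def:spaces}, though it can be arranged at no cost by Theorem \ref{thm1}(a), so it should be stated explicitly --- and (ii) compare $(\psi_k v)_{(2^j)}$ with $(\psi_k v)_{(2^k)}$ for $|j-k|\le2$; the correct justification for (ii) is the scaling behaviour of the unweighted Besov norm under dilations in a fixed compact range (precisely the estimate used in the paper's proof of Proposition \ref{prop:Moser}), not Theorem \ref{thm1}(a), which concerns changes of the dyadic resolution in the weighted norm. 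Your $\ell^\infty\times\ell^1$ H\"older step is equivalent in effect to the paper's Cauchy--Schwarz step, and both proofs quote the same unweighted input (Runst--Sickel type multiplication, with $s_1+s_2\ge n\max\{0,\frac{2}{p}-1\}$ governing the low--low interaction); the support remark about the fixed annulus is immaterial (and the rescaled support of $\tilde\psi_j v$ is in fact $\{2^{-4}\le|x|\le2^{3}\}$), since the unweighted estimate needs no support restriction. In short, the paper's $\psi_j^2$ device buys it out of both (i) and (ii), while your route is a little longer but equally valid once those two justifications are filled in.
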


\begin{proof}
Let $u\in W_{s_1,\delta_1}^p(\setR^n)$ and $v\in W_{s_2,\delta_2}^p(\setR^n)$,
then by the  
 corresponding unweighed embedding results, we have
\begin{equation*}
 \|\left(\psi_j^2 u v\right)_{2^j}\|_{W_{s}^p}\lesssim  \|\left(\psi_j
u\right)_{2^j}\|_{W_{s_1}^p} \|\left(\psi_j v\right)_{2^j}\|_{W_{s_2}^p}.
\end{equation*} 
 For the proof of these types of results see
\cite[\S4.6.1]{runst96:_sobol_spaces_fract_order_nemyt}.  Set
$a_j=\|\left(\psi_j u\right)_{2^j}\|_{W_{s_1}^p}^p$ and
$b_j=\|\left(\psi_j
v\right)_{2^j}\|_{W_{s_2}^p}^p$, then by Proposition \ref{prop:eqiv} and the
Cauchy Schwarz inequality
 we have
\begin{equation*}
\begin{split}
&\| uv\|_{W_{s,\delta}^p(\setR^n)}^p \lesssim
  \sum_{j=0}^\infty 2^{\left(\delta +\frac{n}{p}\right)pj}
  \left\| \left( \psi_j^2 uv \right)_{2^j} \right\|_{W_{s}^p}^p
  \lesssim\sum_{j=0}^\infty 2^{\left(
\delta_1+\frac{n}{p}+\delta_2+\frac{n}{p}\right)pj}a_j b_j
  \\ \lesssim &
  \left(\sum_{j=0}^\infty \left(2^{\left( \delta_1+\frac{n}{p}\right)pj}
 a_j\right)^2\right)^{\frac{1}{2}}\left(\sum_{j=0}^\infty \left(2^{\left(
\delta_2+\frac{n}{p}\right)pj}
 b_j\right)^2\right)^{\frac{1}{2}}
\\ \lesssim &
  \left(\sum_{j=0}^\infty 2^{\left( \delta_1+\frac{n}{p}\right)pj}
 a_j\right)\left(\sum_{j=0}^\infty 2^{\left(\delta_2+\frac{n}{p}\right)pj}
 b_j\right)
 \lesssim
  \| u\|_{W_{s_1,\delta_1}^p(\setR^n)}^p\| v\|_{W_{s_2,\delta_2}(\setR^n)}^p.
 \end{split}
\end{equation*}
 
\end{proof}
\begin{cor}
 Let $s>\frac{n}{p}$ and $\delta\geq-\frac{n}{p}$, then the space
$W_{s,\delta}^p$ is  an algebra.
\end{cor}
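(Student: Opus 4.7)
The plan is to deduce the corollary as a direct application of Proposition \ref{prop:4} with $s_1 = s_2 = s$ and $\delta_1 = \delta_2 = \delta$. So the task reduces to verifying that the four hypotheses of that proposition are satisfied under the assumptions $s > \frac{n}{p}$ and $\delta \geq -\frac{n}{p}$.

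First I would check the regularity hypotheses. The condition $s \leq \min\{s_1,s_2\}$ becomes $s \leq s$, which is trivial. The condition $s_1+s_2 > s + \frac{n}{p}$ becomes $2s > s + \frac{n}{p}$, equivalently $s > \frac{n}{p}$, which is exactly our assumption. For the third condition $s_1 + s_2 \geq n\cdot \max\{0,\tfrac{2}{p}-1\}$, I would split into two cases: when $p \geq 2$ the right-hand side is $0$, so the inequality is immediate since $2s > 0$; when $1 < p < 2$ we need $2s \geq \frac{2n}{p} - n$, i.e.\ $s \geq \frac{n}{p} - \frac{n}{2}$, which again follows from $s > \frac{n}{p}$.

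Next I would check the weight condition $\delta \leq \delta_1 + \delta_2 + \frac{n}{p}$, which reads $\delta \leq 2\delta + \frac{n}{p}$, equivalently $\delta \geq -\frac{n}{p}$, again matching the hypothesis.

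With all four conditions verified, Proposition \ref{prop:4} yields the continuous bilinear map
\begin{equation*}
W_{s,\delta}^p(\setR^n) \times W_{s,\delta}^p(\setR^n) \to W_{s,\delta}^p(\setR^n),
\end{equation*}
that is, $\|uv\|_{W_{s,\delta}^p(\setR^n)} \lesssim \|u\|_{W_{s,\delta}^p(\setR^n)} \|v\|_{W_{s,\delta}^p(\setR^n)}$, which is precisely the statement that $W_{s,\delta}^p$ is an algebra. There is no genuine obstacle here; the only point worth care is the case $1 < p < 2$ in the third hypothesis, where one has to observe that $\frac{n}{p} > \frac{n}{p} - \frac{n}{2}$ to absorb the extra term $n(\tfrac{2}{p}-1)$. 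No additional machinery beyond Proposition \ref{prop:4} is required.
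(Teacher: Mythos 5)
Your proposal is correct and is exactly the intended argument: the paper states this as a corollary of Proposition \ref{prop:4} with no separate proof, and your choice $s_1=s_2=s$, $\delta_1=\delta_2=\delta$ together with the routine verification of the four hypotheses (including the case distinction for $n\cdot\max\{0,\tfrac{2}{p}-1\}$) is precisely what is needed.
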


Proposition \ref{prop:4}  can be  extended
to a  multiplication of  three functions and  with relaxed conditions on
the $\delta$'s.  
\begin{prop}
\label{prop:11}
Assume $s\leq \min\{s_1,s_2\}$, $s_1+s_2>s+\frac{n}{p}$,  $s_1+s_2\geq
n\cdot\max\{0,(\frac{2}{p}-1)\}$ and
$\delta\leq\delta_1+\delta_2+\delta_3 + \frac{2n}{p}$, then the multiplication 
\begin{equation*}
 W_{s_1,\delta_1}^p \times  W_{s_2,\delta_2}^p\times
W_{s_2,\delta_3}^p\to 
W_{s,\delta}^p
\end{equation*}
is continuous.
\end{prop}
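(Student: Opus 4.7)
The plan is to adapt the two-factor scheme of Proposition \ref{prop:4} to three factors, using Proposition \ref{prop:eqiv} with $\gamma=3$ in place of $\gamma=2$, and three-term H\"older's inequality in place of Cauchy--Schwarz. By Proposition \ref{prop:eqiv},
\begin{equation*}
\|uvw\|_{W_{s,\delta}^p(\setR^n)}^p \simeq \sum_{j=0}^\infty 2^{(\delta+\frac{n}{p})pj}\bigl\|(\psi_j^3 uvw)_{2^j}\bigr\|_{W_s^p}^p,
\end{equation*}
and factoring $(\psi_j^3 uvw)_{2^j} = (\psi_j u)_{2^j}(\psi_j v)_{2^j}(\psi_j w)_{2^j}$ isolates each function on its own dyadic block.

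Next I would invoke an unweighted trilinear Besov estimate
\begin{equation*}
\|fgh\|_{W_s^p}\lesssim \|f\|_{W_{s_1}^p}\|g\|_{W_{s_2}^p}\|h\|_{W_{s_2}^p},
\end{equation*}
valid under the stated conditions $s\leq\min\{s_1,s_2\}$, $s_1+s_2>s+\tfrac{n}{p}$ and $s_1+s_2\geq n\max\{0,\tfrac{2}{p}-1\}$, which can be extracted from the paraproduct calculus in \cite{runst96:_sobol_spaces_fract_order_nemyt} that was already used in Proposition \ref{prop:4}. Setting $a_j,b_j,c_j$ to be the $p$-th powers of the three unweighted Besov norms obtained by applying this estimate to $(\psi_j u)_{2^j}$, $(\psi_j v)_{2^j}$, $(\psi_j w)_{2^j}$, and rewriting the hypothesis $\delta\leq \delta_1+\delta_2+\delta_3+\tfrac{2n}{p}$ in the equivalent form $(\delta+\tfrac{n}{p})\leq (\delta_1+\tfrac{n}{p})+(\delta_2+\tfrac{n}{p})+(\delta_3+\tfrac{n}{p})$, I can distribute the weight among the three factors and get
\begin{equation*}
\|uvw\|_{W_{s,\delta}^p(\setR^n)}^p \lesssim \sum_{j=0}^\infty \bigl(2^{(\delta_1+\frac{n}{p})pj}a_j\bigr)\bigl(2^{(\delta_2+\frac{n}{p})pj}b_j\bigr)\bigl(2^{(\delta_3+\frac{n}{p})pj}c_j\bigr).
\end{equation*}

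I then apply H\"older on the sequence level with exponents $(3,3,3)$, followed by the elementary inequality $\bigl(\sum A_j^3\bigr)^{1/3}\leq \sum A_j$ valid for nonnegative sequences; this is the exact three-factor analogue of the Cauchy--Schwarz plus $\ell^2\hookrightarrow\ell^1$ step used in the proof of Proposition \ref{prop:4}. Each of the three resulting sums is, by definition, the $p$-th power of one of the weighted norms, giving
\begin{equation*}
\|uvw\|_{W_{s,\delta}^p}^p\lesssim \|u\|_{W_{s_1,\delta_1}^p}^p\|v\|_{W_{s_2,\delta_2}^p}^p\|w\|_{W_{s_2,\delta_3}^p}^p.
\end{equation*}

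The only step requiring genuine care is the unweighted trilinear estimate: when $s_2\leq\tfrac{n}{p}$ the two-factor multiplication of Proposition \ref{prop:4} cannot simply be iterated on $vw$ first, so one must either cite a direct three-factor statement from the Besov multiplication literature or carry out a paraproduct decomposition by hand, checking that $s\leq\min\{s_1,s_2\}$ together with $s_1+s_2>s+\tfrac{n}{p}$ still suffices because the third factor's regularity matches $s_2$. Every other step is a mechanical transcription of the proof of Proposition \ref{prop:4} with $\gamma=3$ and three-term H\"older.
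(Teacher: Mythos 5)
Your argument is essentially the paper's own proof: localize with $\psi_j^3$ via Proposition \ref{prop:eqiv}, apply the trilinear unweighted Besov multiplication estimate from \cite{runst96:_sobol_spaces_fract_order_nemyt}, distribute the weight using $\delta\leq\delta_1+\delta_2+\delta_3+\frac{2n}{p}$, and then pass from a weighted $\ell^1$-type sum to the product of the three weighted norms via H\"older followed by $\ell^r\hookrightarrow\ell^1$ monotonicity. The only cosmetic difference is your choice of H\"older exponents $(3,3,3)$ where the paper uses $(2,4,4)$; both work, and your caveat about the unweighted trilinear estimate is treated at exactly the same level of detail as in the paper.
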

\begin{proof}
Similar to the above proof, by the multiplication 
properties in the Besov
spaces, we have
\begin{equation*}
 \|\left(\psi_j^3 u vw\right)_{2^j}\|_{W_{s}^p}\lesssim  \|\left(\psi_j
u\right)_{2^j}\|_{W_{s_1}^p} \|\left(\psi_j
v\right)_{2^j}\|_{W_{s_2}^p}\|\left(\psi_j w\right)_{2^j}\|_{W_{s_2}^p}.
\end{equation*} 

Let $a_j$ and $b_j$ be as in the previous proof and let
$c_j=\|\left(\psi_j w\right)_{2^j}\|_{W_{s_2}^p}^p$.
Replacing the
Cauchy--Schwarz inequality by the  H\"older inequality, we get that
\begin{equation*}
\begin{split}
\| uvw\|_{W_{s,\delta}^p(\setR^n)}^p  &\lesssim
  \left(\sum_{j=0}^\infty \left(2^{\left( \delta_1+\frac{n}{p}\right)pj}
 a_j\right)^2\right)^{\frac{1}{2}}\left(\sum_{j=0}^\infty \left(2^{\left(
\delta_2+\frac{n}{p}\right)pj}
 b_j\right)^4\right)^{\frac{1}{4}} \left(\sum_{j=0}^\infty \left(2^{\left(
\delta_3+\frac{n}{p}\right)pj}
 c_j\right)^4\right)^{\frac{1}{4}}
\\  &\lesssim 
  \left(\sum_{j=0}^\infty 2^{\left( \delta_1+\frac{n}{p}\right)pj}
 a_j\right)\left(\sum_{j=0}^\infty 2^{\left(\delta_2+\frac{n}{p}\right)pj}
 b_j\right)\left(\sum_{j=0}^\infty 2^{\left(
\delta_3+\frac{n}{p}\right)pj}
 c_j\right)\\ &
 \lesssim
  \| u\|_{W_{s_1,\delta_1}^p(\setR^n)}^p\| 
v\|_{W_{s_2,\delta_2}^p(\setR^n)}^p\|
w\|_{W_{s_2,\delta_3}^p(\setR^n)}^p.
 \end{split}
\end{equation*}

\end{proof}
\begin{prop}
  \label{prop:5a} 
  Let $u\in W_{s,\delta}^p\cap L^\infty$, $1\leq\beta$, $
  0<s<\beta +\frac{1}{p}$ and $\delta\in \mathbb{R}$, then
  \begin{equation*}
  \||u|^\beta\|_{W_{s,\delta}^p(\setR^n)}\leq
    C(\|u\|_{L^\infty})
    \|u\|_{W_{s,\delta}^p(\setR^n)}.
  \end{equation*}
\end{prop}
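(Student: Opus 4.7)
The plan is to reduce the inequality to the unweighted composition theorem for $|\cdot|^\beta$ in the Besov space $W_s^p=B_{p,p}^s$, exploiting the non-negativity of the dyadic cut-offs $\psi_j$. The key identity is
\begin{equation*}
\psi_j^\beta |u|^\beta=|\psi_j u|^\beta,
\end{equation*}
which, combined with the elementary scaling rule, yields $(\psi_j^\beta|u|^\beta)_{(2^j)}=|(\psi_ju)_{(2^j)}|^\beta$. Applying Proposition~\ref{prop:eqiv} with $\gamma=\beta\geq 1$ to the distribution $|u|^\beta$ therefore gives
\begin{equation*}
\||u|^\beta\|_{W_{s,\delta}^p(\setR^n)}^p\simeq \sum_{j=0}^\infty 2^{(\delta+\frac{n}{p})pj}\bigl\|\,|v_j|^\beta\bigr\|_{W_s^p}^p,\qquad v_j:=(\psi_ju)_{(2^j)}.
\end{equation*}

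The problem thus reduces to a uniform-in-$j$ bound $\|\,|v_j|^\beta\|_{W_s^p}\leq C(\|u\|_{L^\infty})\,\|v_j\|_{W_s^p}$; summing against the weight $2^{(\delta+n/p)pj}$ would then immediately yield $\||u|^\beta\|_{W_{s,\delta}^p}\leq C(\|u\|_{L^\infty})\|u\|_{W_{s,\delta}^p}$, using $\|v_j\|_{L^\infty}\leq \|\psi_j\|_{L^\infty}\|u\|_{L^\infty}\leq \|u\|_{L^\infty}$. I would obtain this bound by invoking the classical fractional-power theorem in the unweighted Besov space: for $\beta\geq 1$ and $0<s<\beta+\tfrac{1}{p}$, the substitution operator $v\mapsto|v|^\beta$ maps $W_s^p\cap L^\infty$ into $W_s^p$ with the stated Moser-type bound; see the composition results in Runst and Sickel \cite{runst96:_sobol_spaces_fract_order_nemyt}, which are already invoked in the proof of Proposition~\ref{prop:4}. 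The condition $s<\beta+\tfrac{1}{p}$ reflects exactly the regularity of $t\mapsto|t|^\beta$, which is $C^1$ with a H\"older derivative of order $\beta-1$.

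The main step, and hence the main obstacle should one insist on a self-contained argument, is the unweighted estimate. All the weighted machinery is absorbed into the pointwise identity $\psi_j^\beta|u|^\beta=|\psi_ju|^\beta$ and the freedom to vary the exponent $\gamma$ in the equivalent norm of Proposition~\ref{prop:eqiv}; once this bookkeeping is done, no further interaction between the weight $(1+|x|)^\delta$ and the nonlinear operation is needed. If necessary, the unweighted bound itself could be verified by splitting $0<s<1$ (using the Gagliardo seminorm from Theorem~\ref{thm:2} together with the pointwise Lipschitz estimate $\bigl|\,|a|^\beta-|b|^\beta\bigr|\leq\beta\|u\|_{L^\infty}^{\beta-1}|a-b|$) from $s\geq 1$ (where the chain rule and the multiplication estimate of Proposition~\ref{prop:4} deliver the required control on derivatives of $|v_j|^\beta$).
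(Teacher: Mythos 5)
Your proposal is correct and follows essentially the same route as the paper: the identity $(\psi_j^\beta|u|^\beta)_{(2^j)}=|(\psi_ju)_{(2^j)}|^\beta$, the equivalent norm of Proposition~\ref{prop:eqiv} with $\gamma=\beta$, and a term--wise application of the unweighted fractional--power estimate with constant controlled by $\|u\|_{L^\infty}$. The only difference is the reference for the unweighted bound: the paper cites Bourdaud--Meyer for $\beta=1$ and Kateb for $\beta>1$ (the latter giving the sharp range $s<\beta+\frac{1}{p}$), rather than Runst--Sickel.
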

\begin{proof}
The unweighted inequality, 
\begin{equation}
\label{eq:kateb} 
\||u|^\beta\|_{W_{s}^p}\leq
C(\|u\|_{L^\infty})
    \|u\|_{W_{s}^p}.
\end{equation}
was proven by Bourdaud and  Meyer \cite{Bourdaud_Meyer} for $\beta=1$ and by
Kateb \cite{kateb03:_besov} for $1<\beta$. Applying (\ref{eq:kateb}) term--wise
to the equivalent norm (\ref{eq:norm8}), we get
\begin{equation*}
    \begin{split}
     \| |u|^\beta\|_{W_{s,\delta}(\setR^n)^p}^p  &\simeq
    \sum_{j=0}^\infty 2^{( \delta+\frac{n}{p})pj}
    \| (\psi_j^\beta |u|^\beta)_{2^j} \|_{W_{s}^p}^{p}\\ & \leq
\left(
    C(\|u\|_{L^\infty})\right)^p
    \sum_{j=0}^\infty 2^{(  \delta+\frac{n}{p})pj}
    \| (\psi_j u)_{2^j}\|_{W_{s}^p}^{p}\leq \left(
    C(\|u\|_{L^\infty})\right)^p \| u \|_{W_{s,\delta}^p(\setR^n)}^p.
    \end{split}
\end{equation*}
 
\end{proof}
\begin{prop}
 \label{prop:Moser}
Let $F:\setR^m\to\setR^l$ be a $C^{N+1}$ function such that $F(0)=0$ and 
$0<s\leq N$.  Then there exists a positive constant $C$ such that  
\begin{equation}
\label{eq:Moser:2}
 \|F(u)\|_{W_{s,\delta}^p(\setR^n)}\leq C
\|F\|_{C^{N+1}}\left(1+\|u\|_{L^\infty}^N\right)\|u\|_{W_{s,\delta}^p(\setR^n)}
\end{equation} 
for any $u\in
W_{s,\delta}^p(\setR^n)\cap L^\infty(\setR^n)$.
In particular, if $s>\frac{n}{p}$ and  $\delta\geq-\frac{n}{p}$, then
\begin{equation}
\label{eq:Moser}
 \|F(u)\|_{W_{s,\delta}^p(\setR^n)}\leq C
\|u\|_{W_{s,\delta}^p(\setR^n)}.
\end{equation}
\end{prop}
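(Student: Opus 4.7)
The plan is to transfer the known unweighted Moser-type inequality in the Besov space $W_s^p$ to the weighted scale via the dyadic-shell definition (\ref{eq:norm1}), in exactly the spirit of the proof of Proposition~\ref{prop:5a}. The unweighted estimate, due to Bourdaud--Meyer for $N=1$ and Kateb for $N>1$ (as used already in the proof of (\ref{eq:kateb}), with the relevant references in Runst--Sickel), reads
$$\|F(v)\|_{W_s^p(\setR^n)}\leq C\,\|F\|_{C^{N+1}}\bigl(1+\|v\|_{L^\infty}^N\bigr)\|v\|_{W_s^p(\setR^n)}$$
for $0<s\leq N$ and $F\in C^{N+1}$ with $F(0)=0$; this is the inequality I would invoke term-wise on each dyadic piece.

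The algebraic obstacle is that, unlike in the pure-power case treated in Proposition~\ref{prop:5a}, one does \emph{not} have $\psi_j F(u)=F(\psi_j u)$ for a general nonlinearity $F$. I would circumvent this by a standard localization trick: fix a thickened family $\{\tilde\psi_j\}\subset C_0^\infty(\setR^n)$ supported in slightly enlarged dyadic shells, with $\tilde\psi_j\equiv 1$ on $\supp(\psi_j)$, and still obeying the derivative bound $|\partial^\alpha \tilde\psi_j|\leq C_\alpha 2^{-|\alpha|j}$ of (\ref{eq:const:4}). Since $F(0)=0$, the pointwise identity $\psi_j(x)F(u(x))=\psi_j(x)F(\tilde\psi_j(x)u(x))$ holds on all of $\setR^n$, and after the dilation $x\mapsto 2^j y$ becomes
$$(\psi_j F(u))_{(2^j)}(y)=\chi_j(y)\,F\bigl((\tilde\psi_j u)_{(2^j)}(y)\bigr),$$
where $\chi_j(y):=\psi_j(2^j y)$ has derivatives bounded \emph{uniformly in $j$} by (\ref{eq:const:4}).

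I would then combine Proposition~\ref{prop:mult-smooth} (multiplication by the smooth, uniformly bounded factor $\chi_j$) with the unweighted Moser estimate applied to $(\tilde\psi_j u)_{(2^j)}$, together with the trivial bound $\|(\tilde\psi_j u)_{(2^j)}\|_{L^\infty}\leq\|u\|_{L^\infty}$, to obtain
$$\bigl\|(\psi_j F(u))_{(2^j)}\bigr\|_{W_s^p}\;\lesssim\;\|F\|_{C^{N+1}}\bigl(1+\|u\|_{L^\infty}^N\bigr)\bigl\|(\tilde\psi_j u)_{(2^j)}\bigr\|_{W_s^p}.$$
Raising to the $p$-th power, weighting by $2^{(\delta+n/p)pj}$, summing over $j$, and recognising the resulting right-hand side as an equivalent $W_{s,\delta}^p$-norm via Theorem~\ref{thm1}(a) applied to $\{\tilde\psi_j\}$ (which, like the families $\{\psi_j^\gamma\}$ of Proposition~\ref{prop:eqiv}, still satisfies (\ref{eq:const:4})) delivers (\ref{eq:Moser:2}).

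The ``in particular'' case is then immediate: when $s>n/p$ and $\delta\geq -n/p$, Proposition~\ref{prop:2}(c) (with $m=0$ and $\beta=0$) gives $\|u\|_{L^\infty}\leq C\|u\|_{W_{s,\delta}^p(\setR^n)}$, so the factor $1+\|u\|_{L^\infty}^N$ is absorbed into a constant $C$ depending on $\|u\|_{W_{s,\delta}^p}$, yielding (\ref{eq:Moser}). The only genuinely delicate point of the program is verifying the uniform-in-$j$ multiplier property of $\chi_j$; everything else is the same dyadic-shell bookkeeping already carried out in Propositions~\ref{prop:5a}, \ref{prop:4}, and~\ref{prop:11}.
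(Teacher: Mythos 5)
Your strategy is the same as the paper's: localize $u$ so that the unweighted Bourdaud--Meyer/Kateb--Runst--Sickel estimate can be applied shell by shell after the dilation $x\mapsto 2^jx$, and then resum with the weights $2^{(\delta+n/p)pj}$. Where the paper writes $u=\sum_k\Psi_k u$ with $\Psi_k=\varphi^{-1}\psi_k$ and uses the truncated sum $\sum_{k=j-2}^{j+1}\Psi_k u$ in (\ref{eq:norm11}), you use a single thickened cutoff $\tilde\psi_j$ with $\tilde\psi_j\equiv1$ on $\supp(\psi_j)$; your uniform multiplier bound for $\chi_j(y)=\psi_j(2^jy)$ is exactly the mechanism of Proposition \ref{prop:mult-smooth}, and is the easy part (it follows at once from (\ref{eq:const:4})), not the delicate one. (A cosmetic point: $F(0)=0$ is not what makes the identity $\psi_jF(u)=\psi_jF(\tilde\psi_ju)$ hold; it is what makes the unweighted Moser estimate applicable to $F(\tilde\psi_ju)$.)

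The one step that needs repair is the final appeal to Theorem \ref{thm1}(a). The family $\{\tilde\psi_j\}$ is not an admissible dyadic resolution in the paper's sense: since $\tilde\psi_j\equiv1$ on $\supp(\psi_j)$, its support necessarily protrudes beyond the prescribed shell $\{2^{j-2}\leq|x|\leq2^{j+1}\}$, so the quoted equivalence does not literally apply (contrast with $\{\psi_j^\gamma\}$ in Proposition \ref{prop:eqiv}, which has the same support as $\psi_j$). Moreover you only need the one--sided bound
\begin{equation*}
\sum_{j}2^{(\delta+\frac{n}{p})pj}\bigl\|(\tilde\psi_j u)_{(2^j)}\bigr\|_{W_s^p}^p
\;\lesssim\;\|u\|_{W_{s,\delta}^p(\setR^n)}^p .
\end{equation*}
To get it, write $\tilde\psi_j u=\tilde\psi_j\,\varphi^{-1}\sum_{k=j-c}^{j+c}\psi_k u$ for a fixed integer $c$ (only finitely many shells meet $\supp(\tilde\psi_j)$), and use the dilation--shift estimate the paper records explicitly,
\begin{equation*}
\bigl\|(\psi_k u)_{(2^j)}\bigr\|_{W_s^p}\;\lesssim\;2^{(k-j)n/p}\,2^{cs}\,\bigl\|(\psi_k u)_{(2^k)}\bigr\|_{W_s^p},\qquad |k-j|\leq c,
\end{equation*}
together with the uniform multiplier bound for $\tilde\psi_j\varphi^{-1}$. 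Summing over $j$ then costs only a fixed factor, exactly as in the paper's final display. With that substitution your argument is complete and coincides with the paper's proof in all essentials.
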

\begin{proof}
The Moser type estimate 
\begin{equation}
\label{eq:norm12}
 \|F(u)\|_{W_{s}^p}\leq C
\|F\|_{C^{N+1}}\left(1+\|u\|_{L^\infty}^N\right)\|u\|_{W_{s}^p}, 
\end{equation} 
in the Besov spaces was proven in
\cite[\S5.3.4]{runst96:_sobol_spaces_fract_order_nemyt}.
Let $\{\psi_j\}$ be the dyadic resolution of unity used in the
definition of the norm (\ref{eq:norm1}) and set $\Psi_j(x)=
(\varphi(x))^{-1}\psi_j(x)$, where $\varphi(x)=\sum_{j=0}^\infty
\psi_j(x)$. Then   the sequence
$\{\Psi_j\}\subset C_0^\infty(\setR^n)$, satisfies (\ref{eq:const:4}) and
$\sum_{j=0}^\infty\Psi_j(x)=1$.  Since $F(0)=0$, we obtain
\begin{equation}
\label{eq:norm11}
 \left(\psi_jF(u)\right)_{2^j}=
 \left(\psi_jF\left(\sum_{k=0}^{\infty}\Psi_ku\right)\right)_{2^j }=
 \left(\psi_jF\left(\sum_{k=j-2}^{j+1}\Psi_ku\right)\right)_{2^j},
\end{equation}
for each $j$. Here we use the convention that a summation starts
from zero whenever $k<0$.
By the Moser type estimate (\ref{eq:norm12}), we have
\begin{equation}
\label{eq:norm15}
 \left\|\left(\psi_jF\left(\sum_{k=j-2}^{j+1}\Psi_ku\right)\right)_{2^j 
}\right\|_{W_s^p}\leq C 
\left\|\left(\sum_{k=j-2}^{j+1}\Psi_ku\right)_{2^j}\right\|_{W_s^p}
\leq C 
\sum_{k=j-2}^{j+1}\left\|\left(\Psi_ku\right)_{2^j}\right\|_{W_s^p},
\end{equation} 
where $C=C(\|F\|_{C^{N+1}}, \|u\|_{L^\infty})$. Taking into account the known 
 scaling properties of the Besov's norm and the form of $\Psi_k$, we get that 
\begin{equation}
  \left\|\left(\Psi_ku\right)_{2^j}\right\|_{W_s^p}=
  \left\|\left(\left(\Psi_ku\right)_{2^k}\right)_{2^{j-k}}\right\|_{W_s^p}
  \lesssim 
2^{(k-j)n/p}2^{2s}\left\|\left(\Psi_ku\right)_{2^k}
\right\|_{W_s^p}.
\end{equation}
Combining  (\ref{eq:norm11}), (\ref{eq:norm15}) with inequality
 $\left\|\left(\Psi_ku\right)_{
2^k }\right\|_{W_s^p}\leq C\left\|\left(\psi_ku\right)_{
2^k }\right\|_{W_s^p}$, we obtain that
\begin{equation*}
 \begin{split}
 &\ \ \  \left\|F(u)\right\|_{W_{s,\delta}^p(\setR^n)}^p =\sum_{j=0}^\infty
2^{(\delta+\frac{n}{p})pj}\left\|\left(\psi_j
F(u)\right)_{2^j}\right\|_{W_s^p}^p
\\ & \leq
\left(C(\|F\|_{C^{N+1}},\|u\|_{L^\infty}^N)\right)^p
\sum_{j=0}^\infty 2^{(\delta+\frac{n}{p})pj}\sum_{k=j-2}^{j+1}
2^{(k-j)n}\left\|\left(\psi_k
u\right)_{2^k}\right\|_{W_{s}^p}^p
\\ &\leq 4
\left(C(\|F\|_{C^{N+1}},\|u\|_{L^\infty}^N)\right)^p
\sum_{k=0}^{\infty} 2^{(\delta+\frac{n}{p})pk}
\left\|\left(\psi_ku\right)_{2^k}\right\|_{W_{s}^p}^p
\\ & = 4
\left(C(\|F\|_{C^{N+1}},\|u\|_{L^\infty}^N)\right)^p
\left\|u\right\|_{W_{s,\delta}^p(\setR^n)}^p.
 \end{split}
\end{equation*}
When $s>\frac{n}{p}$ and  $\delta\geq\frac{n}{p}$, then (\ref{eq:Moser}) follows
from Proposition
\ref{prop:2}(c).
 
\end{proof}

\section{Linear Elliptic Systems on Asymptotically Flat Riemannian Manifolds}
\label{sec:linear_asymp}
In this section we study second order linear elliptic systems whose
coefficients are in the weighted Besov spaces. We emphasize the study
of operators with the Laplace Beltrami operator of an asymptotically flat
Riemannian manifold as the principal part.  The range of $\delta$ is
restricted to the interval $(-\frac{n}{p},-2+\frac{n}{p'})$, since for
these values of $\delta$ the Laplace operator is an isomorphism
between $W_{s,\delta}^p(\setR^n)$ and $W_{s-2,\delta+2}^p(\setR^n)$.

\subsection{Linear elliptic operators in $\setR^n$}
\label{sec:ellipt-oper-w_s}
We consider second order  linear elliptic systems  of the form
\begin{equation}
\label{eq:ellptic:1}
(L u)^i = (a_2)_{ij}^{ab}\partial_a\partial_bu^j+(a_1)_{ij}^a\partial_a u^j 
+(a_0)_{ij}u^j,
\end{equation} 
where $a_k$ are $N\times N$ block matrices. 
The operator $L$ is elliptic  in $\setR^n$ if 
\begin{equation}
\label{eq:elliptic11}
 \det\left((a_2)_{ij}^{ab}(x)\xi_a\xi_b\right)\neq0\qquad \text{ for all}\
 \xi\in \setR^n\setminus\{0\}  \ \text{and}\ x\in\setR^n.
\end{equation} 
Let $A_\infty$ be a matrix with constant coefficients, the symbol $A_\infty $
stands also
for a second order differential operator  of the form
\begin{equation}
\label{eq:elliptic:2}
(A_\infty 
u)^i=(A_\infty)_{ij}^{ab}\partial_a\partial_b 
u^j.
\end{equation} 
We  assume
$\det\left(\left(A_\infty\right)_{ij}^{ab}\xi_a\xi_b\right)\neq0$, hence
$A_\infty$ is an elliptic operator.
\begin{defn}
 \label{def:asy}
We say that  operator $L$ belongs ${\bf 
Asy}(A_\infty,s,\delta,p)$ if
\begin{equation}
\label{eq:elliptic3}
 a_2-A_\infty\in W_{s,\delta}^p(\setR^n), \quad a_1\in
W_{s-1,\delta+1}^p(\setR^n)
\quad\text{and} \quad a_0\in
W_{s-2,\delta+2}^p(\setR^n).
\end{equation}  
\end{defn}

The following Corollary is a consequence of Propositions \ref{prop:1} and
\ref{prop:4}. 
\begin{cor}
 \label{cor:elliptic1}
Let $L\in({\bf 
Asy}A_\infty,s,\delta,p)$, $s\in(\frac{n}{p},\infty)\cap
[1,\infty)$, $\delta\in 
[-\frac{n}{p},\infty)$ and
$p\in(1,\infty)$, then 
\begin{equation*}
 L:W_{s,\delta}^p(\setR^n)\to W_{s-2,\delta+2}^p(\setR^n)
\end{equation*}  
is a bounded operator.
\end{cor}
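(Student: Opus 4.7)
The plan is to split $L$ into its constant-coefficient principal part together with three perturbations,
\[
(Lu)^i = (A_\infty)_{ij}^{ab}\partial_a\partial_b u^j + \bigl((a_2)_{ij}^{ab}-(A_\infty)_{ij}^{ab}\bigr)\partial_a\partial_b u^j + (a_1)_{ij}^a\partial_a u^j + (a_0)_{ij}u^j,
\]
and to estimate each summand separately in the $W_{s-2,\delta+2}^p(\setR^n)$ norm, combining the two tools already available: the derivative bound of Proposition~\ref{prop:1} and the multiplication inequality of Proposition~\ref{prop:4}.

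For the constant-coefficient term, two successive applications of Proposition~\ref{prop:1} give $\partial_a\partial_b u^j\in W_{s-2,\delta+2}^p(\setR^n)$ with norm bounded by a constant multiple of $\|u\|_{W_{s,\delta}^p(\setR^n)}$; multiplication by the constant entries of $A_\infty$ only costs a fixed factor. For each of the three remaining products I would invoke Proposition~\ref{prop:4} with target indices $(s_{\rm tar},\delta_{\rm tar})=(s-2,\delta+2)$. In every case the two factor regularities sum to $2s-2$, which under $s>\frac{n}{p}$ with $n\geq 2$ exceeds $(s-2)+\frac{n}{p}$ and dominates $n\cdot\max\{0,\frac{2}{p}-1\}$; the factor weights sum to $2\delta+2$, so the weight condition $\delta_{\rm tar}\leq \delta_1+\delta_2+\frac{n}{p}$ reduces to $\delta\geq -\frac{n}{p}$, which is exactly the hypothesis.

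Concretely, for $(a_2-A_\infty)\partial^2 u$ the factors lie in $W_{s,\delta}^p$ and $W_{s-2,\delta+2}^p$; for $a_1\partial u$ both factors lie in $W_{s-1,\delta+1}^p$ (the derivative of $u$ by Proposition~\ref{prop:1}); and for $a_0\, u$ the factors lie in $W_{s-2,\delta+2}^p$ and $W_{s,\delta}^p$. In each instance Proposition~\ref{prop:4} places the product in $W_{s-2,\delta+2}^p(\setR^n)$ with a bound by the product of the factor norms. Summing the four contributions yields
\[
\|Lu\|_{W_{s-2,\delta+2}^p(\setR^n)}\lesssim \|u\|_{W_{s,\delta}^p(\setR^n)}.
\]
The argument is essentially bookkeeping; the only substantive point is that $s>\frac{n}{p}$, $\delta\geq -\frac{n}{p}$, and $n\geq 2$ are precisely what is needed to verify the hypotheses of Proposition~\ref{prop:4} in each of the three products, so no serious obstacle is expected.
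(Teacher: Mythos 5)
Your proof is correct and follows exactly the route the paper intends: the paper gives no separate argument but states that the corollary "is a consequence of Propositions \ref{prop:1} and \ref{prop:4}", which is precisely your decomposition into the constant-coefficient principal part plus the three lower-order products, with the hypotheses of Proposition \ref{prop:4} verified term by term (and your index bookkeeping, including the reduction of the weight condition to $\delta\geq-\frac{n}{p}$ and the use of $n\geq 2$ when $p<2$, checks out).
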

\begin{lem}
 \label{lem:elliptic1}
 Let $\delta\in (-\frac{n}{p}, -2+\frac{n}{p'})$ and $p\in(1,\infty)$,
 $s\in\setR$, 
 then the operator
 \begin{equation}
\label{eq:ellliptic2}
A_\infty :W_{s,\delta}^p(\setR^n)\to W_{s-2,\delta+2}^p(\setR^n) 
\end{equation} 
is an isomorphism.
\end{lem}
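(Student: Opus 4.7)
The plan is to invoke Triebel's isomorphism theorem for the scalar Laplacian on weighted Besov spaces, extend it to the constant-coefficient matrix system $A_\infty$ via a Fourier multiplier factorization, and cover the full range $s\in\setR$ by real interpolation and duality.

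First, for the scalar Laplacian, Triebel \cite{triebel76:_spaces_kudrj2} has established that $\Delta:W_{s,\delta}^p(\setR^n)\to W_{s-2,\delta+2}^p(\setR^n)$ is an isomorphism for every $s\in\setR$ and every $\delta$ in the prescribed interval. The weight range is optimal because it excludes precisely the indicial exponents $0$ and $n-2$ of $\Delta$ at infinity in $\setR^n$, the only rates at which homogeneous harmonic functions can obstruct surjectivity.

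To pass to the matrix system $A_\infty$, I would factor on the symbol level. Writing $P(\xi) = -(A_\infty)_{ij}^{ab}\xi_a\xi_b$, the ellipticity hypothesis (\ref{eq:elliptic11}) gives $\det P(\xi)\neq 0$ for all $\xi\neq 0$, so $M(\xi):=-|\xi|^2 P(\xi)^{-1}$ is a matrix-valued symbol that is smooth on $\setR^n\setminus\{0\}$ and homogeneous of degree zero, and $M^{-1}$ has the same properties. Formally $A_\infty=\Delta\circ\mathcal{T}_{M^{-1}}$, where $\mathcal{T}_M$ denotes the Fourier multiplier with symbol $M$; so it suffices to show that any such zero-order homogeneous matrix Fourier multiplier is an isomorphism of $W_{s,\delta}^p(\setR^n)$.

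The boundedness of zero-order multipliers on the weighted space is the main technical step. In the unweighted case Mikhlin--H\"ormander yields boundedness of $\mathcal{T}_M$ on $W_s^p$. To pass to $W_{s,\delta}^p$, observe that homogeneity of $M$ of degree zero makes $\mathcal{T}_M$ scale invariant, $\mathcal{T}_M(u_{(\lambda)})=(\mathcal{T}_M u)_{(\lambda)}$, which meshes perfectly with the scaling built into the norm (\ref{eq:norm1}). Working dyadic block by block, the commutators $[\mathcal{T}_M,\psi_j]$ are controlled by the rapid off-diagonal decay of the convolution kernel associated with $M$, leaving only the near-diagonal interaction between adjacent dyadic shells, which is handled by almost-orthogonality. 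Summing the scales yields the desired bound on $W_{s,\delta}^p$.

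Finally, the full range $s\in\setR$ is obtained by real interpolation (Theorem \ref{thm1}(d)) combined with duality (Theorem \ref{thm1}(c)). The adjoint of $A_\infty$ with respect to $\langle\cdot,\cdot\rangle_W$ is again a constant-coefficient elliptic operator of the same form, and the transformation $p\leftrightarrow p'$, $\delta\mapsto -\delta-2$ maps the admissible interval to itself, so the dual isomorphism is a statement of the same type. The main obstacle is the weighted Mikhlin step above: translation invariance is lost in $W_{s,\delta}^p$ and the classical multiplier theorem does not transfer off the shelf, so one must exploit both the homogeneity of $M$ and the intrinsic dyadic scaling of (\ref{eq:norm1}) to reduce scale by scale to the unweighted theorem.
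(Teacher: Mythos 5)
There is a genuine gap, and it sits exactly at the point your argument leans on hardest. First, the base case you invoke is not available: Triebel's papers on these weighted spaces establish the norms, equivalent norms, duality and interpolation (Theorem \ref{thm1}), but they do \emph{not} prove that $\Delta:W_{s,\delta}^p(\setR^n)\to W_{s-2,\delta+2}^p(\setR^n)$ is an isomorphism, let alone for all real $s$. The isomorphism for constant--coefficient elliptic operators in weighted Sobolev spaces is a Nirenberg--Walker/McOwen/Lockhart--McOwen type result, and it is stated there for integer smoothness; the passage to all real $s$ in $W_{s,\delta}^p$ is precisely the content of the lemma, not something you may assume. (The paper's own route takes Lockhart--McOwen's Theorem 3 for the \emph{matrix} system with integer $s\geq 2$ as input, then gets $s\geq 2$ by interpolation and $s\leq 0$ by passing to the adjoint, using that $\delta\mapsto-\delta-2$ maps the admissible interval for $p$ onto the one for $p'$, and interpolates once more. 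Since the cited input already covers systems, your factorization $A_\infty=\Delta\circ\mathcal{T}_{M^{-1}}$ buys nothing that the literature does not already give.)

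Second, the step you flag as "the main technical step" is asserted with a mechanism that fails. The convolution kernel of a zero--order homogeneous multiplier decays only like $|x-y|^{-n}$, not rapidly, so the interactions between dyadic shells $j$ and $k$ with $|j-k|$ large decay only polynomially; their summability against the weights $2^{(\delta+\frac{n}{p})pj}$ is exactly where the restriction $\delta\in(-\frac{n}{p},-2+\frac{n}{p'})$ must enter, and your sketch never uses it. Likewise, commuting derivatives through $\mathcal{T}_{M^{-1}}$ and applying weighted Calder\'on--Zygmund theory term by term requires the weight $(1+|x|)^{(\delta+|\alpha|)p}$ to be Muckenhoupt $A_p$, which fails for $|\alpha|$ large; so "boundedness of zero--order multipliers on $W_{s,\delta}^p$" is an unproved lemma of essentially the same depth as the statement itself (and you would additionally need $\mathcal{T}_{M^{-1}}$ to be invertible on the space, i.e.\ the same claim for $\mathcal{T}_M$). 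Your interpolation-plus-duality endgame is sound and coincides with the paper's, but as written it rests on a base case that is misattributed and on a multiplier bound whose proof sketch would not go through.
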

\begin{proof}(of Lemma \ref{lem:elliptic1})
{For an integer $s$ that is
greater or equal two the   isomorphism of system
(\ref{eq:ellliptic2}) 
was proven by Lockhart  and McOwen} 
 \cite[Theorem 3]{Lockhart_McOwen}\footnote{ Lockhart and
    McOwen considered the Douglis Nirenberg elliptic system, which is
    an extension of the common elliptic systems (\ref{eq:ellptic:1}).
The ellipticity of this system is expressed by means of two $N$ tuples
$(s_1,\ldots,s_N)$ and $(t_1,\ldots,t_N)$ of nonnegative integers.
Setting $s_i=m$ and $t_i=m+2$ for $i=1,\ldots,N$, reduces the constant
coefficients Douglis Nirenberg system of \cite{Lockhart_McOwen} to
(\ref{eq:elliptic:2}).}
Hence, by interpolation, Theorem \ref{thm:Triebel}(d), it is an
isomorphism for all $s\geq 2$.

For $s\leq 2$ we shall consider the adjoint operator. 
Let $u$ and $\varphi$ be two smooth functions, then  
$((A_\infty)_{ij}^{ab}\partial_a\partial_b
u,\varphi)_{L^2(\setR^n)}=(u,(A_\infty)_{ji}^{ba}\partial_a\partial_b\varphi)_{
L^2(\setR^n)} $, and by Proposition \ref{prop:dual2} and approximation, 
\begin{math}
\langle (A_\infty)_{ij}^{ab}\partial_a\partial_b
u,\varphi\rangle_W=\langle u,(A_\infty)_{ji}^{ba}
\partial_a\partial_b\varphi\rangle_W
\end{math}. Thus letting $A^\ast_\infty$ denote the adjoint matrix of
$A_\infty$, we
conclude from Theorem \ref{thm:Triebel} (c) that 
 \begin{equation}
\label{eq:ellliptic3}
A_\infty^\ast:W_{-s+2,-\delta-2}^{p^\prime}(\setR^n)\to
W_{-s,-\delta}^{p^\prime}(\setR^n) 
\end{equation}  
is the adjoint operator of (\ref{eq:ellliptic2}). 
Note  that 
$-\frac{n}{p^\prime}<-\delta-2<-2+\frac{n}{p}$,  so the previous part implies
that
(\ref{eq:ellliptic3}) is an isomorphism for $s\leq 0$. 
Since the adjoint of an isomorphism   is also an
isomorphism (see e.g. \cite[Theorem 5.15]{Schechter}), we conclude that
(\ref{eq:ellliptic2}) is an isomorphism for all negative integers, and by
interpolation for all $s$.
\end{proof}
    
In order to prove \textit{a priori} estimates for 
$L\in ({\bf A}_\infty,s,\delta,p)$ we need the corresponding 
result in the unweighted 
Besov spaces. The following Lemma was proven in
\cite[Lemma 32]{Holst_Nagy_Tsogtgerel}.
\begin{lem}[Holst, Nagy and Tsogtgerel]
\label{lem:3}
Assume that the coefficients of $L$
satisfy the conditions: $a_i\in W_{s-i}^p$
for $i=0,1,2$, $s\in(\frac{n}{p},\infty)\cap [1,\infty)$ and
$p\in(1,\infty)$, and (\ref{eq:elliptic11}).
Then for all $u\in W_s^p$ with support in a compact set $K$, there
exists 
a constant $C$ that depends on $K$ and the
$W_{s-i}^p$-norms of the coefficients $a_i$ such that
\begin{equation}
 \left\|u\right\|_{W_s^p}\leq C\left\{ \left\|Lu\right\|_{W_{s-2}^p}+
\left\|u\right\|_{W_{s-1}^p}\right\}.
\end{equation}
\end{lem}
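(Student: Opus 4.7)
The plan is to follow the classical freezing-the-coefficients strategy, transferred from Sobolev to Besov spaces, with the multiplication / Moser-type estimates of Section~2 as the technical engine. First I would establish the constant coefficient version: for a fixed $x_0$, the frozen operator $A_{x_0}^{ab}\partial_a\partial_b$ (with constant coefficients $A_{x_0}^{ab}=(a_2)^{ab}(x_0)$) is elliptic by \eqref{eq:elliptic11}, so by Fourier multiplier theory in $B_{p,p}^s=W_s^p$ (equivalently, a special case of Lockhart--McOwen in the unweighted setting) one has
\begin{equation*}
\|u\|_{W_s^p}\lesssim \|A_{x_0}u\|_{W_{s-2}^p}+\|u\|_{L^p}
\end{equation*}
for all $u$ supported in a fixed bounded set.

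Next I would localize. Because $s>\frac{n}{p}$, the embedding $W_s^p\hookrightarrow C^0$ (Proposition \ref{prop:2}(c)) gives continuity of $a_2$, hence given $\varepsilon>0$ one can cover $K$ by finitely many balls $B_{r}(x_k)$ on which $|a_2(x)-a_2(x_k)|\le\varepsilon$. Choose a subordinate partition $\{\chi_k\}\subset C_0^\infty$ with $\sum\chi_k=1$ on a neighbourhood of $K$, and write, for $u$ supported in $K$,
\begin{equation*}
L(\chi_k u)=A_{x_k}(\chi_k u)+\bigl((a_2-a_2(x_k))\partial^2(\chi_k u)\bigr)+\chi_k(a_1\partial u+a_0 u)+[L,\chi_k]u.
\end{equation*}
Applying the frozen estimate to $\chi_k u$, summing in $k$, and inserting the decomposition yields
\begin{equation*}
\|u\|_{W_s^p}\lesssim \|Lu\|_{W_{s-2}^p}+\sum_k\|(a_2-a_2(x_k))\partial^2(\chi_k u)\|_{W_{s-2}^p}+\|a_1\partial u\|_{W_{s-2}^p}+\|a_0 u\|_{W_{s-2}^p}+\|u\|_{W_{s-1}^p},
\end{equation*}
since the commutator $[L,\chi_k]$ is of order one and contributes only to the $\|u\|_{W_{s-1}^p}$ term after Proposition \ref{prop:1}.

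The main obstacle is the coefficient-freezing remainder $(a_2-a_2(x_k))\partial^2(\chi_k u)$. I would bound it by the pointwise multiplication estimate of Proposition \ref{prop:4} applied in the unweighted case ($\delta=0$), which gives
\begin{equation*}
\|(a_2-a_2(x_k))\partial^2(\chi_k u)\|_{W_{s-2}^p}\lesssim \|a_2-a_2(x_k)\|_{W_s^p(B_r(x_k))}\,\|\chi_k u\|_{W_s^p}
\end{equation*}
(using that $s>\frac{n}{p}$ and $s+(s-2)>s-2+\frac{n}{p}$), and by taking $r$ small the local $W_s^p$-norm of the oscillation can be made arbitrarily small; up to a lower-order error controlled by $\|u\|_{W_{s-1}^p}$ we can absorb the $\varepsilon\|u\|_{W_s^p}$ term into the left side. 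The lower-order pieces $\|a_1\partial u\|_{W_{s-2}^p}$ and $\|a_0 u\|_{W_{s-2}^p}$ are handled by the same multiplication proposition: since $a_1\in W_{s-1}^p$ with $s-1+(s-1)>s-2+\frac{n}{p}$, we get $\|a_1\partial u\|_{W_{s-2}^p}\lesssim\|a_1\|_{W_{s-1}^p}\|u\|_{W_{s-1}^p}$, and similarly $\|a_0u\|_{W_{s-2}^p}\lesssim\|a_0\|_{W_{s-2}^p}\|u\|_{W_s^p}$ — the latter produces another term of the form $C'\|u\|_{W_s^p}$, which is the reason $r$ (hence $\varepsilon$ and the implicit constant multiplying the top-order norm) must be tuned together with $\|a_0\|_{W_{s-2}^p}$ before absorbing. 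Once absorption is carried out we obtain the claimed estimate with a constant depending on $K$ and on the $W_{s-i}^p$-norms of the coefficients.
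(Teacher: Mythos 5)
The paper itself offers no proof of this lemma; it is quoted verbatim from Holst--Nagy--Tsogtgerel \cite[Lemma 32]{Holst_Nagy_Tsogtgerel}, so your argument has to be judged against the standard proof of that result, and as written it has a genuine gap at exactly the step that makes the lemma nontrivial at this regularity. You cover $K$ by balls on which the \emph{sup-norm} oscillation of $a_2$ is at most $\varepsilon$ (legitimate, by Proposition \ref{prop:2}(c)), but the estimate you then invoke through the unweighted version of Proposition \ref{prop:4} controls $\|(a_2-a_2(x_k))\partial^2(\chi_k u)\|_{W_{s-2}^p}$ by the \emph{$W_s^p$-norm} of the localized oscillation, and smallness of that norm on small balls does not follow from continuity. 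A cutoff at scale $r$ has $W_s^p$-norm of order $r^{\frac{n}{p}-s}\to\infty$, and since $W_s^p$ only embeds into $C^{0,s-\frac{n}{p}}$, the localized norm $\|\zeta_k(a_2-a_2(x_k))\|_{W_s^p}$ is generically only $O(1)$ as $r\to0$ (near-critical profiles give at best a rate-free or logarithmic decay); so the quantity you can actually make small by shrinking $r$ is not the one multiplying $\|u\|_{W_s^p}$, and the absorption fails. The standard remedy, which is essentially what Holst--Nagy--Tsogtgerel do, is to use density of smooth functions in $W_s^p$ to split $a_2=a_2^\eta+r^\eta$ with $a_2^\eta$ smooth and $\|r^\eta\|_{W_s^p}$ globally small, absorb $\|r^\eta\partial^2 u\|_{W_{s-2}^p}\lesssim\|r^\eta\|_{W_s^p}\|u\|_{W_s^p}$ first, and only then freeze the coefficients of the smooth part, where sup-norm smallness does suffice; alternatively one exploits the strict inequality in the hypotheses of the product estimate so that only a strictly weaker norm of the oscillation is needed, a norm in which smallness does follow by interpolating the $L^\infty$-bound against the bounded $W_s^p$-norm.

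The lower-order terms are also mishandled. The bound $\|a_1\partial u\|_{W_{s-2}^p}\lesssim\|a_1\|_{W_{s-1}^p}\|u\|_{W_{s-1}^p}$ is not what your index check gives: pairing $a_1\in W_{s-1}^p$ with $\partial u$ measured in $W_{s-1}^p$ produces $\|u\|_{W_s^p}$ on the right, while producing $\|u\|_{W_{s-1}^p}$ would require the product $W_{s-1}^p\times W_{s-2}^p\to W_{s-2}^p$, whose sum condition demands $s>\frac{n}{p}+1$ and fails on part of the admissible range $s>\frac{n}{p}$. Likewise $\|a_0u\|_{W_{s-2}^p}\lesssim\|a_0\|_{W_{s-2}^p}\|u\|_{W_s^p}$ leaves a \emph{fixed} multiple of the top-order norm: $\|a_0\|_{W_{s-2}^p}$ is given data and cannot be ``tuned together with $r$'', so this term cannot be absorbed as described. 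Both issues are repaired by the usual interpolation device: use the slack in Proposition \ref{prop:4} to place $u$ (respectively $\partial u$) in $W_\sigma^p$ with $\frac{n}{p}<\sigma<s$, and then $\|u\|_{W_\sigma^p}\le\epsilon\|u\|_{W_s^p}+C_\epsilon\|u\|_{W_{s-1}^p}$ converts these contributions into an absorbable $\epsilon\|u\|_{W_s^p}$ plus the admissible $\|u\|_{W_{s-1}^p}$ term. With these two repairs your outline becomes the standard freezing argument; as it stands, the key smallness claim and the absorption step are not justified.
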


\begin{lem}
 \label{lem:elliptic2}
 Let $L\in{\bf Asy}(A_\infty,s,\delta,p)$,
 $s\in(\frac{n}{p},\infty)\cap [1,\infty)$, $\delta \in
 (-\frac{n}{p},-2+\frac{n}{p^\prime})$, $p\in(1,\infty)$ and
 $\delta'<\delta$.
 Then for any $u\in W_{s,\delta}^p(\setR^n)$,
 \begin{equation}
\label{eq:elliptic12}
 \left\|u\right\|_{W_{s,\delta}^p(\setR^n)}\leq C\left\{
\left\|Lu\right\|_{W_{s-2,\delta+2}^p(\setR^n)}+
\left\|u\right\|_{W_{s-1,\delta'}^p(\setR^n)}\right\},
\end{equation}
where the constant $C$ depends on  $W_{s,\delta}^p$-norms of the coefficients of
$L$, $s,\delta,p$ and $\delta'$.
\end{lem}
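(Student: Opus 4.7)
The proof is by dyadic decomposition and rescaling. Let $\{\psi_j\}$ denote the partition of unity from (\ref{eq:norm1}) and write $u_j := \psi_j u$, so that
$$L u_j = \psi_j\,Lu + [L,\psi_j]u,$$
where the commutator $[L,\psi_j]$ is a first-order differential operator supported in the $j$th dyadic shell and whose coefficients inherit factors of $2^{-j}$ from (\ref{eq:const:4}). Rescale by $2^j$: set $v_j(y) := u_j(2^j y)$, so that $v_j$ is supported in a fixed compact annulus independent of $j$. A direct computation shows that $v_j$ satisfies $\mathcal L^{(j)} v_j = 4^j (L u_j)(2^j\cdot)$, where $\mathcal L^{(j)}$ has coefficients $\tilde a_2(y) = a_2(2^j y)$, $\hat a_1(y) = 2^j a_1(2^j y)$, $\hat a_0(y) = 4^j a_0(2^j y)$.

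The key step is to apply Lemma \ref{lem:3} to $v_j$ on its fixed compact support, producing
\begin{equation*}
\|v_j\|_{W_s^p} \leq C\bigl(\|\mathcal L^{(j)} v_j\|_{W_{s-2}^p} + \|v_j\|_{W_{s-1}^p}\bigr)
\end{equation*}
with a constant $C$ uniform in $j$. For finitely many small $j$, uniformity is automatic. For large $j$, the identity $a_2 = A_\infty + (a_2-A_\infty)$ together with $a_2-A_\infty \in W_{s,\delta}^p$ shows that $\|\tilde a_2 - A_\infty\|_{W_s^p(\supp v_j)} \to 0$, since these norms are the rescaled dyadic pieces of the convergent series defining $\|a_2-A_\infty\|_{W_{s,\delta}^p}^p$; analogous decay for $\hat a_1,\hat a_0$ follows from the weight shifts $\delta+1,\delta+2$ still lying in the admissible range. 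Hence $\mathcal L^{(j)}$ is a small $W^p_{s-i}$-perturbation of the constant-coefficient isomorphism $A_\infty$ of Lemma \ref{lem:elliptic1}, and a Neumann-type argument produces a uniform constant.

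Raising to the $p$th power, multiplying by $2^{(\delta+n/p)pj}$ and summing, the left side reconstitutes $\|u\|_{W_{s,\delta}^p}^p$; the main term on the right reassembles, via Proposition \ref{prop:mult-smooth} applied to the multiplier $\psi_j$, into $\|Lu\|_{W_{s-2,\delta+2}^p}^p$; the commutator contributes at worst a multiple of $\|u\|_{W_{s-1,\delta}^p}^p$ (since the $2^{-j}$ factors from (\ref{eq:const:4}) absorb into the shift from $s$ to $s-1$); and the last term sums to $\|u\|_{W_{s-1,\delta}^p}^p$. This yields the estimate with $\delta$ in place of $\delta'$ on the right-hand side. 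To sharpen to $\delta'<\delta$, I would split $u = \chi_R u + (1-\chi_R)u$: on the compact support of $\chi_R u$ the $\delta$- and $\delta'$-weights are equivalent up to a factor $R^{\delta-\delta'}$, so $\|\chi_R u\|_{W_{s-1,\delta}^p} \leq C R^{\delta-\delta'}\|u\|_{W_{s-1,\delta'}^p}$, while the far-field piece $(1-\chi_R)u$ satisfies an equation which is again a small perturbation of $A_\infty$, producing via Proposition \ref{prop:Properties:11} a contribution of order $R^{-\varepsilon}\|u\|_{W_{s,\delta}^p}$ that can be absorbed on the left for $R$ large. The principal obstacle is obtaining the uniform-in-$j$ constant in the localized estimate; once this perturbation-from-$A_\infty$ step is carried out rigorously, the remainder of the argument is essentially bookkeeping with the weighted dyadic series.
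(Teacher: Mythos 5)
Your argument is correct in outline, but only its first half is genuinely different from the paper's proof. The different part is the dyadic localization: rescaling each shell to a fixed annulus and obtaining a $j$-uniform local estimate by treating $\mathcal{L}^{(j)}$ as a perturbation of $A_\infty$ does work, because the factors $1,2^j,4^j$ in $\tilde a_2,\hat a_1,\hat a_0$ are exactly compensated by the weight shifts $\delta+\frac{n}{p}$, $\delta+1+\frac{n}{p}$, $\delta+2+\frac{n}{p}$, leaving rescaled coefficient norms of order $2^{-(\delta+\frac{n}{p})j}\to 0$ (this is where $\delta>-\frac{n}{p}$ enters), while the finitely many small $j$ are covered by Lemma \ref{lem:3}; summing the weighted series then gives (\ref{eq:elliptic12}) with $\delta$ in place of $\delta'$ --- note the main term reassembles simply by the definition (\ref{eq:norm1}) of the weighted norm, not via Proposition \ref{prop:mult-smooth}. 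The second half of your proof, however, is not mere bookkeeping, and it is indispensable, since with weight $\delta$ the error term is useless (the embedding $W^p_{s,\delta}\hookrightarrow W^p_{s-1,\delta}$ is not compact). Handling $(1-\chi_R)u$ through the equation means invoking the isomorphism of $A_\infty$ from Lemma \ref{lem:elliptic1}, applying Proposition \ref{prop:Properties:11} to the coefficients $a_2-A_\infty,a_1,a_0$ --- no such $R$-decay is available for $(1-\chi_R)u$ itself at the fixed weight $\delta$ --- estimating $(1-\chi_R)(L-A_\infty)u$ by Proposition \ref{prop:4} with $\delta_1=-\frac{n}{p}$, and keeping the cutoff commutator $[A_\infty,\chi_R]u$, which is first order with compactly supported coefficients and produces the $C(R)\left\|u\right\|_{W_{s-1,\delta'}^p}$ term; this is precisely the paper's treatment of the exterior region, (\ref{eq:elliptic14})--(\ref{eq:elliptic20}). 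So your proof is valid, but it reproduces the paper's key far-field step, and the dyadic machinery only replaces the paper's much shorter interior step, namely Lemma \ref{lem:3} applied directly to the compactly supported piece $\chi_\rho u$ together with the commutator estimate (\ref{eq:elliptic21}). What your rescaling buys is a self-contained, uniform-in-$j$ localization that makes transparent why $\delta>-\frac{n}{p}$ is needed; what the paper's single near/far split buys is brevity.
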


A continuous linear operator $L:E\to F$, where $E$ and $F$ are Banach spaces, 
is called Semi--Fredholm if its kernel is finite dimensional and it has a closed
range. This is equivalent to the inequality 
\begin{equation*}
 \|u\|_{E}\leq C \left\{ \|Lu\|_{F}+| u |\right\},
\end{equation*}
where the norm $ |\cdot  |$ is  compact relative to the norm $\|  \cdot
\|_E$ (see e.g.~\cite[Theorem 6.2]{Schechter}).

Thus as a consequence of the estimates (\ref{eq:elliptic12}) and the compact
embedding Proposition \ref{prop:2}(b), we obtain:
\begin{cor}[Semi--Fredholm]
\label{cor:1}
 Let us assume that the conditions of Lemma \ref{lem:elliptic2} hold, then
$L:W_{s,\delta}^p(\setR^n)\to W_{s-2,\delta+2}^p(\setR^n)$ is a semi--Fredholm
operator. 
\end{cor}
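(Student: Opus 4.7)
The plan is to apply the standard Banach space criterion for a continuous linear operator to be semi-Fredholm: namely, if $L : E \to F$ satisfies an estimate of the form $\|u\|_E \le C(\|Lu\|_F + |u|)$ where $|\cdot|$ is a seminorm that is compact relative to $\|\cdot\|_E$, then $L$ has finite-dimensional kernel and closed range. This criterion is quoted in the paper from \cite[Theorem 6.2]{Schechter}, so the task reduces to producing the estimate with a compactly-controlled remainder.

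First, note that $L$ is a bounded operator from $W_{s,\delta}^p(\mathbb{R}^n)$ into $W_{s-2,\delta+2}^p(\mathbb{R}^n)$ by Corollary \ref{cor:elliptic1} (the hypotheses of Lemma \ref{lem:elliptic2} include $s \in (\tfrac{n}{p},\infty)\cap [1,\infty)$, which is compatible with that corollary). Next I would fix any $\delta' < \delta$ within the admissible range; Lemma \ref{lem:elliptic2} then yields
\begin{equation*}
\|u\|_{W_{s,\delta}^p(\mathbb{R}^n)} \leq C\bigl\{ \|Lu\|_{W_{s-2,\delta+2}^p(\mathbb{R}^n)} + \|u\|_{W_{s-1,\delta'}^p(\mathbb{R}^n)} \bigr\}
\end{equation*}
for all $u \in W_{s,\delta}^p(\mathbb{R}^n)$, with $C$ depending on the coefficients of $L$, on $s$, $\delta$, $p$ and $\delta'$.

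Now I would invoke Proposition \ref{prop:2}(b): since $s-1 < s$ and $\delta' < \delta$, the inclusion
\begin{equation*}
i : W_{s,\delta}^p(\mathbb{R}^n) \hookrightarrow W_{s-1,\delta'}^p(\mathbb{R}^n)
\end{equation*}
is compact. Therefore the remainder seminorm $|u| := \|u\|_{W_{s-1,\delta'}^p(\mathbb{R}^n)}$ in the a priori estimate is compactly dominated by the ambient norm $\|u\|_{W_{s,\delta}^p(\mathbb{R}^n)}$. By the semi-Fredholm criterion cited above, $L : W_{s,\delta}^p(\mathbb{R}^n) \to W_{s-2,\delta+2}^p(\mathbb{R}^n)$ has finite-dimensional kernel and closed range, which is precisely the semi-Fredholm property.

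There is no genuine obstacle here beyond correctly assembling the ingredients: the entire content of the corollary is that Lemma \ref{lem:elliptic2} and Proposition \ref{prop:2}(b) dovetail exactly with Schechter's functional-analytic characterisation. The only care needed is to check that a strict inequality $\delta' < \delta$ (as opposed to equality) is available, which it is, since $\delta$ ranges over the open interval $(-\tfrac{n}{p}, -2+\tfrac{n}{p'})$ and Lemma \ref{lem:elliptic2} is stated for every such $\delta'$.
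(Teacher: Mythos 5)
Your proposal is correct and follows essentially the same route as the paper: the a priori estimate of Lemma \ref{lem:elliptic2} with some $\delta'<\delta$, combined with the compact embedding of Proposition \ref{prop:2}(b) and the semi--Fredholm criterion from \cite[Theorem 6.2]{Schechter}. Nothing further is needed.
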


\textit{Proof of Lemma \ref{lem:elliptic2}.}
Let $\chi_\rho$ be a cut--off function such that $\supp{(\chi_\rho)}\subset
B_{2\rho}$, $\chi_\rho(x)=1$  on $B_\rho$ and $|\partial^\alpha
\chi_\rho|\leq C_\alpha \rho^{-|\alpha|}$. Here $B_\rho$ denotes a ball of
radius $\rho$. We decompose  $u=(1-\chi_\rho)u+\chi_\rho u$ and estimate each
term separately.  Lemma
\ref{lem:elliptic1} implies that $A_\infty$ is an isomorphism, hence there is a
constant $C$ such that
\begin{equation}
\label{eq:elliptic14}
 \left\|(1-\chi_\rho)u\right\|_{W_{s,\delta}^p(\setR^n)}\leq C
\left\|A_\infty\left((1-\chi_\rho)u\right)\right\|_{W_{s-2,\delta+2}^p(\setR^n)}
.
\end{equation}
Let  $[A_\infty,(1-\chi_\rho)]$ denote a commutation, that is,
\begin{equation}
\label{eq:elliptic:15}
\left([A_\infty,(1-\chi_\rho)]u\right)^i=-\left(A_\infty\right)^{ab}_{ij}
\left\{\partial_a \partial_b\chi_\rho u^j+2\partial_a\chi_\rho\partial_b 
u^j\right\}.
\end{equation} 
Then
\begin{equation}
\label{eq:elliptic17}
 A_\infty\left((1-\chi_\rho)u\right)=[A_\infty,(1-\chi_\rho)]u
+(1-\chi_\rho)L(u)
-(1-\chi_\rho)(L-A_\infty)u.
\end{equation}
The coefficients of the commutator (\ref{eq:elliptic:15}) have  compact 
support, hence we may replace $\delta+2$ in its 
$W_{s-2,\delta+2}^p(\setR^n)$--norm by any other $\delta'$, and that will 
result in an equivalent norm.  So by Propositions
\ref{prop:1} and  
\ref{prop:mult-smooth} we obtain that
\begin{equation}
\label{eq:elliptic16}
\begin{split}
 \left\|[A_\infty,(1-\chi_\rho)]u\right\|_{W_{s-2,\delta+2}^p(\setR^n)} & \leq
C_1(\rho)\left\{\left\|u\right\|_{W_{s-2,\delta'}^p(\setR^n)}+
\left\|Du\right\|_{W_{s-2,\delta'+1}^p(\setR^n)}\right\} \\ &
\leq C_1(\rho)\left\|u\right\|_{W_{s-1,\delta'}^p(\setR^n)}.
\end{split}
\end{equation} 
Letting $\delta_1=-\frac{n}{p}$ and $\delta_2=\delta+2$ allow us to apply
Proposition \ref{prop:4} and with a combination of Proposition \ref{prop:1},
we get  that
\begin{equation*}
\begin{split}
\left\|(1-\chi_\rho)(A_\infty
-a_2)D^2u\right\|_{W_{s-2,\delta+2}^p(\setR^n)} &\lesssim
\left\|(1-\chi_\rho)(A_\infty -a_2)\right\|_{W_{s,\delta_1}^p(\setR^n)}
\left\|D^2u\right\|_{W_{s-2,\delta+2}^p(\setR^n)}\\
 & \lesssim
\left\|(1-\chi_\rho)(A_\infty -a_2)\right\|_{W_{s,\delta_1}^p(\setR^n)}
\left\|u\right\|_{W_{s,\delta}^p(\setR^n)}.
\end{split}
\end{equation*}
 Since $\delta>\delta_1=-\frac{n}{p}$, we can apply  Proposition
\ref{prop:Properties:11} and obtain  that 
\begin{equation*}
 \left\|(1-\chi_\rho)(A_\infty -a_2)\right\|_{W_{s,\delta_1}^p(\setR^n)}
\lesssim \rho^{-(\delta-\frac{n}{p})}\left\|(A_\infty
-a_2)\right\|_{W_{s,\delta}^p(\setR^n)}.
\end{equation*}
Repeating similar arguments with the other terms, we conclude that
\begin{equation}
\label{eq:elliptic22}
\left\|(1-\chi_\rho)(L-A_\infty)u\right\|_{W_{s-2,\delta+2}^p(\setR^n)}
\leq\rho^{-(\delta-\frac{n}{p})} \Lambda
\left\|u\right\|_{W_{s,\delta}^p(\setR^n)},
\end{equation} 
where
\begin{equation*}
 \Lambda\simeq\left\|(a_2-A_\infty)\right\|_{W_{s,\delta}^p(\setR^n)}
+\left\|a_1\right\|_{W_{s-1,\delta+1}^p(\setR^n)}
+ \left\|a_0\right\|_{W_{s-2,\delta+2}^p(\setR^n)}.
\end{equation*}
Thus from inequalities (\ref{eq:elliptic14}),  (\ref{eq:elliptic16}) and
(\ref{eq:elliptic22}),  and
the identity (\ref{eq:elliptic17}), we obtain that
\begin{equation}
 \label{eq:elliptic20}
\begin{split}
 \left\|(1-\chi_\rho)u\right\|_{W_{s,\delta}^p(\setR^n)} &\leq C
 \left\|Lu\right\|_{W_{s-2,\delta+2}^p(\setR^n)}+C_1(\rho)\left\|u\right\|_{W_{
s-1,\delta'}^p(\setR^n)}\\ & +\rho^{-(\delta-\frac{n}{p})} \Lambda
\left\|u\right\|_{W_{s,\delta}^p(\setR^n)}.
\end{split}
\end{equation}  

We turn now to the second term. Since $\chi_\rho u$ has compact support,
$\|\chi_\rho u\|_{W_{s,\delta}^p(\setR^n)}\simeq \|\chi_\rho u\|_{W_{s}^p}$, so by 
Lemma \ref{lem:3},
\begin{equation}
\label{eq:elliptic13}
\|\chi_\rho u\|_{W_{s,\delta}^p(\setR^n)}\simeq \|\chi_\rho u\|_{W_{s}^p}
\leq C\left\{\|L(\chi_\rho u)\|_{W_{s-2}^p}+\|\chi_\rho u\|_{W_{s-1}^p}\right\}.
\end{equation} 
Now  $L(\chi_\rho u)=\chi_\rho Lu+[L,\chi_\rho]u$, where the commutator 
$[L,\chi_\rho]$ is an operator of
order one and with coefficients with compact support in $B_{2\rho}$. 
Thus as in the  estimate of the commutator (\ref{eq:elliptic:15}),  similar 
arguments provide that
\begin{equation}
\label{eq:elliptic21}
\begin{split}
\|L(\chi_\rho u)\|_{W_{s-2}^p} & \leq \|\chi_\rho
(Lu)\|_{W_{s-2}^p}+\|[L,\chi_\rho]u\|_{W_{s-2}^p}\\ &\leq
C_2(\rho)\left\{\|Lu\|_{W_{s-2,\delta+2}^p(\setR^n)}+\|u\|_{W_{s-1,\delta'}
^p(\setR^n)} \right\}.
\end{split}
\end{equation}
Combining inequalities (\ref{eq:elliptic20}),  (\ref{eq:elliptic13}) and
(\ref{eq:elliptic21})   yields
\begin{equation*}
\begin{split}
 \| u\|_{W_{s,\delta}^p(\setR^n)} &\leq
\left\{C+C_2(\rho)\right\}\| L u\|_{W_{s-2,\delta+2}^p(\setR^n)}
+\left\{C_1(\rho)+C_2(\rho)\right\}\| u\|_{W_{s-1,\delta'}^p(\setR^n)}
\\ &+\rho^{-(\delta-\frac{n}{p})}\Lambda \| u\|_{W_{s,\delta}^p(\setR^n)}.
\end{split}
\end{equation*} 
Thus choosing $\rho$ sufficiently large such that
$\rho^{-(\delta-\frac{n}{p})}\Lambda\leq\frac{1}{2}$ completes the proof.
\hfill{$\square$}

The next proposition asserts that solutions to the homogeneous equation have
 a lower growth at infinity. 
\begin{prop}
\label{prop:10}
Assume $L\in{\bf Asy}(A_\infty,s,\delta,p)$ $s\in
(\frac{n}{p},\infty)\cap [1,\infty)$ and
$\delta\in(-\frac{n}{p},-2+\frac{n}{p'})$.
If
\begin{math}
 L u=0,
\end{math}
then $u\in W_{s,\delta'}^p(\setR^n)$ for any $\delta'\in
(-\frac{n}{p},-2+\frac{n}{p'})$.
\end{prop}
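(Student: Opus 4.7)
The plan is to bootstrap the decay exponent of $u$ at infinity by combining the pointwise multiplication estimate of Proposition \ref{prop:4} with the isomorphism $A_\infty : W_{s,\sigma}^p(\setR^n) \to W_{s-2,\sigma+2}^p(\setR^n)$ supplied by Lemma \ref{lem:elliptic1}. Rewriting $Lu=0$ as
\[
A_\infty u \;=\; -(a_2 - A_\infty)^{ab}_{ij}\partial_a\partial_b u^j \;-\; (a_1)^a_{ij}\partial_a u^j \;-\; (a_0)_{ij} u^j,
\]
each term on the right is a product of a coefficient, carrying extra decay by the $\mathbf{Asy}$ hypothesis, with a derivative of $u$; the expectation is that such a product lies in a weighted Besov space with strictly better decay than $u$ itself, and that this gain then transfers back to $u$ via the isomorphism. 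If $\delta'\le\delta$ the conclusion is immediate from Proposition \ref{prop:2}(a), so one focuses on $\delta < \delta' < -2+n/p'$.

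Starting from $u\in W_{s,\delta_0}^p(\setR^n)$ with $\delta_0 \in (-n/p,\,-2+n/p')$ (initially $\delta_0=\delta$), Proposition \ref{prop:1} gives $\partial u \in W_{s-1,\delta_0+1}^p$ and $\partial^2 u \in W_{s-2,\delta_0+2}^p$. Since $s > n/p$ and $n\ge 2$, the hypotheses of Proposition \ref{prop:4} are satisfied for each of the three products above (taking target regularity $s-2$), and each one lies in $W_{s-2,\,\delta+\delta_0+n/p+2}^p$. Setting
\[
\delta_1 \;:=\; \min\bigl\{\delta + \delta_0 + n/p,\; \delta'\bigr\},
\]
Proposition \ref{prop:2}(a) yields $A_\infty u \in W_{s-2,\delta_1+2}^p$. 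Because $\delta > -n/p$, the increment $\delta + n/p$ is strictly positive, so $\delta_1 > \delta_0$, while the truncation at $\delta'$ keeps $\delta_1 < -2+n/p'$; hence $\delta_1$ lies in the admissibility window of Lemma \ref{lem:elliptic1}.

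Applying Lemma \ref{lem:elliptic1} at the level $\delta_1$ produces a unique $v \in W_{s,\delta_1}^p$ with $A_\infty v = A_\infty u$. Since $W_{s,\delta_1}^p \subset W_{s,\delta_0}^p$, both $u$ and $v$ belong to $W_{s,\delta_0}^p$, so $A_\infty(u-v)=0$ there and the isomorphism at the coarser level $\delta_0$ forces $u=v \in W_{s,\delta_1}^p$. Iterating with $\delta_0$ replaced by $\delta_1$, the decay exponent advances by at least the fixed positive amount $\delta + n/p$ per pass, so after finitely many steps the prescribed $\delta'$ is reached.

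The main subtlety is keeping the bootstrap self-consistent: the new weight $\delta_1$ must always be chosen inside $(-n/p,\,-2+n/p')$ so that Lemma \ref{lem:elliptic1} still applies, and one must invoke the injectivity of the isomorphism at the coarser level to identify the newly produced $v$ with the original $u$, rather than merely obtaining an unrelated solution; both are handled by the truncation in the definition of $\delta_1$ together with the embedding $W_{s,\delta_1}^p \subset W_{s,\delta_0}^p$.
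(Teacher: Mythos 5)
Your proposal is correct and follows essentially the same route as the paper's proof (which follows Christodoulou--O'Murchadha): rewrite $A_\infty u=-(L-A_\infty)u$, use the multiplication property of Proposition \ref{prop:4} to show the right-hand side gains decay of order $\delta+\frac{n}{p}$, transfer this gain back to $u$ via the isomorphism of Lemma \ref{lem:elliptic1}, and iterate until the weight $\delta'$ is reached. Your explicit identification of $u$ with the solution produced at the finer weight, through injectivity of $A_\infty$ at the coarser weight together with the embedding of Proposition \ref{prop:2}(a), is a detail the paper leaves implicit, but the argument is the same.
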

\begin{proof}
We follow the idea of Christodoulou and O'Murchadha \cite{OMC}.
By Proposition \ref{prop:2} (a) it suffices to prove  the statement  for
$\delta'>\delta$. Let
\begin{equation*}
 f=(L-A_{\infty})u.
\end{equation*}
At the first stage we chose $\delta'>\delta$ so that
$\frac{n}{p}+\delta+(\delta+2)\geq \delta'+2$.
Then by  Proposition \ref{prop:4} we obtain that
\begin{equation*}
 \left\| f\right\|_{W_{s-2,\delta'+2}^p(\setR^n)}\lesssim\left(\left\|
a_2-A_\infty\right\|_{W_{s,\delta}^p(\setR^n)}+\left\|
a_1\right\|_{W_{s-1,\delta+2}^p(\setR^n)}+\left\|
a_0\right\|_{W_{s-2,\delta+2}^p(\setR^n)}\right) \left\|
u\right\|_{W_{s,\delta}^p(\M)}. 
\end{equation*}
Since $Lu=0$, $A_\infty u=f$, so by Lemma
\ref{lem:elliptic1} we obtain  that 
$\left\|u\right\|_{W_{s,\delta'}^p(\setR^n)}\lesssim 
\left\|f\right\|_{W_{s-2,\delta'+2}^p(\setR^n)}$. We now may repeat this
procedure with  $\delta'$ replacing 
 $\delta$ and  $\delta''$ replacing  $\delta'$, which  can be done iteratively
until $\delta''=-2+\frac{n}{p}$.
\end{proof}

\subsection{Asymptotically flat manifold }
\label{sec:Asymptotically flat manifold}

A Riemannian manifold $(\M,g)$ is asymptotically flat  if the 
Riemannian metric tends to the Euclidean metric at infinity. In case the 
Riemannian metric $g$ is smooth, then  is often required that the convergence to 
the Euclidean metric has a certain decay rate at infinity, as for example in 
\cite{Lee_Parker_87}. Our definition follows essentially  the one of Bartnik 
\cite{bartnik86}.  
\begin{defn}
\label{def:asymp}
Let $\M$ be $n$ dimensional smooth connected manifold and let $g$ be a
metric on $\M$ such that $(\M,g)$ is complete. We say that $(\M,g)$ is
\textbf{asymptotically flat} of the class $W_{s,\delta}^p$, if $g\in
W_{s,{\rm loc}}^p(\M)$ and there is a compact set $\mathcal{K}\subset \M$ such
that:
\begin{enumerate}
\item There is a finite collection of charts $\{(U_i,\phi_i)\}_{i=1}^N$ which
covers $\M\setminus \mathcal{K}$;
\item For each $i$,  $\phi_i^{-1}(U_i)=E_{r_i}:=\{x\in \setR^n:
|x|>r_i\}$ for some positive $r_i$;
 \item  
\label{item:3}
The pull--back $(\phi_i^\ast g )$ is uniformly equivalent to
the Euclidean  metric $e$ on $E_{r_i}$;
 \item 
\label{item:4}
For each $i$, $(\phi_i^\ast g )_{ab}-\delta_{ab}\in
W_{s,\delta}^p(E_{r_i})$, where $\delta_{ab}$ the is the Kronecker symbol.
\end{enumerate}
\end{defn}

The weighted Sobolev space on $\M$ is denoted by $W_{s,\delta}^p(\M)$
and its norm is defined as follows.
Let $(V_j,\varTheta_j)$ be
collections of charts which cover $\mathcal{K}$ and where
$\varTheta_j$ is a diffeomorphism between a ball $B_j$
in $\setR^n$ and $V_j\subset\mathcal{K}$. 
Let $\{\chi_i,\alpha_j\}$ be a partition of unity subordinate to
$\{U_i,V_j\}$, then
\begin{equation}
\label{eq:norm-M}
 \left\|u \right\|_{W_{s,\delta}^p(\M)}:= 
\sum_{i=1}^N\left\|\phi_i^\ast(\chi_i u)
\right\|_{W_{s,\delta}^p(\setR^n)}+\sum_{j=1}^{N_0}
\left\|\varTheta_j^\ast(\alpha_j u)
\right\|_{W_{s}^p(\setR^n)}
\end{equation} 
is a norm of the weighted Besov space $W_{s,\delta}^p(\M)$.  Note that the 
norm (\ref{eq:norm-M}) depends on the partition of unity, but different
partitions result in equivalent norms. A bilinear form on
$W_{s,\delta}^p(\M)\otimes W_{-s,-\delta}^{p'}(\M)$ is
defined in a similar way:
\begin{equation}
\label{eq:bilinear:1}
 \langle u, \varphi\rangle_{\M}=\sum_{i=1}^N\langle\phi_i^\ast(\chi_i u),
\phi_i^\ast(\chi_i\varphi)\rangle_W
+\sum_{j=1}^{N_0}\langle\varTheta_j^\ast(\alpha_j u),\varTheta_j^\ast(\alpha_j
\varphi)\rangle,
\end{equation} 
 where $\langle \cdot,\cdot\rangle$ is a bilinear form (\ref{eq:3}) and 
$\langle \cdot,\cdot\rangle_W$ is the bilinear form (\ref{eq:4}).
 Using (\ref{eq:3}) and inequality
(\ref{eq:norm:8}), and 
combining these  with an elementary inequality and  the norm \ref{eq:norm-M}, we
see that 
\begin{equation}
\label{eq:bilinear:2}
 |\langle u, \varphi\rangle_{\M}|\leq C \left\|u
\right\|_{W_{s,\delta}^p(\M)}\left\|\varphi \right\|_{W_{-s,-\delta}^{p'}(\M)}.
\end{equation}  
The form (\ref{eq:bilinear:1}) depends on the partition of unity, but each one
induces a topological dual $\left(W_{s,\delta}^p(\M)\right)^\ast$ isomorphic to
$ W_{-s,-\delta}^{p'}(\M)$ (see Appendix \S\ref{Appendix}).

The norm of the spaces $C_{\beta}^m(\M)$  is defined in an analogous manner,
(\ref{eq:norm-M}), that is,
\begin{equation}
\label{eq:norm-M-C}
 \left\|u \right\|_{C_{\beta,m}(\M)}:= 
\sum_{i=1}^N\left\|\phi_i^\ast(\chi_i u)
\right\|_{C_{\beta}^m(\setR^n)}+\sum_{j=1}^{N_0}
\left\|\varTheta_j^\ast(\alpha_j u)
\right\|_{C_{m}(\setR^n)}.
\end{equation} 
It follows from Condition 3~of Definition \ref{def:asymp} that  both  types of 
the norms do not depend on the metric. This fact is a known  property of the 
ordinary Sobolev spaces on compact Riemannian manifolds (see
\cite[Proposition 2.3]{hebey_1996}).

Let $E$ and $F$ be two smooth vector bundles over a Riemannian manifold 
$(\M,g)$.  A second order linear 
differential operator $L$ on $\M$ is a linear map from $C^\infty(E)$ to 
$C^\infty(F)$ that can be written in local coordinates in the form 
\begin{equation}
\label{eq:ellptic:11}
(L u)^i = (a_2)_{ij}^{ab}\nabla_a\nabla_bu^j+(a_1)_{ij}^a\nabla_a u^j 
+(a_0)_{ij}u^j,
\end{equation} 
where $\nabla$ is the covariant derivative and the coefficients $a_k$
 are tensors. 
 The operator is elliptic at $x\in \M$, if
$\det\left((a_2(x))_{ij}^{ab}\xi_a\xi_b\right)\neq0$
for all covectors $\xi\neq0$.  

The operator $L$ belongs to 
${\bf Asy}(A_\infty,s,\delta,p)$ on  $(\M,g)$ asymptotically flat manifold  if 
 $(a_2-A_\infty)\in W_{s,\delta}^p(\M)$, $a_1\in 
W_{s-1,\delta+1}^p(\M)$ and $a_0\in 
W_{s-2,\delta+2}^p(M)$,  where $A_\infty$ is a constant tensor.

The properties of the $W_{s,\delta}^p(\setR^n)$ spaces proven in Sections
\ref{sec:some-properties-h_s} and \ref{sec:ellipt-oper-w_s} are also valid for
$W_{s,\delta}^p(\mathcal{M})$. These can be proven
by using a finite  covering of the manifold  and a partition
of unity
subordinate to the covering. We will discuss some of these in the Appendix (\S
\ref{Appendix}).

Let $L=\Delta_g$ be the Laplace Beltrami operator on
the Riemannian manifold  
$\M$, then in   local coordinates it  has  the form
\begin{equation}
\label{eq:elliptic:14}
 \Delta_g u=g^{ab}\nabla_a\nabla_b u=g^{ab}\partial_a\partial_b 
u+\partial_b(g^{ab})\partial_a
u+\dfrac{1}{2}\left(g^{ab}(\partial_b g_{ab})\right)g^{ab}\partial_a u,
\end{equation} 
where $g^{ab}$ denote the inverse matrix of $g_{ab}$.  
The principle symbol is $g^{ab}\xi_a\xi_b=|\xi|_g^2$, so  it is obviously an 
elliptic operator on the manifold $\M$. For each of the charts $(U_i,\phi_i)$ 
we get from Propositions \ref{prop:1}, \ref{prop:4} and \ref{prop:Moser}, and 
condition 4.~of Definition 
\ref{def:asymp}, that 
$(g^{ab}-\delta^{ab})\in W_{s,\delta}^p(E_{r_i})$. Also  the first order terms 
belong to $W_{s-1,\delta+1}^p(E_{r_i})$. Hence it follows from Definition 
\ref{def:asymp} and the norm (\ref{eq:norm-M}) that for any $a_0\in 
W_{s-2,\delta+2}^p(\M)$,
the operator
\begin{equation*}
 L=-\Delta_g + a_0
\end{equation*}
belongs  to ${\bf{Asy}}(-\Delta,s,\delta,p)$
on $(\M,g)$. Here
$\Delta u=\partial^a\partial_a u$ in local coordinates.

The coefficients of the Laplacian (\ref{eq:elliptic:14}) in the 
exterior of the ball $E_{r_i}$ can be extended to the 
entire space $\setR^n$ so that the extension  remains an elliptic operator. 
Hence, if  $s\in(\frac{n}{p},\infty)\cap [1,\infty)$ and
$\delta\geq -\frac{n}{p}$, then
  as a consequence of the above definition of the norm (\ref{eq:norm-M})
and Lemma \ref{lem:elliptic2} on manifolds (see Proposition \ref{lemma:3.5} in
Appendix \S\ref{Appendix})) we 
obtain:
\begin{cor}
\label{cor:2}
 Let $s\in(\frac{n}{p},\infty)\cap [1,\infty)$, $\delta\in
(-\frac{n}{p},-2+\frac{n}{p^\prime})$, $a_0\in W_{s-2,\delta+2}^p(\M)$ 
and assume $(\M,g)$ is an asymptotically flat manifold  of the class
$W_{s,\delta}^p$. Then
\begin{equation*}
 -\Delta_g+a_0:W_{s,\delta}^p(\M)\to W_{s-2,\delta+2}^p(\M)
\end{equation*} 
is a semi--Fredholm operator.
\end{cor}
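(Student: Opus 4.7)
The strategy is to transfer the Euclidean semi-Fredholm result of Corollary \ref{cor:1} to the manifold setting via the partition of unity appearing in the norm (\ref{eq:norm-M}). Since Lemma \ref{lem:elliptic2} already yields a semi-Fredholm-type a priori estimate in $\setR^n$ for any $L \in \mathbf{Asy}(A_\infty,s,\delta,p)$, and Proposition \ref{prop:2}(b) provides the compact embedding needed to absorb the lower-order term, the plan is to combine these two ingredients on $\M$ and then invoke the standard semi-Fredholm characterization recalled between Lemma \ref{lem:elliptic2} and Corollary \ref{cor:1}.

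First I would note that the hypothesis $(\M,g)$ asymptotically flat of class $W_{s,\delta}^p$, together with Propositions \ref{prop:1}, \ref{prop:4} and \ref{prop:Moser} applied to the chart expression (\ref{eq:elliptic:14}), places $-\Delta_g + a_0$ in $\mathbf{Asy}(-\Delta,s,\delta,p)$ on $(\M,g)$; this has already been observed in the paragraph preceding the corollary. Next I would establish the manifold analogue of Lemma \ref{lem:elliptic2}, namely
\begin{equation*}
\|u\|_{W_{s,\delta}^p(\M)} \leq C\bigl\{\|(-\Delta_g + a_0)u\|_{W_{s-2,\delta+2}^p(\M)} + \|u\|_{W_{s-1,\delta'}^p(\M)}\bigr\}
\end{equation*}
for some $\delta' < \delta$, which is precisely Proposition \ref{lemma:3.5} of the Appendix. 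To prove it, one decomposes $u$ via the partition of unity $\{\chi_i,\alpha_j\}$: on each asymptotic chart $(U_i,\phi_i)$ one pulls $\chi_i u$ back to $\setR^n$ (after a smooth extension of the Laplacian coefficients off $E_{r_i}$ preserving ellipticity) and applies Lemma \ref{lem:elliptic2}, while on each compact chart $(V_j,\varTheta_j)$ one applies the unweighted estimate of Lemma \ref{lem:3} to $\varTheta_j^\ast(\alpha_j u)$, observing that on compact sets the weighted norm is equivalent to the unweighted one. The commutators $[-\Delta_g,\chi_i]$ and $[-\Delta_g,\alpha_j]$ are first-order operators whose contributions are controlled by $\|u\|_{W_{s-1,\delta'}^p(\M)}$ via Propositions \ref{prop:mult-smooth} and \ref{prop:1}, and summing the local estimates yields the global bound.

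Once this a priori estimate is in place, Proposition \ref{prop:2}(b) extended to $\M$ by the same partition-of-unity argument gives that the inclusion $W_{s,\delta}^p(\M) \hookrightarrow W_{s-1,\delta'}^p(\M)$ is compact for $\delta' < \delta$. Thus the lower-order term $\|u\|_{W_{s-1,\delta'}^p(\M)}$ is compact relative to $\|u\|_{W_{s,\delta}^p(\M)}$, and the standard characterization stated in the text (a continuous operator is semi-Fredholm if and only if it satisfies such an inequality with a compact seminorm) immediately implies that $-\Delta_g + a_0$ has finite-dimensional kernel and closed range, completing the proof.

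The main obstacle will be the transfer of Lemma \ref{lem:elliptic2} to $\M$: although conceptually it is just a partition-of-unity gluing, one must carefully check that the commutator terms genuinely land in a weight space strictly better than $\delta$, so that the compact embedding can absorb them, and one must handle the matching between the weighted norms on the asymptotic charts and the unweighted norms on the compact charts in a way that is independent of the chosen partition. The remaining ingredients are straightforward translations of Euclidean results already proven in Sections \ref{sec:Besov} and \ref{sec:ellipt-oper-w_s}.
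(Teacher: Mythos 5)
Your proposal is correct and follows essentially the same route as the paper: it reduces the statement to the manifold version of the \textit{a priori} estimate (Proposition \ref{lemma:3.5}), proved exactly as in the Appendix by gluing the weighted Euclidean estimate of Lemma \ref{lem:elliptic2} on the asymptotic charts (after extending the coefficients) with the unweighted estimate of Lemma \ref{lem:3} on the compact charts, controlling the first-order commutators by the lower-order norm. The conclusion then follows, as in the paper, from the compact embedding of Proposition \ref{prop:2}(b) and the standard characterization of semi--Fredholm operators via an estimate with a relatively compact seminorm.
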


\subsection{Weak solutions of  linear systems on manifolds}
\label{sec:weak}

In this subsection we consider weak solution of
the equation $-\Delta_g u+a_0u=f$, where $\Delta_g$ is the
Laplace Beltrami operator (\ref{eq:elliptic:14}) and $a_0,f\in
W_{s-2,\delta+2}^p(\M)$.

But prior to the definition of weak solutions, we have to extend the
$L^2$-form
\begin{equation}
\label{eq:l-2}
(u,v)_{(L^2,g)}:= \int_{\M} uv d\mu_g, \quad u,v\in C_0^\infty(\M),
\end{equation}
 to a continuous  bilinear form on 
\begin{math}
W_{s,\delta}^p(\M)\otimes W_{-s,-\delta}^{p'}(\M)
\end{math}.  Here $\mu_g$ is the volume element with respect to the metric $g$.

So assume $(\M,g)$ is an asymptotically flat manifold of the class
$W_{s,\delta}^p$, $s>\frac{n}{p}$ and $\delta\geq -\frac{n}{p}$, let
$|g|$ denote the determinant of $g$ and
$\{\chi_i,\alpha_j\}$ be the partition of unity as in the definition
of the norm (\ref{eq:norm-M}), then
\begin{math}
\{ \tilde{\chi}_i^2,\tilde{\alpha}_j^2\} :=
\left(\sum_{i=1}^N\chi_i^2+\sum_{j=1}^{N_0}\alpha_j^2\right)^{-1}\{\chi_i^2,
\alpha_j^2\}
\end{math} 
is also a partition of unity.
Using Propositions \ref{prop:dual} and \ref{prop:dual2} and the
bilinear form (\ref{eq:bilinear:1}), we obtain that
\begin{equation}
\label{eq:bilinear:4}
\begin{split}
(u,v)_{(L^2,g)} &=\sum_{i=1}^{N}
\int_{E_{r_i}}\phi^\ast_i\left((\tilde{\chi}_i u
\sqrt{|g|})(\tilde{\chi_i} v)\right)dx+\sum_{j=1}^{N_0}
\int_{B_j}\varTheta^\ast_j\left((\tilde{\alpha_j} u \sqrt{|g|})(\tilde{\alpha_j}
v)\right)dx\\ &=
\sum_{i=1}^{N}
\langle\phi^\ast_i(\tilde{\chi}_i u
\sqrt{|g|}), \phi^\ast_i(\tilde{\chi_i} v)\rangle_W+\sum_{j=1}^{N_0}
\langle \varTheta^\ast_j(\tilde{\alpha_j} u \sqrt{|g|}),
\varTheta^\ast_j(\tilde{\alpha_j}
v)\rangle\\ &=\langle
\sqrt{|g|}u,v\rangle_{\M}=:\langle u,v\rangle_{(\M,g)}.
\end{split}
\end{equation} 
Now 
it follows from the generalised H\"older inequality (\ref{eq:bilinear:2}) and
Propositions \ref{prop:4} (and its corresponding form on manifolds, Proposition 
\ref{prop:4M}) and \ref{prop:Moser} that whenever  $s-2\leq s'\leq s$ and 
any $\delta'$, then
\begin{equation}
\begin{split}
 |\langle u,v\rangle_{(\M,g)}| & =|\langle
\sqrt{|g|}u,v\rangle_{\M}|\leq C
\left\|\sqrt{|g|}u \right\|_{W_{s',\delta'}^p(\M)}
\left\|\varphi \right\|_{W_{-s',-\delta'}^{p'}(\M)}\\ &
\leq C
\left\|\left(\sqrt{|g|}-1\right)
\right\|_{W_{s,\delta}^p(\M)}\left\|u
\right\|_{W_{s',\delta'}^p(\M)}\left\|\varphi 
\right\|_{W_{-s',-\delta'}^{p'}(\M)}
\\ &
\leq C_g
\left\|u
\right\|_{W_{s',\delta'}^p(\M)}\left\|\varphi 
\right\|_{W_{-s',-\delta'}^{p'}(\M)}.
\end{split}
\end{equation} 
This also implies that the topological dual with respect to $\langle
\cdot,\cdot\rangle_{(\M,g)}$ is isomorphic to $W_{-s',-\delta'}^{p'}(\M)$ (see
Proposition \ref{prop:duality}). 
Thus we have proven:
\begin{prop}
\label{prop:9}
 Let $s\in(\frac{n}{p},\infty)\cap [1,\infty)$, $\delta\geq-\frac{n}{p}$, 
$s-2\leq s\leq s$,
$\delta'\in\setR$ and $(\M,g)$ be 
 an asymptotically flat manifold  of the class $W_{s,\delta}^p$. 
Then the
$L^2$ inner--product (\ref{eq:l-2}) extends to a 
continuous bilinear form 
\begin{math}
\langle\cdot,\cdot\rangle_{(\M,g)}: W_{s',\delta'}^p(\M)\otimes
W_{-s',-\delta'}^{p'}(\M)\to\setR
\end{math}
that satisfies the 
inequality
\begin{equation}
\label{eq:inner4}
| \langle u,v\rangle_{(\M,g)}|\leq C_g
\left\|u\right\|_{W_{s',\delta'}^p(\M)}\left\|v\right\|_{W_{-s',-\delta'}^{p'}
(\M) },
\end{equation} 
where the constant $C_g$ depends on the metric $g$. 
Furthermore, if $s'>0$, $u\in 
W_{s',\delta'}^p(\M)$ and 
$v\in \mathcal{S}$, then 
$\langle
u,v\rangle_{(M,g)}=(u,v)_{(L^2,g)}$.
\end{prop}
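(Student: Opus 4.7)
The plan is to build $\langle \cdot,\cdot\rangle_{(\mathcal{M},g)}$ from the already-constructed Euclidean bilinear forms $\langle\cdot,\cdot\rangle_W$ and $\langle\cdot,\cdot\rangle$ by localizing everything through the partition of unity $\{\chi_i,\alpha_j\}$ that defined the norm (\ref{eq:norm-M}). First I would replace $\{\chi_i,\alpha_j\}$ by a "squared'' partition $\{\tilde\chi_i^2,\tilde\alpha_j^2\}$ obtained by normalizing $\{\chi_i^2,\alpha_j^2\}$ by their sum, so that for smooth compactly supported $u,v$
\begin{equation*}
 (u,v)_{(L^2,g)}=\sum_i\int_{E_{r_i}}\phi_i^\ast\!\left((\tilde\chi_iu\sqrt{|g|})(\tilde\chi_iv)\right)dx+\sum_j\int_{B_j}\varTheta_j^\ast\!\left((\tilde\alpha_ju\sqrt{|g|})(\tilde\alpha_jv)\right)dx.
\end{equation*}

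Next I would identify each Euclidean integral with one of the bilinear forms on $\mathbb{R}^n$. Because $s>n/p$ the pulled-back functions lie in a positive-order Besov space, so by Propositions \ref{prop:dual} and \ref{prop:dual2} each such integral equals the corresponding $\langle\cdot,\cdot\rangle_W$ or $\langle\cdot,\cdot\rangle$. Summing gives precisely $\langle\sqrt{|g|}\,u,v\rangle_\mathcal{M}$, so I take this as the \emph{definition} of $\langle u,v\rangle_{(\mathcal{M},g)}$ and extend it to all $u\in W^p_{s',\delta'}(\mathcal{M})$, $v\in W^{p'}_{-s',-\delta'}(\mathcal{M})$. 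Continuity is then a direct consequence of the Euclidean inequality (\ref{eq:bilinear:2}) applied to $\sqrt{|g|}\,u$, provided multiplication by $\sqrt{|g|}$ is bounded on $W^p_{s',\delta'}(\mathcal{M})$. This in turn reduces to showing $\sqrt{|g|}-1\in W_{s,\delta}^p(\mathcal{M})$: start from condition~4 of Definition \ref{def:asymp} giving $g_{ab}-\delta_{ab}\in W_{s,\delta}^p$, use Proposition \ref{prop:4} to conclude $|g|-1\in W^p_{s,\delta}$, and apply the Moser estimate Proposition \ref{prop:Moser} to $F(t)=\sqrt{1+t}-1$. The multiplier bound $\|\sqrt{|g|}\,u\|_{W^p_{s',\delta'}(\mathcal{M})}\leq C_g\|u\|_{W^p_{s',\delta'}(\mathcal{M})}$ then follows from Proposition \ref{prop:4} (its manifold version alluded to in the Appendix) together with the splitting $\sqrt{|g|}\,u=u+(\sqrt{|g|}-1)u$; the hypotheses $s>n/p$, $\delta\geq-n/p$ satisfy the conditions of that proposition, and the range $s-2\leq s'\leq s$ guarantees $|s'|\leq s$ so that the multiplication hypotheses hold on both sides.

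The final assertion, that the extension coincides with the literal $L^2$ pairing when $s'>0$ and $v\in\mathcal{S}$, follows by density: approximate $u\in W^p_{s',\delta'}(\mathcal{M})$ by $C_0^\infty$ functions (Theorem \ref{thm:Triebel}(b) plus a partition-of-unity argument), apply the identity chart-by-chart via Proposition \ref{prop:dual2}, and pass to the limit using the just-proven continuity. The main obstacle is the book-keeping in the first step: one must arrange the partition-of-unity covering so that the squared cutoffs still behave like a dyadic resolution on each chart and so that the identification of the integrand with the form $\langle\cdot,\cdot\rangle_W$ uses only products of factors belonging to $W^p_{s,\delta}(\mathbb{R}^n)\times W^{p'}_{-s,-\delta}(\mathbb{R}^n)$ — this is what forces the detour through $\sqrt{|g|}$ rather than integrating naively, and it is also what compels the separate verification that the Moser/multiplication machinery of Section \ref{sec:some-properties-h_s} transplants to the manifold (the content left for the Appendix).
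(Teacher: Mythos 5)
Your proposal is correct and follows essentially the same route as the paper: defining $\langle u,v\rangle_{(\M,g)}:=\langle\sqrt{|g|}\,u,v\rangle_{\M}$ through the normalized squared partition of unity and Propositions \ref{prop:dual}--\ref{prop:dual2}, then getting continuity from (\ref{eq:bilinear:2}) together with the multiplication and Moser estimates (Propositions \ref{prop:4}, \ref{prop:4M}, \ref{prop:Moser}) applied to $\sqrt{|g|}-1\in W_{s,\delta}^p(\M)$, and concluding the $s'>0$ identification by density. Your explicit splitting $\sqrt{|g|}\,u=u+(\sqrt{|g|}-1)u$ is in fact a small tidying of the paper's displayed estimate, which tacitly absorbs the identity term into the constant $C_g$.
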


In a similar manner we can treat the $L^2$--bilinear form between two
smooth vector 
fields $X$ and $Y$, that is, 
\begin{equation}
\label{eq:bilinear:3}
\begin{split}
& (X,Y)_{(L^2,{g})} :=\int_{\M}\langle X,Y\rangle_{g}  d\mu_{{g}}\\ 
= &\sum_{i=1}^{N}
\int_{E_{r_i}}\phi^\ast_i\left((\tilde{\chi}_i
\sqrt{|g|}g^{ab}X_a)(\tilde{\chi_i} Y_b)\right)dx+\sum_{j=1}^{N_0}
\int_{B_j}\varTheta^\ast_j\left((\tilde{\alpha_j}
\sqrt{|g|}g^{ab}X_a)(\tilde{\alpha_j}
Y_b)\right)dx\\ = &\langle
\sqrt{|g|}g^{ab} X_a, Y_b\rangle_{\M}=:\langle X,Y \rangle_{(\M,g)},
\end{split}
\end{equation} 
Applying again Propositions  \ref{prop:4} and
\ref{prop:Moser}, we conclude:
\begin{prop}
\label{prop:17}
Let $(\M,g)$ be an asymptotically flat manifold  of the class
$W_{s,\delta}^p$,  $s\in
(\frac{n}{p},\infty)\cap [\frac{1}{2},\infty)$ and $\delta\geq-\frac{n}{p}$. 
Then
the $L^2$-bilinear form (\ref{eq:bilinear:3}) 
 has a
continuous extension to a form  $\langle X,Y\rangle_{(\M,g)}$ on
$W_{s-1,\delta+1}^p(\M)\otimes W_{1-s,-\delta-1}^{p'}(\M)$ that satisfies the
inequality 
\begin{equation}
 |\langle X,Y \rangle_{(\M,g)}|\leq C_g
\|X\|_{W_{s-1,\delta+1}^p(\M)}\|Y\|_{W_{1-s,-\delta-1}^{p'}(\M)}.
\end{equation} 
Furthermore, if {$s>1$},{ $X\in 
W_{s-1,\delta+1}^p(\M)$} and $Y\in \mathcal{S}$, 
then
$\langle X,Y\rangle_{(\M,g)}=(X,Y)_{(L^2,g)} $.
\end{prop}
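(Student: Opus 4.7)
The plan is to mirror the proof of Proposition~\ref{prop:9} in the vector-bundle setting, reducing everything to the $\setR^n$ bilinear form via the partition of unity. For smooth compactly supported $X,Y$, insert the partition $\{\tilde{\chi}_i^2,\tilde{\alpha}_j^2\}$ into (\ref{eq:bilinear:3}) exactly as is done in (\ref{eq:bilinear:4}), then apply Propositions~\ref{prop:dual} and \ref{prop:dual2} chart-by-chart to identify each local integral with the $\langle\cdot,\cdot\rangle_W$ form on $\setR^n$. Summing over the exterior charts $(U_i,\phi_i)$ and the interior charts $(V_j,\varTheta_j)$ and comparing with the definition (\ref{eq:bilinear:1}), one obtains the key identity
\begin{equation*}
\langle X,Y\rangle_{(\M,g)}=\langle \sqrt{|g|}\,g^{ab}X_a,\,Y_b\rangle_{\M},
\end{equation*}
which is then taken as the \emph{definition} of the extension on $W_{s-1,\delta+1}^p(\M)\otimes W_{1-s,-\delta-1}^{p'}(\M)$.

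The continuity estimate (\ref{eq:bilinear:2}) gives
\begin{equation*}
|\langle \sqrt{|g|}\,g^{ab}X_a,\,Y_b\rangle_{\M}|\leq C\,\|\sqrt{|g|}\,g^{ab}X_a\|_{W_{s-1,\delta+1}^p(\M)}\,\|Y\|_{W_{1-s,-\delta-1}^{p'}(\M)},
\end{equation*}
so the whole proof reduces to the multiplicative bound $\|\sqrt{|g|}\,g^{ab}X_a\|_{W_{s-1,\delta+1}^p(\M)}\leq C_g\|X\|_{W_{s-1,\delta+1}^p(\M)}$. Since $(\M,g)$ is asymptotically flat of class $W_{s,\delta}^p$ and uniformly equivalent to the Euclidean metric in each exterior chart (condition 3 of Definition~\ref{def:asymp}), both $|g|$ and the components $g^{ab}$ are bounded and bounded away from singular values. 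Applying Proposition~\ref{prop:Moser} to the smooth maps $t\mapsto\sqrt{t}$ and $M\mapsto M^{-1}$ (acting on the $W_{s,\delta}^p$ perturbation $g_{ab}-\delta_{ab}$) together with the algebra property coming from Proposition~\ref{prop:4}, one gets $\sqrt{|g|}\,g^{ab}-\delta^{ab}\in W_{s,\delta}^p(\M)$. A second application of Proposition~\ref{prop:4} (in its manifold version, Proposition~\ref{prop:4M}) with $(s_1,\delta_1)=(s,\delta)$, $(s_2,\delta_2)=(s-1,\delta+1)$ and target $(s-1,\delta+1)$ then delivers the required bound; the hypotheses $s>\frac{n}{p}$, $s\geq\tfrac12$ and $\delta\geq-\frac{n}{p}$ translate exactly into $s_1+s_2>(s-1)+\frac{n}{p}$, $s_1+s_2\geq n\max\{0,\tfrac{2}{p}-1\}$ and $\delta_1+\delta_2+\frac{n}{p}\geq\delta+1$.

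Finally, the identification with the $L^2$-product when $s>1$ and $Y\in\mathcal{S}$ follows by density: for smooth $X$ of compact support, the chart-by-chart computation already gives $\langle X,Y\rangle_{(\M,g)}=(X,Y)_{(L^2,g)}$, and both sides are continuous in $X$ with respect to the $W_{s-1,\delta+1}^p(\M)$ topology (the left side by the continuity just proven, the right side because $s-1>0$ gives $X\in L^p_{\delta+1}$ via Remark~\ref{rem:2.7}, so the integrand is integrable against the Schwartz test tensor $Y$). The main obstacle is the verification that the composite nonlinear factor $\sqrt{|g|}\,g^{ab}$ lives in the right weighted Besov space; this is precisely where the combination of Proposition~\ref{prop:Moser} with the algebra and multiplication estimates is indispensable, and where the lower bound $s\geq\tfrac12$ (as opposed to $s\geq 1$ in Proposition~\ref{prop:9}) becomes the sharp threshold allowed by the second side condition of Proposition~\ref{prop:4}.
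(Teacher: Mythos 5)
Your proposal is correct and follows essentially the same route as the paper: the extension is defined through the identity $\langle X,Y\rangle_{(\M,g)}=\langle \sqrt{|g|}\,g^{ab}X_a,Y_b\rangle_{\M}$ obtained from the partition of unity exactly as in (\ref{eq:bilinear:4}), and continuity reduces, via (\ref{eq:bilinear:2}), to bounding $\sqrt{|g|}\,g^{ab}X_a$ in $W_{s-1,\delta+1}^p(\M)$ by combining Proposition \ref{prop:Moser} with the multiplication Proposition \ref{prop:4} (in its manifold form, Proposition \ref{prop:4M}), which is precisely what the paper invokes. The only slip is cosmetic: for the identification with $(X,Y)_{(L^2,g)}$ when $s>1$, the membership $X\in L^p_{\delta+1}$ follows from the norm equivalence of Theorem \ref{thm:2}, not from Remark \ref{rem:2.7}, which gives only a one-sided inclusion depending on $p$.
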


If $\varphi$ has compact support in a certain chart, then by
integration by parts, we obtain
\begin{equation*}
 (\nabla u,\nabla \varphi)_{(L^2,g)}=\int \sqrt{\det g}g^{ab}\partial_a u
\partial_b \varphi dx=-\int \Delta_g u \varphi d\mu_g.  
\end{equation*}  
Note that $W_{0,\delta}^p\neq L_\delta^p$ (see Remark \ref{rem:2.7}), however,
if
$u\in W_{s,\delta}^p(\M)$ and $s\geq 1$, then  by Theorem \ref{thm:2} $\nabla
u\in L^p_{\delta+1}$. Therefore it follows from Propositions \ref{prop:dual} and
\ref{prop:dual2} that $(\nabla u,\nabla \varphi)_{(L^2,g)}= \langle \nabla u,
\nabla \varphi\rangle_{(\M,g)}$, whenever $\varphi$ is
smooth.
This justifies the following definition.
\begin{defn}[Weak solutions]
\label{defn:weak solutions}
  Let $a_0, f\in W_{s-2,\delta+2}^p(\M)$ and $s\geq  1$.   A distribution $u\in
W_{s,\delta}^p(\M)$ is a weak solution of the
equation
\begin{equation}
\label{eq:Weighted_fractional_p:1}
 -\Delta_g u +a_0 u=f,
\end{equation}  
if
\begin{equation}
\label{eq:Weighted_fractional_p:2}
 (\nabla u,\nabla \varphi)_{(L^2,g)}
+\langle a_0 u,\varphi\rangle_{(\M,g)}=\langle 
f,\varphi\rangle_{(\M,g)}\quad \text{for all} \ \varphi\in
C_0^\infty(\M).
\end{equation}
 In the case that
  (\ref{eq:Weighted_fractional_p:1}) were an inequality, then the
  equality in (\ref{eq:Weighted_fractional_p:2}) would be replaced by
  the corresponding inequality and the test functions would be 
    non--negative.

\end{defn}

Next  we prove the weak maximum principle for the operator $-\Delta_g+a_0$
when $a_0\geq0$. For $p=2$ it was
proven by Maxwell \cite{maxwell06:_rough_einst}, {and on compact 
manifolds in
the $W^p_s$-spaces  by Holst, Nagy and Tsogtgerel
\cite{Holst_Nagy_Tsogtgerel}.} 
We recall that the distribution $a_0\geq 0$ if and only if
$\langle
a_0,\varphi\rangle_{(\M,g)}\geq 0$
for all non--negative $\varphi\in C_0^\infty(\M)$.
\begin{lem}
\label{lem:5}
 Assume  $(\M,g)$ is an asymptotically flat manifold  of the class
$W_{s,\delta}^p$, $a_0\geq 0$, $a_0\in W_{s-2,\delta+2}^p$, $s\in
(\frac{n}{p},\infty)\cap [1,\infty)$ and $
\delta> -\frac{n}{p}$.
If $u\in  W_{s,\delta}^p(\M)$ satisfies
\begin{equation}
\label{eq:elliptic18}
 -\Delta_g u+a_0u\leq 0,
\end{equation}
then $u\leq 0$ in $\M$.
\end{lem}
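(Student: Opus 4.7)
The plan is to test the weak inequality against a compactly supported truncation of the positive part $u_+$, derive that both the Dirichlet energy and the potential term vanish, and then conclude using that $u$ decays at infinity. Since $s>\frac{n}{p}$ and $\delta+\frac{n}{p}>0$, the embedding in Proposition \ref{prop:2}(c) (transported to $\M$ via the norm (\ref{eq:norm-M})) gives $u\in C_\beta^0(\M)$ for some $\beta>0$; in particular $u$ is continuous and decays along each asymptotically flat end. Consequently, for every $\epsilon>0$ the super-level set $\Omega_\epsilon:=\{x\in\M: u(x)>\epsilon\}$ is open and relatively compact in $\M$.

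Fix $\epsilon>0$ and set $v_\epsilon:=(u-\epsilon)_+$. Working chart-by-chart, Proposition \ref{prop:5a} (with $\beta=1$) yields $v_\epsilon\in W_{s,\delta}^p(\M)$, and since $\mathrm{supp}\,v_\epsilon\subset\overline{\Omega_\epsilon}$ is compact, $v_\epsilon$ is Lipschitz with $\nabla v_\epsilon=\chi_{\Omega_\epsilon}\nabla u\in L^\infty$ of compact support. In particular $v_\epsilon$ lies in the dual spaces $W_{2-s,-\delta-2}^{p'}(\M)$ and $\nabla v_\epsilon$ in $W_{1-s,-\delta-1}^{p'}(\M)$, so by density of $C_0^\infty(\M)$ in these spaces the inequality (\ref{eq:Weighted_fractional_p:2}) extends to test function $v_\epsilon$ (choosing an approximating sequence of non-negative $C_0^\infty$ functions):
\begin{equation*}
(\nabla u,\nabla v_\epsilon)_{(L^2,g)} + \langle a_0 u,\, v_\epsilon\rangle_{(\M,g)} \leq 0.
\end{equation*}
Both terms are non-negative: since $\nabla v_\epsilon=\nabla u$ on $\Omega_\epsilon$ and vanishes elsewhere, the first equals $\int_{\Omega_\epsilon}|\nabla u|_g^2\,d\mu_g\geq 0$; for the second, the product $u v_\epsilon$ is a non-negative element of $W_{s,\delta}^p(\M)$ with compact support, so by approximation with non-negative $\psi_k\in C_0^\infty(\M)$ and the distributional hypothesis $a_0\geq 0$, one obtains $\langle a_0 u, v_\epsilon\rangle_{(\M,g)}=\langle a_0,u v_\epsilon\rangle_{(\M,g)}\geq 0$ (the reassociation being justified by the multiplication properties of Propositions \ref{prop:4} and \ref{prop:11} together with Proposition \ref{prop:dual2}).

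Both terms therefore vanish. In particular $\nabla u\equiv 0$ a.e.\ on $\Omega_\epsilon$, so $u$ is constant on each connected component of $\Omega_\epsilon$; but by continuity $u=\epsilon$ on $\partial\Omega_\epsilon$ while $u>\epsilon$ inside $\Omega_\epsilon$, which forces $\Omega_\epsilon=\emptyset$, i.e.\ $u\leq\epsilon$ on $\M$. Letting $\epsilon\downarrow 0$ gives $u\leq 0$. The main obstacle is the admissibility step in the middle paragraph: confirming that the Lipschitz but non-smooth truncation $v_\epsilon$ can be inserted into the weak formulation and that the distributional pairing $\langle a_0 u,v_\epsilon\rangle_{(\M,g)}$ is indeed non-negative; this requires the compact support of $v_\epsilon$ (so weights play no role in local approximation), the fact that $s\geq 1$ (so $v_\epsilon$ has enough Sobolev regularity to lie in the appropriate dual spaces), and the reassociation of the product $a_0\cdot u\cdot v_\epsilon$ through the multiplication and duality results of Section \ref{sec:Besov}.
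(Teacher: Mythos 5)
Your overall strategy (test against a truncation of $u-\epsilon$, show both the Dirichlet term and the pairing with $a_0$ are non--negative, conclude from decay at infinity) is exactly the paper's strategy, and the end--game is fine. But the admissibility step -- which you yourself flag as the main obstacle -- contains a genuine gap. You justify $v_\epsilon=(u-\epsilon)_+\in W_{s,\delta}^p(\M)$ by invoking Proposition \ref{prop:5a} with $\beta=1$, but that proposition carries the hypothesis $0<s<\beta+\frac{1}{p}$, i.e.\ $s<1+\frac{1}{p}$ for $\beta=1$; the lemma, however, must cover every $s\in(\frac{n}{p},\infty)\cap[1,\infty)$. For $s\geq 1+\frac{1}{p}$ the claim is simply false: the truncation has a gradient that jumps across the free boundary $\{u=\epsilon\}$, so $(u-\epsilon)_+$ belongs to $W_{s'}^p$ locally only for $s'<1+\frac{1}{p}$, no matter how smooth $u$ is. Consequently $uv_\epsilon$ need not lie in $W_{s,\delta}^p(\M)$, and the reassociation $\langle a_0u,v_\epsilon\rangle=\langle a_0,uv_\epsilon\rangle\geq0$ cannot be run through the algebra property as you state it. (A smaller inaccuracy: for $\frac{n}{p}<s\leq\frac{n}{p}+1$ the function $u$ need not be Lipschitz, so ``$\nabla v_\epsilon\in L^\infty$'' also fails; this part is harmless since $\nabla v_\epsilon\in L^2$ with compact support suffices for the first term, but it signals the same overestimate of the truncation's regularity.)

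What is actually needed -- and what the paper does -- is to show that the product $uw$, with $w=\max\{u-\epsilon,0\}$ known only to lie in $W_1^q$ locally for suitable $q$, lands in the dual space $W_{2-s}^{p'}(\Omega_0)$ of the space containing $a_0$ locally. Since $2-s\leq 1$, it suffices to get $uw$ into $W_1^{p'}(\Omega_0)$ (or $W_1^{n'}(\Omega_0)$), and this is achieved via Zolesio's mixed--exponent multiplication (Proposition \ref{prop:7}) combined with the embedding Proposition \ref{prop:8}; because those embeddings only move integrability in one direction on a bounded set, the argument genuinely splits into the cases $p\geq2$ and $p\leq2$. Your proof would be repaired by replacing the appeal to Proposition \ref{prop:5a} with this duality/multiplication argument; as written, it only establishes the lemma in the restricted range $s<1+\frac{1}{p}$.
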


In order to prove it we need a pointwise multiplication in $W_s^p$ with
different
values of $p$. Such  properties were established in \cite[\S
4.4]{runst96:_sobol_spaces_fract_order_nemyt}, but for our needs it suffices to
use  Zolesio's formulation and his result~\cite{zolesio_77}.
\begin{prop}[Zolesio]
\label{prop:7}
 Let $0\leq s\leq\min\{s_1,s_2\}$, and $1\leq p_i,p<\infty$ be real
numbers satisfying
\begin{equation*}
 s_i-s\geq n\left(\frac{1}{p_i}-\frac{1}{p}\right) \quad \text{and}\quad 
s_1+s_2-s>n\left(\frac{1}{p_1}+\frac{1}{p_2}-\frac{1}{p}\right).
\end{equation*} 
Then the pointwise multiplication 
\begin{math}
 W_{s_1}^{p_1}(\setR^n) \times W_{s_2}^{p_2}(\setR^n) \to W_{s}^{p}(\setR^n)
\end{math} 
is continuous. 
\end{prop}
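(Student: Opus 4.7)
The plan is to prove the multiplication estimate by a Littlewood--Paley/paraproduct argument on $\setR^n$, which is the standard route and the one Zolesio follows. First, I would use the dyadic partition of unity $\{\phi_j\}_{j=0}^\infty$ from (\ref{eq:norm2}) to write $u=\sum_j u_j$ and $v=\sum_k v_k$ with $u_j=\mathcal{F}^{-1}(\phi_j\mathcal{F}u)$ and similarly for $v_k$. Then I would apply Bony's paraproduct decomposition
\begin{equation*}
uv \;=\; T_u v + T_v u + R(u,v),
\end{equation*}
where $T_u v=\sum_j S_{j-2}u\cdot v_j$ is the low-high piece, $T_v u=\sum_j S_{j-2}v\cdot u_j$ is the high-low piece, and $R(u,v)=\sum_j u_j\sum_{|k-j|\leq 2} v_k$ is the high-high (remainder) piece. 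Each summand in these three series is frequency-localized in a ring of size $\sim 2^j$ (or a ball for the remainder), which is what makes the $W_s^p=B_{p,p}^s$ norm of $uv$ computable term-by-term.

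Next I would estimate each piece using the Bernstein inequalities. For a function $w$ with Fourier support in a ball of radius $\lesssim 2^j$, Bernstein gives $\|w\|_{L^p}\lesssim 2^{jn(1/q-1/p)}\|w\|_{L^q}$ for $1\leq q\leq p$; combined with Young's inequality, this lets me upgrade $\|u_j\|_{L^{p_1}}$ and $\|v_k\|_{L^{p_2}}$ to a common $L^p$ bound of the product $u_j v_k$. The first hypothesis $s_i-s\geq n(1/p_i-1/p)$ is exactly what is needed to absorb the Bernstein losses when one raises each factor to an $L^q$-space adapted to $p$. Applying H\"older in the two physical factors then gives, for the low-high piece,
\begin{equation*}
\|S_{j-2}u\cdot v_j\|_{L^p}\;\lesssim\; 2^{jn(1/p_2-1/p)}\|S_{j-2}u\|_{L^{p_1}}\|v_j\|_{L^{p_2}},
\end{equation*}
and analogously for the other two pieces, with the roles of $u$ and $v$ switched or with both frequencies coupled.

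Finally I would collect these bounds into the $B_{p,p}^s$-norm of $uv$. For the paraproduct terms $T_u v$ and $T_v u$, since $S_{j-2}u$ is controlled uniformly in $L^{p_1}$ by $\|u\|_{W_{s_1}^{p_1}}$ (using $s_1\geq 0$ and the Sobolev embedding encoded in the first hypothesis), the sum reduces to $\sum_j 2^{jsp}\|v_j\|_{L^p}^p$, which is bounded by $\|v\|_{W_{s_2}^{p_2}}^p$ because of the first hypothesis applied to $v$. For the remainder $R(u,v)$, two high frequencies at scale $\sim 2^j$ combine into something whose Fourier support sits in a ball of radius $\sim 2^j$, and I would sum with weights $2^{jsp}$: this is the step where the strict inequality $s_1+s_2-s>n(1/p_1+1/p_2-1/p)$ enters, as it guarantees that the resulting geometric series in $j$ converges after applying H\"older in $j$ across the product of the two $\ell^{p_i}$-sequences $\{2^{js_i}\|u_j\|_{L^{p_i}}\}_j$.

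The main obstacle will be the endpoint bookkeeping in the remainder term $R(u,v)$: when one or both of the first-hypothesis inequalities are equalities, Bernstein saturates and one must use the strict inequality in the second hypothesis to recover summability in $j$. Apart from that, the three pieces are handled by the same template of Bernstein plus H\"older, and the conclusion follows by the norm equivalence $\|uv\|_{W_s^p}\simeq\bigl(\sum_j 2^{jsp}\|(uv)_j\|_{L^p}^p\bigr)^{1/p}$ and the triangle inequality applied to the paraproduct splitting.
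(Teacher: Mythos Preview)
The paper does not supply its own proof of this proposition: it is quoted verbatim as Zolesio's result and attributed to~\cite{zolesio_77}, with a pointer to \cite[\S4.4]{runst96:_sobol_spaces_fract_order_nemyt} for the broader theory. There is therefore no argument in the paper to compare against.

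Your paraproduct outline is the standard route to such bilinear Besov estimates and is essentially what underlies Zolesio's original proof and the modern treatments in Runst--Sickel and Bahouri--Chemin--Danchin. The overall architecture---Bony decomposition, Bernstein to change integrability exponents, H\"older to recombine, and the strict inequality $s_1+s_2-s>n(1/p_1+1/p_2-1/p)$ to sum the diagonal remainder---is correct. Two places deserve more care if you were to write this out in full: first, in the low--high piece $T_u v$ you cannot simply bound $\|S_{j-2}u\|_{L^{p_1}}$ and then pass to $L^p$ without invoking the hypothesis $s_1-s\geq n(1/p_1-1/p)$ together with an embedding or Bernstein step (your sentence ``controlled uniformly in $L^{p_1}$'' is true but does not yet land you in $L^p$); second, the hypotheses allow either sign of $1/p_i-1/p$, so the Bernstein inequality must be applied in the correct direction in each regime, which changes which factor absorbs the frequency loss. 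These are bookkeeping issues rather than genuine gaps, and once tracked the argument goes through.
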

 
We shall also need the following known embedding (see e.g. \cite[Theorem
6.5.1]{berg_lofstrom_76}). 
\begin{prop}
 \label{prop:8}
If $s-\frac{n}{p}\geq s_0-\frac{n}{p_0}$ and $p\leq p_0$, then the embedding 
$W_s^p(\setR^n) \to W_{s_0}^{p_0}(\setR^n)$ is continuous.
\end{prop}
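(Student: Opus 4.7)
The target is the unweighted Besov embedding $W_s^p(\setR^n)=B^s_{p,p}\hookrightarrow B^{s_0}_{p_0,p_0}=W^{p_0}_{s_0}$ under $s-\tfrac{n}{p}\geq s_0-\tfrac{n}{p_0}$ and $p\leq p_0$. My plan is to prove it directly from the Littlewood--Paley description in (\ref{eq:norm2}), the proof having two ingredients: a Bernstein (Nikolskii) inequality for the dyadic pieces of $u$, and the trivial embedding of sequence spaces $\ell^p\hookrightarrow\ell^{p_0}$ for $p\leq p_0$.

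First, set $u_j:=\mathcal{F}^{-1}(\phi_j\mathcal{F}(u))$. Since $\phi_j$ is supported in an annulus of radius comparable to $2^j$, $\widehat{u_j}$ is supported in a ball of radius $\lesssim 2^j$. Let $\tilde{\phi}_j\in C_0^\infty$ equal $1$ on $\mathrm{supp}(\phi_j)$ and have support in a slightly larger annulus; write $u_j=u_j*K_j$ where $K_j:=\mathcal{F}^{-1}(\tilde{\phi}_j)$. By rescaling the fixed kernels for $j=1,2,\ldots$ (and treating $j=0$ separately), $\|K_j\|_{L^r}\lesssim 2^{jn(1-1/r)}$ for every $r\in[1,\infty]$. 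Choosing $r$ with $1+\tfrac{1}{p_0}=\tfrac{1}{p}+\tfrac{1}{r}$ (so $r\geq 1$ because $p\leq p_0$) and applying Young's convolution inequality yields the Bernstein-type estimate
\begin{equation*}
\|u_j\|_{L^{p_0}}\ \lesssim\ 2^{jn(1/p-1/p_0)}\,\|u_j\|_{L^p},\qquad j\geq 0.
\end{equation*}

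Next I insert this into the $W^{p_0}_{s_0}$ norm. Setting $\sigma:=s_0+n(1/p-1/p_0)\leq s$ (by the hypothesis on indices), I get
\begin{equation*}
\|u\|_{W^{p_0}_{s_0}}^{p_0}=\sum_{j=0}^{\infty}\bigl(2^{js_0}\|u_j\|_{L^{p_0}}\bigr)^{p_0}\ \lesssim\ \sum_{j=0}^{\infty}\bigl(2^{j\sigma}\|u_j\|_{L^p}\bigr)^{p_0}\ \leq\ \sum_{j=0}^{\infty}\bigl(2^{js}\|u_j\|_{L^p}\bigr)^{p_0},
\end{equation*}
the last step using $\sigma\leq s$. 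Denoting $a_j:=2^{js}\|u_j\|_{L^p}$, it remains to compare $\sum_j a_j^{p_0}$ with $\sum_j a_j^{p}$. Since $p\leq p_0$, the standard inequality $\|a\|_{\ell^{p_0}}\leq\|a\|_{\ell^{p}}$ gives
\begin{equation*}
\Bigl(\sum_{j} a_j^{p_0}\Bigr)^{1/p_0}\ \leq\ \Bigl(\sum_j a_j^p\Bigr)^{1/p}=\|u\|_{W_s^p},
\end{equation*}
and raising to the $p_0$-power and combining with the previous display yields $\|u\|_{W^{p_0}_{s_0}}\lesssim \|u\|_{W^p_s}$.

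The main technical point is the Bernstein inequality for the Littlewood--Paley blocks; once it is in hand, the rest is bookkeeping of exponents and the elementary $\ell^p\subset\ell^{p_0}$ inclusion. I would quote Bernstein/Nikolskii from \cite{berg_lofstrom_76} or \cite{Bahouri_2011} rather than redo the Young estimate, so the argument reduces essentially to the two displayed lines above.
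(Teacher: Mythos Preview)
Your argument is correct. The paper does not supply its own proof of this proposition; it merely quotes it as a known embedding with a reference to \cite[Theorem 6.5.1]{berg_lofstrom_76}. Your direct derivation via the Bernstein--Nikolskii inequality on the Littlewood--Paley blocks followed by the elementary inclusion $\ell^p\hookrightarrow\ell^{p_0}$ is precisely the standard proof one finds in that reference, so there is nothing to compare.
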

\begin{rem}
 We shall use Propositions \ref{prop:7} and \ref{prop:8} on an open bounded set 
$\Omega_0\subset \M$. The extension of them to bounded subsets of manifolds can
be 
carried out by standard methods. To see this  
let $\{(V_j,\varTheta_j)\}$ be a finite 
collection of charts such that $\Omega_0\subset \cup_j V_j$ and let 
$\{\alpha_j\}$ be partition of unity subordinate to $\{V_j\}$. Then
\begin{equation*}
 \|u\|_{W_s^p(\Omega_0)}=\sum_j\|\varTheta^\ast(\alpha_ju)\|_{W_s^p(\setR^n)}
\end{equation*}
is a norm, and   applying Propositions \ref{prop:7} and \ref{prop:8} to each 
of the norms in $\setR^n$  yields the results on subsets of a manifold. 
\end{rem}

\textit{Proof of Lemma \ref{lem:5}.}  We will show that $u\leq
\epsilon$ for an arbitrary positive $\epsilon$. Since $\delta>-\frac{n}{p}$, 
$u$ tends to zero at each end of $\M$ by
Proposition \ref{prop:2}(c). Hence  $\{u>\epsilon\}$ is a bounded set in $\M$. 
Let $w:=\max\{u-\epsilon,0\}$ and $\Omega_0\subset\M$ be an open set such that 
$\supp(w)\Subset \Omega_0$.  We recall that if a certain function, say $v$, has
support in the closure of $\Omega_0$, then
$\left\|v\right\|_{W_{s,\delta}^p(\M)}\simeq
\left\|v\right\|_{W_{s}^p(\Omega_0)}$.  Because of the limitations of
the embeddings of Proposition \ref{prop:8}, we split the proof into
two cases, $p\geq 2$ and $p\leq 2$.

Starting with $p\geq 2$, we have that $W_{1}^p(\Omega_0)\subset
W_{1}^2(\Omega_0)$. Hence $w:=\max\{u-\epsilon,0\}$ also belongs to
$W_{1}^2(\Omega_0)$. We now claim that  $uw\in W_{2-s}^{p'}(\Omega_0)$, but
since  $2-s\leq 1$,  it suffices to show that $uw\in
W_1^{p'}(\Omega_0)$. Applying Proposition \ref{prop:7}, we have that 
\begin{equation*}
 \left\|uw\right\|_{W_{1}^{p'}(\Omega_0)}\lesssim
\left\|u\right\|_{W_{s}^{p}(\Omega_0)}\left\|w\right\|_{W_{1}^{p'}(\Omega_0)},
\end{equation*}
and since $p'\leq 2$, we have by the H\"older inequality that 
\begin{equation*}
 \left\|w\right\|_{ W_{1}^{p^\prime}(\Omega_0)} 
\lesssim
\left({\rm Vol}(\Omega_0,g)\right)^{\frac{p-2}{2p(p-1)}}\left\|w\right\|_{
W_{1}^{2}(\Omega_0)}^{\frac{p'}{p}}.
\end{equation*}
Hence  $uw$ belongs  $W_{2-s}^{p'}(\Omega_0)$, and   since
${a_0}_{\mid_{\Omega_0}}\in W_{s-2}^p(\Omega_0)$ the bilinear form 
$\langle a_0 ,uw\rangle_{(\M,g)}$ is finite.  Moreover $uw\geq0$, so     by 
the density property of Besov spaces  $\langle a_0,uw\rangle_{(\M,g)}\geq0$. 
Combining these with 
(\ref{eq:elliptic18}) and
(\ref{eq:Weighted_fractional_p:2}), we obtain that
\begin{equation*}
 0\leq \langle a_0 ,uw\rangle_{(\M,g)}=\langle a_0 u,w\rangle_{(\M,g)}
\leq -(\nabla u,\nabla w)_{(L^2,g)}=-(\nabla w,\nabla w)_{(L^2,g)}
\lesssim -\left\|\nabla w\right\|_{L^2(\Omega_0)}^2.
\end{equation*}
Thus $w$ is a constant in $\Omega_0$ and since it vanishes on the boundary,  it 
is identically zero in $\Omega_0$. Consequently 
$u\leq\epsilon$, and  that completes the proof when $p\geq 2$.

In the case of $p\leq 2$, we first claim that $a_0\in W_{-1}^n(\Omega_0)$.
To see this we apply Proposition \ref{prop:8} with $s_0=-1$ and 
    $p_0=n$, then the inequality $s-2-\frac{n}{p}\geq -1-\frac{n}{n}$ holds,
since
    $s\geq \frac{n}{p}$. The requirement $p\leq p_0=n$ holds since throughout
the paper  $n\geq 2$. So we conclude $a_0\in W_{-1}^n(\Omega_0)$.

Applying again Proposition \ref{prop:8} we have that $u\in W_1^n(\Omega_0)$,
hence   $w=\max\{u-\epsilon,0\}$ belongs to $w\in W_1^n(\Omega_0)$, and 
Proposition \ref{prop:7} yields that
 \begin{equation*}
      \left\|uw\right\|_{W_{1}^{n'}(\Omega_0)}\lesssim  
\left\|u\right\|_{W_{s}^{p}(\Omega_0)}\left\|w\right\|_{W_{1}^{n}(\Omega_0)}.
    \end{equation*}
Therefore by (\ref{eq:3}) the bilinear form $\langle
a_0,uw\rangle_{(\M,g)}$ is well 
defined. We now  complete the proof as in the
case $p\geq2$.
\hfill{$\square$}

Let $e $ be the metric such that in any local coordinates
$e_{ab}=\delta_{ab}$. In case $\M$ has one end, then $e$ is the Euclidean 
metric. 
For $t\in[0,1]$  we consider a family of metrics $tg+(1-t)e$.  
Then $\mathcal{M}$ equipped with this  metric is asymptotically flat of class 
$W_{s,\delta}^p$, since 
$\left(tg+(1-t)e\right)_{ab}-\delta_{ab}=t(g_{ab}-\delta_{ab})\in 
W_{s,\delta}^p(E_{r_{i}})$.
 So by the weak
maximum principle,  the operator 
\begin{equation}
\label{eq:elliptic:4}
 \Delta_{\{tg+(1-t){e}\}}+t a_0: W_{s,\delta}^p(\M)\to
W_{s-2,\delta+2}^p(\M)
\end{equation}
is injective for all $t\in [0,1]$. 
Lemma \ref{lem:elliptic1} implies the for $t=0$ the operator 
(\ref{eq:elliptic:4}) is an isomorphism, and by Corollary \ref{cor:2} it has a 
closed range. In this situation we can apply    standard homotopy 
arguments (see 
e.g. \cite[Lemma 2]{cantor79_global}) and  obtain the following lemma: 
\begin{lem}
\label{lem:1}
Assume  $(\M,g)$ is an asymptotically flat manifold  of the class
$W_{s,\delta}^p$, $a_0\geq 0$,  $a_0\in W_{s-2,\delta+2}^p$,
 $s\in (\frac{n}{p},\infty)\cap [1,\infty)$ and
$\delta\in(-\frac{n}{p},-2+\frac{n}{p^\prime})$. Then for any $f\in
W_{s-2,\delta+2}^p(\M)$ equation
\begin{equation*}
 -\Delta_g u+a_0u=f
\end{equation*} 
has a unique solution $u$ satisfying 
\begin{equation}
\label{eq:elliptic7}
 \left\| u \right\|_{W_{s,\delta}^p(\M)}\leq C\left\| f
\right\|_{W_{s-2,\delta+2}^p(\M)},
\end{equation} 
where the constant $C$ is independent on $f$.
\end{lem}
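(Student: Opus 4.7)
The plan is to implement the continuity/homotopy method along the family
\begin{equation*}
L_t := -\Delta_{tg+(1-t)e} + t\,a_0, \qquad t \in [0,1],
\end{equation*}
already introduced in (\ref{eq:elliptic:4}). First I would verify that each $L_t$ is a bounded map $W_{s,\delta}^p(\M) \to W_{s-2,\delta+2}^p(\M)$ and that $t \mapsto L_t$ is continuous in the operator norm. This follows from $(g_{ab}-\delta_{ab}) \in W_{s,\delta}^p$ together with the multiplication property of Proposition \ref{prop:4}, the derivative rule of Proposition \ref{prop:1}, and the Moser estimate of Proposition \ref{prop:Moser} applied to the inverse metric; so each $L_t$ lies in $\mathbf{Asy}(-\Delta,s,\delta,p)$ with coefficient norms that depend continuously on $t$.

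For each $t \in [0,1]$ the zero-order term $t\,a_0 \geq 0$, so the weak maximum principle (Lemma \ref{lem:5}) applied to both $u$ and $-u$ yields $\ker L_t = \{0\}$. Combining injectivity with the semi-Fredholm statement of Corollary \ref{cor:2} and the two-term estimate of Lemma \ref{lem:elliptic2} transported to $\M$, a standard compactness argument using the compact embedding in Proposition \ref{prop:2}(b) absorbs the lower-order term and produces the a priori bound
\begin{equation*}
\|u\|_{W_{s,\delta}^p(\M)} \leq C_t\,\|L_t u\|_{W_{s-2,\delta+2}^p(\M)}.
\end{equation*}

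With injectivity and closed range in hand, set $T := \{t \in [0,1] : L_t \text{ is surjective}\}$. Then $0 \in T$ by Lemma \ref{lem:elliptic1} (extended to $\M$ through Definition \ref{def:asymp}: $L_0$ is the Euclidean Laplacian on each asymptotic end and, together with its action on the compact core, is an isomorphism). The set $T$ is open because isomorphisms form an open subset of bounded operators and $t \mapsto L_t$ is continuous. To show $T$ is closed, take $t_n \to t^*$ with $t_n \in T$ and any $f \in W_{s-2,\delta+2}^p(\M)$; let $u_n$ solve $L_{t_n} u_n = f$. If $\|u_n\|_{W_{s,\delta}^p}$ is unbounded, normalize $v_n := u_n/\|u_n\|_{W_{s,\delta}^p}$; by Proposition \ref{prop:2}(b) a subsequence converges in $W_{s-1,\delta'}^p$, the a priori estimate plus operator-norm continuity of $t \mapsto L_t$ upgrades this to convergence in $W_{s,\delta}^p$, and passage to the limit produces a nonzero $v \in \ker L_{t^*}$, contradicting injectivity. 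Hence $\{u_n\}$ is bounded, the same compactness argument extracts a limit $u$ with $L_{t^*} u = f$, and $t^* \in T$. Therefore $T = [0,1]$, so $L_1 = -\Delta_g + a_0$ is an isomorphism, and (\ref{eq:elliptic7}) is just the boundedness of its inverse.

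The principal obstacle is the closedness of $T$: one must ensure that the constants $C_t$ in the a priori estimate do not blow up along the homotopy, so that a solution can be extracted at $t^*$. Both ingredients for this are already in place --- the semi-Fredholm property of Corollary \ref{cor:2}, which delivers closed range uniformly along the family, and the weak maximum principle of Lemma \ref{lem:5}, which guarantees the triviality of $\ker L_{t^*}$ needed to rule out a nontrivial limit in the contradiction step.
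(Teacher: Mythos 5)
Your proposal is correct and follows essentially the same route as the paper: the paper establishes this lemma in the paragraph preceding its statement by the very same homotopy $\Delta_{tg+(1-t)e}+ta_0$, injectivity for all $t$ from the weak maximum principle (Lemma \ref{lem:5}), the isomorphism at $t=0$ from Lemma \ref{lem:elliptic1}, and closed range from Corollary \ref{cor:2}, after which it simply invokes ``standard homotopy arguments'' with a citation to Cantor. Your contribution is to write out that cited continuity-method step (openness via openness of isomorphisms, closedness via the compact embedding of Proposition \ref{prop:2}(b) and the a priori estimate), which matches the intended argument.
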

   
 \section{Semi--linear elliptic equations}
\label{sec:semi-linear}

In this section we establish an existence and uniqueness theorem for a
semi--linear equation whose principal part is the Laplace--Beltrami
operator on an asymptotically flat Riemannian manifold. The method of
sub and super solutions is used frequently in such types of problems,
however, we will employ a homotopy argument similar to the one
presented by Cantor
\cite{cantor79}. The authors applied this method in \cite{BK3} for
$p=2$ and $s\geq2$, and here, beside extending it to the
$W_{s,\delta}^p$-spaces, we simplify some of the steps of the proof by
computing the norm by means of the bilinear form  (\ref{eq:dual3}). The 
conditions of
Theorem \ref{thm2} below could be relaxed to some extensions, but we
refrain dealing with it here. 

Let
\begin{equation*}
 F(u,x):=h_1(u)m_1(x)+\cdots+h_N(u)m_N(x),
\end{equation*} 
be a function, where $h_i:(-1,\infty)\to [0,\infty)$ is $C^1$ non--increasing
function, $m_i\geq0$ and $m_i\in W_{s-2,\delta+2}^p(\M)$. The typical example
of $h_i(t)$ is $(1+t)^{-\alpha_i}$ with $\alpha_i>0$. 
\begin{thm}
\label{thm2}
 Assume  $(\M,g)$ is an asymptotically flat manifold  of the class
$W_{s,\delta}^p$, $a_0\in W_{s-2,\delta+2}^p$,
$a_0\geq 0$, $s\in (\frac{n}{p},\infty)\cap [1,\infty)$ and $
\delta\in(-\frac{n}{p},-2+\frac{n}{p^\prime})$.
Then the equation
\begin{equation*}
 -\Delta_g u+a_0u=F(u,\cdot)
\end{equation*}
has a unique non--negative solution $u\in W_{s,\delta}^p(\M)$.
\end{thm}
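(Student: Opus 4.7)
The plan is to carry out Cantor's homotopy argument, adapting the authors' $p=2$ treatment in \cite{BK3} to the present weighted Besov setting. Consider the family
\begin{equation*}
-\Delta_g u + a_0 u = tF(u,\cdot), \qquad t\in[0,1],
\end{equation*}
and let $J \subseteq [0,1]$ be the set of those $t$ for which this equation admits a non-negative $W_{s,\delta}^p(\M)$-solution. Since $u\equiv 0$ solves the $t=0$ problem, $0\in J$, and I would aim to show that $J$ is both open and closed.

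Before running the homotopy, observe that any solution is automatically non-negative: with $F(u,\cdot)\ge 0$ and $a_0\ge 0$, applying the weak maximum principle (Lemma \ref{lem:5}) to $-u$ gives $u\ge 0$; in particular $u>-1$, so $F(u,\cdot)$ is pointwise defined. The nonlinearity induces a $C^1$ map $u\mapsto F(u,\cdot)$ from $W_{s,\delta}^p(\M)$ to $W_{s-2,\delta+2}^p(\M)$: splitting $h_i(u) = h_i(0) + (h_i(u)-h_i(0))$, the Moser estimate (Proposition \ref{prop:Moser}) places the second summand in $W_{s,\delta}^p(\M)$, and multiplication (Proposition \ref{prop:4}, applicable thanks to $\delta > -n/p$ and $s > n/p$) brings the product with $m_i$ into $W_{s-2,\delta+2}^p(\M)$.

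Openness of $J$ then comes from the implicit function theorem applied to $\Phi(t,u) := -\Delta_g u + a_0 u - tF(u,\cdot)$. Its partial derivative in $u$ is $-\Delta_g + \bigl(a_0 - t\sum h_i'(u) m_i\bigr)$; since each $h_i$ is non-increasing and $m_i\ge 0$, the zeroth order coefficient remains non-negative, and Lemma \ref{lem:1} supplies the required isomorphism $W_{s,\delta}^p(\M) \to W_{s-2,\delta+2}^p(\M)$.

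The main obstacle, in my view, is the uniform $t$-independent \emph{a priori} bound needed for closedness: the naive route bounding $\|F(u,\cdot)\|$ by Moser plus multiplication is circular once fed back into Lemma \ref{lem:1}. The trick, as the authors flag at the start of the section, is to use the dual characterization \eqref{eq:dual3}. Setting $M := \sum h_i(0) m_i$, the pointwise inequality $0\le F(u,\cdot)\le M$, valid because $u\ge 0$ and each $h_i$ is non-increasing, together with the fact that $M - F(u,\cdot)\ge 0$ pairs non-negatively against non-negative test functions, yields
\begin{equation*}
\|F(u,\cdot)\|_{W_{s-2,\delta+2}^p(\M)} \;\le\; \|M\|_{W_{s-2,\delta+2}^p(\M)}.
\end{equation*}
Lemma \ref{lem:1} then gives $\|u\|_{W_{s,\delta}^p(\M)} \le C\|M\|_{W_{s-2,\delta+2}^p(\M)}$, uniformly in $t\in[0,1]$. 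Closedness follows by extracting a subsequence convergent in a lower-regularity space via the compact embedding Proposition \ref{prop:2}(b) and passing to the limit in the equation. Uniqueness of the solution at $t=1$ is obtained by writing $F(u,\cdot)-F(v,\cdot) = c(x)(u-v)$ through the mean value theorem, so that $u-v$ satisfies the linear equation $-\Delta_g(u-v) + (a_0-c)(u-v) = 0$ with $a_0 - c \ge 0$, from which Lemma \ref{lem:1} forces $u\equiv v$.
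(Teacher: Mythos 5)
Your proposal follows the paper's strategy in its essentials: Cantor's homotopy, non-negativity of solutions from the weak maximum principle (Lemma \ref{lem:5}), openness via the linearized operator and Lemma \ref{lem:1}, and, crucially, the same uniform a priori bound obtained from $0\le F(u,\cdot)\le\sum_i h_i(0)m_i$ together with the dual characterization (\ref{eq:dual3}). It departs from the paper at two points. For closedness the paper does not use compactness: it differentiates the equation in $\tau$, bounds $\|u_\tau\|_{W_{s,\delta}^p(\M)}$ uniformly by comparing $u_\tau$ with the solution $w$ of $-\Delta_g w+a_0w=F(u(\tau),\cdot)$ (maximum principle gives $0\le u_\tau\le w$, then (\ref{eq:dual3}) again), and concludes that $\tau\mapsto u(\tau)$ is Lipschitz in the $W_{s,\delta}^p$-norm, so the solutions along $\tau_k\to\tau_0$ form a Cauchy sequence. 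Your compactness route via Proposition \ref{prop:2}(b) can be completed, but the step you leave implicit is precisely the delicate one: since $s-2$ may be negative, $m_i$ can be a distribution of negative order, so passing to the limit in $\langle h_i(u_k)m_i,\varphi\rangle_{(\M,g)}$ requires $h_i(u_k)-h_i(u)\to 0$ in a multiplier space of order greater than $\frac{n}{p}$ (obtainable from uniform convergence through $W_{s',\delta'}^p\hookrightarrow C_\beta^0$, boundedness in $W_{s,\delta}^p$ via Proposition \ref{prop:Moser}, and interpolation, Theorem \ref{thm1}(d)), and you also need weak compactness to place the limit in $W_{s,\delta}^p(\M)$; the paper's Lipschitz-in-$\tau$ estimate buys strong convergence in $W_{s,\delta}^p(\M)$ directly and avoids all of this. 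For uniqueness the paper argues with the weak maximum principle on the sets $\{u_1>u_2\}$ and $\{u_1<u_2\}$, using only that $F$ is non-increasing in $u$; your mean-value linearization is sound in spirit, but to invoke Lemma \ref{lem:1} you must verify that $c=\sum_i\bigl(\int_0^1h_i'(u_2+t(u_1-u_2))\,dt\bigr)m_i$ actually belongs to $W_{s-2,\delta+2}^p(\M)$, which again calls for a Moser/multiplication argument (and more smoothness of $h_i'$ than the bare $C^1$ hypothesis) that the maximum-principle route dispenses with.
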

\begin{proof}
 We define a map $\Phi:\left(W_{s,\delta}^p(\M)\cap
\left\{u>-1\right\}\right)\times [0,1]\to W_{s-2,\delta+2}^p(\M)$ by
\begin{equation}
\label{eq:elliptic23}
 \Phi(u,\tau)=-\Delta_g u+a_0u-\tau F(u,\cdot)
\end{equation} 
and set $J=\{\tau\in[0,1]: \Phi(u,\tau)=0\}$. Lemma \ref{lem:1} implies that
$0\in J$ and therefore it suffices to show that $J$ is an open and closed set.
Since the functions $h_i$ are non--increasing, $\frac{\partial F}{\partial
u}(u,\cdot)\leq0$, and therefore the operator
\begin{equation*}
 Lw:=\left(\frac{\partial \Phi}{\partial u}(u,\tau)\right)w=-\Delta_g
w+a_0w-\tau
\frac{\partial F}{\partial u}(u,\cdot)w.
\end{equation*}
satisfies the assumptions of Lemma \ref{lem:1}.   Hence $\frac{\partial
\Phi}{\partial u}$ is an isomorphism  and this implies that  $J$ is an open
set (see e.g. 
\cite[\S17.2]{gilbarg83:_ellip_partial_differ_equat_secon_order}). 
The essential difficulty is to show that $J$ is a closed set. So
let $u(\tau)$ be a
solution to $\Phi(u,\tau)=0$. We first claim that there
is a positive constant $C_0$ independent of $\tau$ such that
\begin{equation}
\label{eq:elliptic5}
 \left\|u(\tau) \right\|_{W_{s,\delta}^p(\M)}\leq C_0.
\end{equation}

Now   the weak maximum principle, Lemma \ref{lem:5}, implies
 that $u(\tau)\geq0$, and hence, $h_i(u(\tau))\leq h_i(0)$.  Therefore
 by Proposition \ref{prop:9}, 
for any nonnegative
$\varphi\in C_0^\infty(\M)$ we have  that
\begin{equation}
\label{eq:elliptic6}
0\leq \langle h_i \left(u(\tau)\right)m_i,\varphi\rangle_{(\M,g)}
\leq \langle h_i \left(0\right)m_i,\varphi\rangle_{(\M,g)} \leq C_g
h_i(0)\left\| m_i\right\|_{W_{s-2,\delta+2}^p(\M)}\left\|
\varphi\right\|_{W_{2-s,-\delta-2}^{p'}(\M)}.
\end{equation}
Hence by formulas (\ref{eq:bilinear:1}), (\ref{eq:bilinear:4}) and inequality
(\ref{eq:dual3})
\begin{equation}
\label{eq:elliptic9}
 \|h_i\left(u(\tau)\right)m_i\|_{W_{s-2,\delta+2}^p(\M)}\leq C_g h_i(0)
\|m_i\|_{W_{s-2,\delta+2}^p(\M)}
\end{equation}
for $i=1,\ldots,N$.  By Lemma \ref{lem:1}, and inequalities (\ref{eq:elliptic7})
and  (\ref{eq:elliptic9})  we obtain that
\begin{equation*}
\left\| u(\tau)\right\|_{W_{s,\delta}^p(\M)}\leq C
\left\| F(u(\tau),\cdot)\right\|_{W_{s-2,\delta+2}^p(\M)}\leq C C_g
\sum_{i=1}^N h_i(0)\left\| m_i\right\|_{W_{s-2,\delta+2}^p(\M)}=:C_0,
\end{equation*}
and that proves  (\ref{eq:elliptic5}).

Differentiating (\ref{eq:elliptic23}) with respect to $\tau$ gives
\begin{equation}
\label{eq:elliptic8}
 -\Delta_g u_\tau+a_0 u_\tau-\tau\frac{\partial F}{\partial u}(u(\tau),
\cdot)u_\tau=F(u(\tau),\cdot),
\end{equation}
 where $u_\tau$ denotes the derivative of $u(\tau)$ with respect to $\tau$.
By Propositions \ref{prop:4} and \ref{prop:Moser}, both
$\left\|F(u(\tau),\cdot)\right\|_{W_{s-2,\delta+2}^p(\M)}$ and 
$\left\|\frac{\partial F}{\partial
u}(u(\tau),\cdot)\right\|_{W_{s-2,\delta+2}^p(\M)}$ are bounded by
$\left\|u(\tau)\right\|_{W_{s,\delta}^p(\M)}$. In addition,  $\frac{\partial
F}{\partial u}\leq 0$, thus the operator (\ref{eq:elliptic8}) satisfies the
conditions of Lemma \ref{lem:1}, and hence it possesses  a solution $u_\tau$
in $W_{s,\delta}^p(\M)$. 
  
We now show that $\left\|u_\tau\right\|_{W_{s,\delta}^p(\M)}$ is bounded by a
constant independent of $\tau$. By Lemma \ref{lem:1}, equation 
\begin{equation*}
 -\Delta_g w+a_0w=F(u(\tau),\cdot).
\end{equation*}
has a solution $w$ that satisfies the inequality
$\left\|w\right\|_{W_{s,\delta+}^p(\M)}\leq C
\left\|F(u(\tau),\cdot)\right\|_{W_{s-2,\delta+2}(\M)}$.  Since the bound 
of $\left\|F(u(\tau),\cdot)\right\|_{W_{s-2,\delta+2}(\M)}$ is independent of 
$\tau$ by  (\ref{eq:elliptic5}), we conclude that 
\begin{equation*}
 \left\|w\right\|_{W_{s,\delta}^p(\M)}\leq K
\end{equation*}
and the constant $K$ is independent of $\tau$. From the weak maximum principle,
Lemma \ref{lem:5}, we get that  $u_\tau\geq0$ and hence 
$(-\Delta_g +a_0)(w-u_\tau)=-\tau\frac{\partial
F}{\partial u}(u(\tau),\cdot)u_\tau\geq 0$.
Thus $(w-u_\tau)\geq 0$, again by
the maximum principle, and by  
Proposition \ref{prop:9} we have that
\begin{equation*}
 0\leq \langle u_\tau,\varphi\rangle_{(\M,g)}\leq  
 \langle w,\varphi\rangle_{(\M,g)}\leq C_g
\left\| w\right\|_{W_{s,\delta}^p(\M)}\left\|
\varphi\right\|_{W_{-s,-\delta}^{p'}(\M)}\leq C_g K
\left\|\varphi\right\|_{W_{-s,-\delta}^{p'}(\M)}  
\end{equation*}
 for any non--negative $\varphi\in C_0^\infty(\M)$. Thus using again  the dual 
estimate of the norm (\ref{eq:dual3}),  we have obtained that
\begin{equation*}
 \left\|u_\tau\right\|_{W_{s,\delta}^p(\M)}\leq C_g K,
\end{equation*}

This implies  that the norm $ \left\| u(\tau)\right\|_{W_{s,\delta}^p(\M)}$
is a
Lipschitz function   of $\tau$, that is, 
$|\left\|u(\tau_1)\right\|_{W_{s,\delta}^p(\M)}-\left\|u(\tau_2)\right\|_{W_{s,
\delta}^p(\M)}|\leq C_gK|\tau_1-\tau_2|$.
Therefore if $\{\tau_k\}\subset J$ and
$\tau_k\to \tau_0$,  then $\{u(\tau_k)\}$ is a Cauchy sequence in
$W_{s,\delta}^p(\M)$
and hence $J$ is a closed set. That completes the proof of the existence.

As for  the uniqueness, assume there are two different solutions
$u_1$ and $u_2$.
Then at least one of the sets $\Omega_+:=\{x\in \M: u_1(x)-u_2(x)>0\}$ or
$\Omega_-:=\{x\in \M: u_1(x)-u_2(x)>0\}$ is non--empty.
Suppose that $\Omega_+\neq\emptyset$, then $w=u_1-u_2$ satisfies the
equation $-\Delta_g w+a_0w=F(u_1)-F(u_2)\leq 0 $ in $\Omega_+$, since
$F(u,x)$ is non--increasing as a function of $u$.
Now the weak maximum principle, Lemma \ref{lem:5}, implies that $w\leq
0$ in $\Omega_+$ and hence it must be an empty set.
A similar contradiction occurs if $\Omega_-\neq\emptyset$.

\end{proof}

\section{The Brill--Cantor criterion}
\label{sec:brill-cantor-theorem}

 Let  $(\M,g)$ be an asymptotically flat manifold  of  class
$W_{s,\delta}^p$ and $R(g)$ be the scalar curvature. Throughout this section
$n\geq 3$. We set
$2^*=\frac{2n}{n-2}$ and $s_n=\frac{n-2}{4(n-1)}$.  
 Following
Choquet--Bruhat, Isenberg, and  York   \cite{y.00:_einst_euclid} and Maxwell
\cite{maxwell05:_solut_einst}, we define. 
\begin{defn}
\label{def:Yamabe}
An asymptotically flat manifold $(\M,g)$ is in the positive Yamabe class if 
\begin{equation}
\label{eq:brill-cantor}
 \inf_{\varphi\in C_0^\infty(\M)}\dfrac{(\nabla \varphi,\nabla\varphi)_{(L^2,g)}
+s_n\langle R(g),
\varphi^2\rangle_{(\M,g)}}{\left\|\varphi\right\|_{L^{2^*}}^2}>0.
\end{equation} 
\end{defn}
This condition is conformal invariant under the scaling
$g'=\phi^{\frac{4}{n-2}}g$ \cite{y.00:_einst_euclid}. For $s\geq 2$ the metric
$g\in W_{2,\delta}^p(\M)$ and by Theorem \ref{thm:2}, $R(g)\in L^p(\M)$. So in
this case formula (\ref{eq:bilinear:4}) implies that $\langle
R(g),\varphi^2\rangle_{(\M,g)}=(R(g),\varphi^2)_{(L^2,g)}$ and 
condition (\ref{eq:brill-cantor}) takes the common form
 \begin{equation*}
 \inf_{\varphi\in C_0^\infty(\M)}\dfrac{\int_\M\left((\nabla
\varphi,\nabla\varphi)_g+s_n  R(g)
\varphi^2\right)d\mu_g}{\left\|\varphi\right\|_{L^{2^*}}^2}>0.
\end{equation*} 

Though condition (\ref{eq:brill-cantor}) is similar to the Yamabe functional on
compact manifolds (\cite[Ch. 5]{Aubin_1998}, \cite[Ch. 7]{Choquet_2009}), it has
a different interpretation on asymptotically flat
manifolds, namely, in that case being  in the positive Yamabe class is 
equivalent to  the existence of a conformal flat metric. 
\begin{thm}
\label{thm:brill-cantor}
 Let  $(\M,g)$ be an asymptotically flat manifold  of the class
$W_{s,\delta}^p$, and assume that   $s\in
(\frac{n}{p},\infty)\cap [1,\infty)$ and   $\delta\in
(-\frac{n}{p},-2+\frac{n}{p^\prime})$. Then $(\M,g)$ is in the positive
Yamabe class if and only if there is a conformally equivalent metric $g'$ such
the $R(g')=0$. 
\end{thm}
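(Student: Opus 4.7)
The plan is to reformulate $R(g')=0$ as a linear PDE via the conformal transformation law, solve this PDE using the Yamabe hypothesis as the source of coercivity, and verify positivity of the resulting conformal factor. Writing $g' = \phi^{4/(n-2)} g$ gives the classical formula
\begin{equation*}
R(g') = \phi^{-\frac{n+2}{n-2}}\bigl(-s_n^{-1}\Delta_g \phi + R(g)\phi\bigr),
\end{equation*}
so $R(g')=0$ is equivalent to $L_Y \phi := -\Delta_g \phi + s_n R(g)\phi = 0$. Seeking $\phi = 1+u$ with $u\in W_{s,\delta}^p(\M)$ reduces the theorem to solving $L_Y u = -s_n R(g)$ with $1+u>0$. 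A repeated use of Propositions \ref{prop:1}, \ref{prop:4} and \ref{prop:Moser} applied to the coordinate formula for $R(g)$ shows $R(g)\in W_{s-2,\delta+2}^p(\M)$, so $L_Y \in {\bf Asy}(-\Delta,s,\delta,p)$ and Corollary \ref{cor:2} guarantees that $L_Y: W_{s,\delta}^p(\M)\to W_{s-2,\delta+2}^p(\M)$ is semi-Fredholm.

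Easy direction. If $g'=\phi^{4/(n-2)}g$ with $R(g')=0$ exists, the Yamabe functional (\ref{eq:brill-cantor}) is conformally invariant: the substitution $\varphi\mapsto \phi^{-1}\varphi$ combined with the transformation laws for $|\nabla\cdot|^2$ and the volume element leaves the quotient unchanged. Computed in the metric $g'$ with $R(g')=0$, condition (\ref{eq:brill-cantor}) reduces to the Sobolev inequality $\|\varphi\|_{L^{2^*}}^2 \lesssim \|\nabla\varphi\|_{L^2(\M,g')}^2$, which holds on any asymptotically flat manifold of the present class by localizing to the charts $\{U_i, V_j\}$ through a partition of unity, applying the Euclidean Sobolev inequality chart-wise, and invoking the uniform metric equivalence of condition 3 in Definition \ref{def:asymp}.

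Hard direction. Under the Yamabe hypothesis, $L_Y$ must be shown to be an isomorphism. For injectivity, suppose $L_Y u=0$; by Proposition \ref{prop:2}(c) and the manifold analog of Proposition \ref{prop:10}, $u$ enjoys improved decay. A cutoff-and-mollify procedure produces $u_k \in C_0^\infty(\M)$ with $u_k \to u$ such that the extended bilinear forms of Propositions \ref{prop:9} and \ref{prop:17} give
\begin{equation*}
B(u_k,u_k) := (\nabla u_k,\nabla u_k)_{(L^2,g)} + s_n\langle R(g),u_k^2\rangle_{(\M,g)} \longrightarrow B(u,u).
\end{equation*}
Pairing $L_Y u=0$ against $u$ itself through these same extended forms gives $B(u,u)=0$, while the Yamabe inequality on the $u_k$ passes to the limit to force $\|u\|_{L^{2^*}}^2 \leq 0$, so $u\equiv 0$. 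For surjectivity, $L_Y$ is formally self-adjoint with respect to $\langle\cdot,\cdot\rangle_{(\M,g)}$, so the dual operator $L_Y^*: W_{-s+2,-\delta-2}^{p'}(\M)\to W_{-s,-\delta}^{p'}(\M)$ has the same form; since $-\delta-2 \in (-n/p', -2+n/p)$, the injectivity argument applies verbatim to $L_Y^*$, making the range of $L_Y$ dense. Combined with the closed range from the semi-Fredholm property, $L_Y$ is an isomorphism and $L_Y u=-s_n R(g)$ has a unique solution.

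Positivity and main obstacle. Setting $\phi := 1+u$, we have $\phi\to 1$ at infinity and $L_Y\phi=0$. Testing against $\phi^- := \min(\phi,0)$ (which has compact support by the decay of $u$) and approximating by $C_0^\infty(\M)$ functions yields $B(\phi^-,\phi^-)=0$, so Yamabe positivity forces $\phi^-\equiv 0$, giving $\phi\geq 0$. The strong maximum principle applied to $-\Delta_g\phi = -s_n R(g)\phi$ (valid under the local regularity at hand) prevents $\phi$ from vanishing at an interior point, whence $\phi>0$ and $g' := \phi^{4/(n-2)}g$ satisfies $R(g')=0$. The main obstacle is precisely the injectivity step: because $R(g)$ need not have a sign, the weak maximum principle of Lemma \ref{lem:5} is unavailable and one must carefully extend the $C_0^\infty$ Yamabe inequality to solutions $u \in W_{s,\delta}^p$, which requires the improved decay of Proposition \ref{prop:10} together with the full bilinear-form calculus of Propositions \ref{prop:9} and \ref{prop:17}.
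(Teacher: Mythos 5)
Your overall reduction follows the paper: write $g'=\phi^{4/(n-2)}g$, set $\phi=1+u$, and reduce to solving $-\Delta_g u+s_nR(g)u=-s_nR(g)$ in $W_{s,\delta}^p(\M)$; and your injectivity step (approximate a putative kernel element by $C_0^\infty$ functions, use continuity of the Yamabe numerator together with the improved decay of Proposition \ref{prop:10}, and let positivity of the Yamabe quotient force $u\equiv0$) is essentially the paper's argument, except that the paper runs it for the whole family $-\Delta_g+\tau s_nR(g)$, $\tau\in[0,1]$, and makes explicit that the continuity of the numerator rests on the trilinear estimate of Proposition \ref{prop:11} with the weight bookkeeping (\ref{eq:yam7})--(\ref{eq:yam10}). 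The genuine gap is your surjectivity step. You assert that, since $L_Y$ is formally self-adjoint, ``the injectivity argument applies verbatim'' to $L_Y^{\ast}\colon W_{-s+2,-\delta-2}^{p'}(\M)\to W_{-s,-\delta}^{p'}(\M)$. It does not: elements of the adjoint's domain have smoothness index $2-s\le 1$ (negative once $s>2$, and in any case far below $n/p'$), so none of the ingredients of the injectivity proof are available there --- there is no continuous/decaying representative, no way to form $\varphi^2$ or $|\nabla\varphi|_g^2$ in the spaces where the Yamabe estimates were proved, and Proposition \ref{prop:10} is stated for solutions $u\in W_{s,\delta}^p$ with $s>\frac{n}{p}$, not for distributional solutions of the adjoint with the rough coefficient $R(g)\in W_{s-2,\delta+2}^p$. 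Making your route work would require an elliptic regularity/bootstrap theorem for the adjoint in this low-regularity setting, which the paper neither proves nor needs: having injectivity for every $\tau$ and semi-Fredholmness (Corollary \ref{cor:2}), it invokes the homotopy argument of Lemma \ref{lem:1}, starting from the isomorphism at $\tau=0$, and surjectivity follows without ever touching the adjoint.

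A second, smaller gap is the positivity of $\phi$. Your truncation argument tests against $\phi^-=\min(\phi,0)$, but when $s<2$ the coefficient $R(g)$ is a genuine distribution, and neither the pairing $\langle R(g),(\phi^-)^2\rangle_{(\M,g)}$ nor the extension of the $C_0^\infty$ Yamabe inequality to $\phi^-$ (which has only about $W_{1,\mathrm{loc}}^{p}$ regularity, far below the $W_{s,\delta'}^p$ norm in which the numerator is continuous) is justified by the paper's estimates; likewise the ``strong maximum principle valid under the local regularity at hand'' is precisely the nontrivial low-regularity Harnack input. The paper instead proves $\phi>0$ by a continuity-in-$\lambda$ argument: it solves $-\Delta_g u_\lambda+\lambda s_nR(g)u_\lambda=-\lambda s_nR(g)$, uses the a priori estimate of Lemma \ref{lemma:3.5} and the compact embedding of Proposition \ref{prop:2} to get continuity of $\lambda\mapsto\phi_\lambda$ in $C_\beta^0$, and applies the Harnack inequality from the cited references at a first parameter where $\phi_\lambda$ touches zero. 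Finally, note that the converse implication, which you sketch via a chart-wise Sobolev inequality, is only cited in the paper; as written, your partition-of-unity argument gives the Sobolev inequality only up to a lower-order term supported on a compact set, which still has to be absorbed.
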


This type of result was first proven  in \cite{cantor81:_laplac} for
$s>\frac{n}{p}+2$ and $1<p<\frac{2n}{n-2}$.  Since then the 
regularity assumptions  were improved by several
authors
\cite{Choquet_2009, Choquet_Isenberg_Pollack, y.00:_einst_euclid,
  maxwell05:_solut_einst}, however, they dealt only with Sobolev spaces of
integer order, and when  $s=2$ they are restricted to
$p>\frac{n}{2}$. For $p=2$ it was  proven for
all $s>\frac{n}{2}$ in \cite{maxwell06:_rough_einst}.
Thus Theorem \ref{thm:brill-cantor}
improves regularity and extends the range of $p$ to $(1,\infty)$.

\textit{Proof of Theorem \ref{thm:brill-cantor}.} We prove only that condition
(\ref{eq:brill-cantor}) implies the existence of a
flat metric. The converse assertion requires no special attention of the
weighted Besov spaces and we refer to \cite{Choquet_Isenberg_Pollack,
y.00:_einst_euclid} for this part.

 We consider the following  conformal transformation $g'=\phi^{\frac{4}{n-2}}g$.
It is known that the metric $g'$ has scalar curvature zero if and only if
equation 
(see e.g. \cite{Aubin_1998})
\begin{equation}
\label{eq:yam2}
 -\Delta_g\phi+s_nR(g)\phi=0,  
\end{equation} 
possesses a positive solution $\phi$ such  that $\phi-1\in W_{s,\delta}^p(\M)$.
Setting $u=\phi-1$, then (\ref{eq:yam2}) becomes
\begin{equation}
\label{eq:yam3}
 -\Delta_gu+s_nR(g)u=-s_n R(g).  
\end{equation} 
In order to assure that equation (\ref{eq:yam3}) has a solution, 
it suffices to show that  the operator $-\Delta_g+\tau s_nR(g)$
has a trivial kernel for each
$\tau\in [0,1]$. The crucial point is to  estimate  the numerator  of
(\ref{eq:brill-cantor}) in terms of the $W_{s,\delta}^p(\M)$-norm. Starting
with the second term, we have by Proposition \ref{prop:9} that
\begin{equation}
\label{eq:yam8}
 |\langle R(g),\varphi^2\rangle_{(\M,g)}|=|\langle
R(g)\varphi^2,1\rangle_{(\M,g)}|\lesssim
\left\|R(g)\varphi^2\right\|_{W_{s-2,\delta''}^p(\M)}\left\| 1\right\|_{W_{2-s,
-\delta''}^{p'}(\M)}.
\end{equation} 
Obviously, $\left\|
1\right\|_{W_{2-s, -\delta''}^{p'}(\M)}\leq \left\|
1\right\|_{W_{1, -\delta''}^{p'}(\M)}$ and the last term  is finite if
$\delta''>\frac{n}{p'}$.
Take now  $\delta'$ satisfying the condition
\begin{equation}
 \label{eq:yam7}
\frac{n}{p'}<\delta''\leq \delta+2+2\delta'+\frac{2n}{p},
\end{equation} 
 and  Proposition \ref{prop:11}, with $\delta_1=\delta+2$ and
$\delta_2=\delta_3=\delta'$, then we obtain that
\begin{equation}
\label{eq:yamb11}
 \left\|R(g)\varphi^2\right\|_{W_{s-2,\delta''}^p(\M)}\lesssim
\left\|R(g)\right\|_{W_{s-2,\delta+2}^p(\M)}
\left(\left\|\varphi\right\|_{W_{s,\delta'}^p(\M)}\right)^2.
\end{equation} 

For the first  term of the numerator  of
(\ref{eq:brill-cantor}), we  have by the identity (\ref{eq:bilinear:4})  that  
 \begin{equation*}
 (\nabla\varphi,\nabla\varphi)_{(L^2,g)} =(|\nabla
\varphi|^2_{{g}},1)_{(L^2,{g})}=\langle \sqrt{|g|}g^{ab}\partial_a
\varphi\partial_b\varphi,1\rangle_\M.
\end{equation*}
So by inequality (\ref{eq:bilinear:2}),
\begin{equation}
\label{eq:yam6}
|(\nabla\varphi,\nabla\varphi)_{(L^2,g)}|\
\lesssim
\left\|\sqrt{|g|}g^{ab}\partial_a
\varphi\partial_b\varphi\right\|_{W_{s-1,\delta''}^p(\M)}
\left\|1\right\|_{W_{1-s,-\delta''}^{p'}(\M)}.
\end{equation} 
As in the previous term, $\left\|1\right\|_{W_{1-s,-\delta''}^{p'}(\M)}$ is
finite if $\delta''>\frac{n}{p}$.
Writing 
\begin{equation*}
 \sqrt{|g|}g^{ab}\partial_a
\varphi\partial_b\varphi=\left(\sqrt{|g|}g^{ab}-\delta^{ab}\right)\partial_a
\varphi\partial_b\varphi+ \partial^a\varphi\partial_a\varphi
\end{equation*}
and assuming that $\delta'$ satisfies (\ref{eq:yam7}),
we then  can apply again Propositions
\ref{prop:11}, with $\delta_1=\delta$ and
$\delta_2=\delta_3=\delta'+1$, Proposition \ref{prop:Moser}, and get
that
\begin{equation}
\label{eq:yamb12}
 \left\|\left(\sqrt{|g|}g^{ab}-\delta^{ab}\right)\partial_a
\varphi\partial_b\varphi\right\|_{W_{s-1,\delta''}
^p(\M)}\lesssim  \left\|g-1\right\|_{W_{s,\delta}
^p(\M)} \left( \left\|
|\nabla\varphi|\right\|_{W_{s-1,\delta'+1}^p(\M)} \right)^2.
\end{equation}
By Proposition \ref{prop:4},
\begin{equation}
\label{eq:yamb13}
 \left\| |\nabla\varphi|^2\right\|_{W_{s-1,\delta''}
^p(\M)}\lesssim  \left( 
\left\||\nabla\varphi|\right\|_{W_{s-1,\delta'+1}^p(\M)}\right)^2
\end{equation}
whenever $\delta'$ also  satisfies the condition
\begin{equation}
 \label{eq:yam10}
\frac{n}{p'}<\delta''\leq2(\delta'+1)+\frac{n}{p}.
\end{equation}

We are now in a position to show that if $(\M,g)$ is in the positive Yamabe
class, then $-\Delta_g+\tau s_nR(g)$ is an injective operator. For
$\tau=0$ it is
injective by the weak maximum principle. For
each $\tau\in (0,1]$, we assume the contrary, that is, there is $0\not\equiv
u\in
W_{s,\delta}^p(\M)$ such that $-\Delta_g u+\tau s_nR(g)u=0$. Then by
Proposition
\ref{prop:10}, $u\in W_{s,\delta'}^p(\M)$ for any $\delta'\in
(-\frac{n}{p},-2+\frac{n}{p'})$. We can always choose $\delta'\in
(-\frac{n}{p},-2+\frac{n}{p'})$ so that both 
(\ref{eq:yam7}) and (\ref{eq:yam10}) hold for any given
$\delta$ in $(-\frac{n}{p},-2+\frac{n}{p'})$. Choosing such $\delta'$ and
taking  a sequence   $\{\varphi_k\}\subset C_0^\infty(\M)$ such that
$\varphi_k\to u$ in $W_{s,\delta'}^p(\M)$, 
 then   inequalities (\ref{eq:yam8}), (\ref{eq:yamb11}),
(\ref{eq:yam6}),(\ref{eq:yamb12}) and (\ref{eq:yamb13})  
imply that the numerator of  (\ref{eq:brill-cantor}) is bounded by  
$\left\|\varphi_k\right\|_{W_{s,\delta'}^p(\M)}$.  Hence we may pass to the
limit and obtain that
\begin{equation*}
\begin{split}
 0 & = (\nabla u,\nabla \varphi_k)_{L^2(\M,g)}+\tau s_n\langle
R(g)u, \varphi_k\rangle_{(\M,g)}
\\ &=   \lim_k \left((\nabla \varphi_k,\nabla \varphi_k)_{L^2(\M,g)}+\tau
s_n\langle
R(g), \varphi_k^2\rangle_{(\M,g)}\right)\\ & \geq\tau \lim_k \left((\nabla
\varphi_k,\nabla \varphi_k)_{L^2(\M,g)}+ s_n\langle
R(g), \varphi_k^2\rangle_{(\M,g)}\right).
\end{split}
\end{equation*} 
Since $(\M,g)$ is in the positive Yamabe class, the last term of the above
inequalities is positive and obviously this is a  contradiction.

Having shown that $-\Delta_g+\tau s_nR(g)$ is injective, we conclude by
Corollary \ref{cor:2} and the homotopy argument as in Lemma \ref{lem:1}
  that equation
(\ref{eq:yam3}) has a unique solution. Let 
$u$ be the solution and set $\phi=1+u$, then it remains to show that $\phi>0$. 
We follow here   \cite{cantor81:_laplac, maxwell05:_solut_einst}.  Let
$u_\lambda$ be a solution to
$-\Delta_g
u_\lambda+\lambda s_n
R(g) u_\lambda=-\lambda s_n R(g)$ and set $J=\{\lambda\in [0,1]:
\phi_\lambda(x)=1+u_\lambda(x)>0\}$. By Lemma \ref{lem:elliptic2} and  its
version on manifolds  Lemma \ref{lemma:3.5}, there is a constant $C$ 
independent of 
$\lambda$ and $\delta'<\delta$  such that
\begin{equation*}
 \left\|u_\lambda\right\|_{W_{s,\delta}^p(\M)}\leq C\left\{
\left\|\lambda s_n 
R(g)\right\|_{W_{s-2,\delta+2}^p(\M)}+\left\|u_\lambda\right\|_{W_{s-1,\delta'}
^p(\M)}\right\}.
\end{equation*} 
Now it follows from the compact embedding, Proposition
\ref{prop:2} (b) and the above estimate that   
$\left\|u_\lambda\right\|_{W_{s,\delta}^p(\M)}$ is  a continuous
function of $\lambda$. Hence, by the embedding into the continuous,
Proposition \ref{prop:2}(c), $\phi_\lambda-1$ is continuous  in $C_\beta^0$ for
some $\beta>0$ with respect to $\lambda$.
Thus $J$ is a open and non--empty set,
since $0\in J$. So if $J\not=[0,1]$, then there exists a $0<\lambda_0<1$ such
that
$\phi_{\lambda_0}\geq 0$. Then by the Harnack inequality  $\phi_{\lambda_0}>0$
and consequently $\phi_1=\phi>0$. For
details how to apply the Harnack inequality under the present regularity
assumption
see \cite[Lemma 35]{Holst_Nagy_Tsogtgerel} and \cite[lemma
5.3]{maxwell06:_rough_einst}. 
\hfill{$\square$}

\section{Applications to the Constraint Equations of the Einstein--Euler
Systems} 
\label{sec:appl-constr-equat} 
 
In this section we  describe briefly the initial data for the 
Einstein--Euler system, for more details we refer to \cite{BK3,BK8}. 
In \cite{BK3} we constructed the initial data in the Hilbert space 
$W_{s,\delta}^2(\M)$ and here we apply the results of the previous 
sections in order to construct the initial data in the weighted Besov spaces 
$W_{s,\delta}^p(\M)$ for $1<p<\infty$. 
 
The Einstein--Euler system describes a relativistic self--gravitating perfect
fluid. The fluid quantities are the energy density $\rho$, the pressure $p$ and
a unite time--like velocity vector $u^\alpha$. In this section Greek indexes
take the values $0,1,2,3$. The evolution of the gravitational
fields is described by the Einstein equations
\begin{equation*} 
  R_{\alpha\beta}-\frac{1}{2} \   g_{\alpha\beta}R = 8\pi T_{\alpha\beta}, 
\end{equation*} 
where $g_{\alpha\beta}$ is a semi Riemannian metric with signature
$(-,+,+,+)$, $R_{\alpha\beta}$ is the Ricci curvature tensor and
$T_{\alpha\beta}$ is the energy--momentum tensor of the matter, which
in the case of a perfect fluid  takes the form
\begin{equation}
\label{eq:eineul:2} 
  T^{\alpha\beta} = (\rho+p) u^\alpha u^\beta + 
  p\ g^{\alpha\beta}. 
\end{equation} 
The evolution of the fluid 
is described by the Euler equations $\nabla_\alpha T^{\alpha\beta}=0$. This
system contains more unknowns than equations and
therefore an additional relation is indispensable. 
The usual strategy is to
introduce an equation of state, which connects $p$ and $\rho$. Here we 
consider the analogue of the non--relativistic polytropic equation of 
state and that is  given by 
\begin{equation} 
  \label{eq:ee8} 
  p = p(\rho) = \kappa\rho^{\gamma}, \qquad 
  1<\gamma, \qquad \kappa\in \setR^{+}. 
\end{equation} 
 
In the context of astrophysics, isolated systems 
cannot  have a density that is bounded below by
a positive constant.
It either falls off at infinity, or has compact support.
That causes the corresponding symmetric hyperbolic system to
degenerate (see \cite{BK8} for details).
Following Makino
\cite{makino86:_local_exist_theor_evolut_equat_gaseous_stars}, we
regularize the symmetric hyperbolic system by the variable change
\begin{equation} 
\label{eq:ee6} 
 w=\rho^{\frac{\gamma-1}{2}}. 
\end{equation}

The initial data of the Einstein--Euler system are a proper Riemannian
metric $g$ and a symmetric $(2,0)$--tensor $K_{ab}$, given on a three
dimensional manifold $\M$.
The matter variables are $(z,j^a)$, where $z$ denotes the energy
density and $j^a$ the momentum density, and in addition, there are
initial data for the fluid.
These are the Makino variable $w$ and the velocity vector $u^\alpha$.
The data must satisfy the constraint equations
\begin{equation} 
    \label{eq:EE1} 
    \left\{ 
      \begin{array}{cccl} 
        R(g) - K_{ab}K^{ab}+(g^{ab}K_{ab})^2 &=&16\pi z& \text{Hamiltonian constraint}\\ 
        {}^{(3)}\nabla_b K^{ab}-  
        \nabla^b(g^{bc}K_{bc}) &=& -8\pi j^a& \text{Momentum
constraint} 
      \end{array}\right.. 
  \end{equation}  
 
Let $\tilde{u}^\alpha$ denote the projection of the velocity vector $u^\alpha$ 
on the initial manifold $\M$. The projections of the 
energy--momentum tensor $T_{\alpha\beta}$ twice
on the unit normal $n^\alpha$, once on $n^\alpha$ and once on $\M$, lead to
the following relations 
\begin{equation} 
  \label{eq:ee7} 
  \left\{ 
    \begin{array}{ccc} 
      z &= &\rho+(\rho+p)g_{ab}\tilde{u}^a\tilde{u}^b\\ 
      j^\alpha 
      &=&(\rho+p)\tilde{u}^a\sqrt{1+ g_{ab}\tilde{u}^a\tilde{u}^b} 
    \end{array}\right.. 
\end{equation}

We use the well--known conformal method for solving the constraint 
equations (\ref{eq:EE1}). This method starts by giving some free quantities 
 and the solutions of the constraints are obtained in the end by 
rescaling these with appropriate powers of a scalar function $\phi$. This
function is the solution of the Lichnerowicz equation (\ref{eq:ee5}). In the
case of the fluid the quantities
which can be rescaled in 
a way which is consistent with the general scheme are $z$ and $j^a$,  
and not the quantities $w$ and $\tilde u^a$. Therefore, in order to provide
initial data
for the fluid variables  $(w,\tilde{u}^a)$, equations (\ref{eq:ee7}) must be 
inverted.

Taking into account the variable change (\ref{eq:ee6}) and the equation of 
state (\ref{eq:ee8}), then (\ref{eq:ee7})  is equivalent to the inversion of 
the map (see \cite[\S4]{BK3} for details) 
\begin{equation} 
\label{eq:EE2} 
\begin{split} 
 \Phi\left(w,\tilde{u}^a\right)&:=\left(w\left\{ 
1+\left(1+\kappa 
w^2\right)\left(g_{ab}\tilde{u}^a\tilde{u}^b\right)\right\}^{\frac{\gamma-1}{2}}
,
\dfrac{\left(1+\kappa 
w^2\right)\tilde{u}^a\sqrt{1+g_{ab}\tilde{u}^a\tilde{u}^b}}{1+\left(1+\kappa 
w^2\right)\left(g_{ab}\tilde{u}^a\tilde{u}^b\right)}\right)\\ &= 
(z^{\frac{\gamma-1}{2}},\frac{j^a}{z}). 
\end{split} 
\end{equation}  
 
The inversion of this map under certain condition was established in
\cite{BK3}. 

\begin{thm}[Reconstruction theorem for the initial data] 
 \label{thm4} 
Let $g$ be a Riemannian metric, then there is a continuous function $S:[0,1)\to 
\setR$ such that if  
\begin{equation} 
\label{eq:ee3} 
 0\leq z^{\frac{\gamma-1}{2}}\leq S\left(z^{-1} \sqrt{ g_{ab} j^a j^b}\right), 
\end{equation}   
then system (\ref{eq:EE2}) has a unique inverse. 
 
\end{thm}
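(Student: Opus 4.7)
The plan is to reduce the inversion of $\Phi$ to a scalar system by using the parallelism $\tilde u^a \parallel j^a$. Setting $v := |\tilde u|_g$, $y := z^{-1}\sqrt{g_{ab}j^a j^b}$, $X := z^{(\gamma-1)/2}$ and $\mu := 1+\kappa w^2$, a direct computation shows that $\Phi(w,\tilde u^a) = (X,Y^a)$ with $Y^a$ proportional to $\tilde u^a$ and
\begin{equation*}
X = w\bigl(1+\mu v^2\bigr)^{(\gamma-1)/2},\qquad
y = \frac{\mu v\sqrt{1+v^2}}{1+\mu v^2}.
\end{equation*}
The right-hand side of the second equation is bounded above by $1$, which explains the domain $[0,1)$ of $S$; given $(X,Y^a)$ the direction of $\tilde u^a$ is forced, so reconstruction reduces to solving this $2\times 2$ scalar system for $(w,v)$.

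The key analytic step is to invert the $y$-equation in $v$ for fixed $w$. A direct computation shows that $\partial_v\bigl[\mu v\sqrt{1+v^2}/(1+\mu v^2)\bigr]$ has the sign of $1+(2-\mu)v^2$, so this map is strictly increasing from $0$ to $1$ precisely when $\mu\leq 2$, i.e.\ $\kappa w^2\leq 1$; beyond this regime it attains a maximum at $v=1/\sqrt{\mu-2}$ and admits two preimages of each subcritical value. Restricting to the monotone regime produces a smooth inverse $v=v(w,y)$, and substituting into the first equation yields
\begin{equation*}
F(w,y) := w\bigl(1+\mu(w)\,v(w,y)^2\bigr)^{(\gamma-1)/2}= X.
\end{equation*}
At $w=0$ one has $v(0,y)=y/\sqrt{1-y^2}$, so $F(0,y)=0$ and $\partial_w F(0,y)=(1-y^2)^{-(\gamma-1)/2}>0$. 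The implicit function theorem, applied uniformly on compact subsets of $[0,1)$, then yields a unique branch $w=w(X,y)$ on an interval $X\in[0,S(y)]$ for a suitable $S(y)>0$.

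I would define $S(y)$ as the largest $X\geq 0$ for which strict monotonicity of $F(\cdot,y)$ and the regime bound $\kappa w(X,y)^2\leq 1$ are both preserved along the constructed branch; continuity of $S$ in $y$ then follows from the smooth dependence of $F$ and $v$ on their arguments. The essential obstacle is precisely the breakdown of monotonicity of the $y$-equation once $\kappa w^2>1$: past this threshold a single pair $(X,Y^a)$ can admit two admissible preimages, and the role of the bound $S(y)$ is exactly to exclude this regime so that uniqueness is preserved. Once $(w,v)$ are recovered the velocity vector is assembled as $\tilde u^a=(v/y)Y^a$ for $y>0$ and $\tilde u^a=0$ for $y=0$, completing the inversion.
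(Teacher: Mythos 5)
The paper itself does not prove Theorem \ref{thm4}; it quotes it from \cite{BK3} ("the inversion of this map under certain condition was established in \cite{BK3}"), so I assess your argument on its own merits. Your reduction to the scalar $2\times2$ system in $(w,v)$, the sign computation $\partial_v f\sim 1+(2-\mu)v^2$ for $f(v)=\mu v\sqrt{1+v^2}/(1+\mu v^2)$, and the data $F(0,y)=0$, $\partial_wF(0,y)=(1-y^2)^{-(\gamma-1)/2}$ are all correct and this is the natural route. However, two of your structural claims are wrong. The right-hand side of the $y$-equation is \emph{not} bounded by $1$ in general: for $\mu>2$ its maximum is $f\bigl(1/\sqrt{\mu-2}\bigr)=\mu/\bigl(2\sqrt{\mu-1}\bigr)>1$ and $f(v)\to1^{+}$ as $v\to\infty$. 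Consequently your diagnosis of the obstruction is off: for $y<1$ (the only values in the domain of $S$) the decreasing branch of $f$ takes values only in $(1,\mu/(2\sqrt{\mu-1}))$, so $f(v)=y$ has exactly \emph{one} solution $v$ for every $\mu\geq1$, even past the threshold $\kappa w^2=1$. The $v$-equation never produces two admissible preimages for $y<1$; the only possible source of non-uniqueness is failure of monotonicity of the composed map $w\mapsto F(w,y)$.

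This leads to the substantive gap: you define $S(y)$ by monotonicity of $F(\cdot,y)$ ``along the constructed branch'', i.e.\ for $w\in[0,w(X,y)]$, and this does not exclude a second root $w'>w(X,y)$ of $F(w',y)=X$. The tool that confines competing preimages is the elementary bound $F(w,y)\geq w$ (valid because $\gamma>1$ and $1+\mu v^2\geq1$), which gives $w'\leq X$; but $w(X,y)$ can be much smaller than $X$ when $y$ is close to $1$, since $F(w,y)\approx(1-y^2)^{-(\gamma-1)/2}\,w$ near $w=0$. You therefore need strict monotonicity of $F(\cdot,y)$ on the whole $w$-interval $[0,S(y)]$, not merely along the branch. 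With that definition the argument closes: for $X\leq S(y)$ every preimage lies in $[0,X]\subset[0,S(y)]$, where $F(\cdot,y)$ is injective, and existence follows from $F(0,y)=0$, $F(X,y)\geq X$ and the intermediate value theorem (note also that $v(w,y)$ is smooth there because the solution of $f(v)=y<1$ stays on the part of the increasing branch where $1+(2-\mu)v^2>0$). Finally, ``the largest such $X$'' is only lower semicontinuous a priori; to get the continuous $S$ of the statement you should either prove continuity or replace it by a continuous positive minorant, which is harmless for the theorem.
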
 
 
 Since Condition (\ref{eq:ee3}) is not invariant under
  scaling, the unscaled initial data for the energy and momentum densities must
  satisfy it.

Therefore there are two types of free data, the geometric data $(\bar
g,\bar A^{ab})$ where $\bar g $ is a Riemannian metric, $\bar A^{ab}$
is divergence and trace free form, and the matter data
$(\hat{z}^{\frac{\gamma-1}{2}},\hat j^a)$,  which are
  constructed using (\ref{eq:ee7}) but with the flat metric $\hat g$.

We also assume that $(\M,\bar g)$ belongs in the positive Yamabe class
(see Definition \ref{def:Yamabe}) and has no Killing vector fields in
$W_{s,\delta}^p(\M)$ (for $p=2$ and $s>\frac{3}{2}$ this assumption
was verified in \cite{maxwell06:_rough_einst}).
 
\begin{thm}[Solution of the constraint equations] 
  \label{thm3} 
Let $\M$ be  a Riemannian manifold and   $(\bar g,\bar 
  A^{ab},\hat{z}^{\frac{\gamma-1}{2}},\hat j^a)$ be free data  such that
$(\M,\bar g)$ is asymptotically flat of the 
class $W_{s,\delta}^p$ and belongs to the positive Yamabe class, 
$\bar{A}^{ab}\in W_{s-1,\delta+1}^p(\M)$, $(\hat{z}^{\frac{\gamma-1}{2}},\hat
j^a)\in W_{s,\delta+2}^p(\M)$, 
\begin{math}s\in (\frac{n}{p}, 
  \frac{2}{\gamma-1}+\frac{1}{p})\cap [1,\infty)\end{math} and 
$\delta\in(-\frac{n}{p},n-2-\frac{n}{p})$. 
\begin{enumerate} 
\item Assume $(\hat{z},\hat{j}^a)$ satisfy (\ref{eq:ee3}) with respect to a
flat metric $\hat{g}$, then
$(w,\tilde{u}^a)=\Phi^{-1}(z^{\frac{\gamma-1}{2}},\frac{j^a}{z})$
are initial data for the fluid and satisfy 
the compatibility condition (\ref{eq:ee7})
in the term of the metric $g=\phi^4\hat{g}$,
where  $z=\phi^{-8}\hat{z}$ and $j^a=\phi^{-10}\hat{j}^a$ and 
$\phi $ is the solution to the Lichnerowicz equation (\ref{eq:ee5}).
Moreover,
\begin{math} 
  (w,\tilde{u}^0-1, \tilde{u}^a)\in W_{s,\delta+2}^p(\M) 
\end{math}.

  \item There exists a conformal metric $g$, $(2,0)$--symmetric 
  form $K_{ab}$ which satisfy the constraint equation (\ref{eq:EE1}) with the
right hand side $(z,j^a)$. The pair $(M,g)$ is 
  asymptotically flat of the class $W_{s,\delta}^p$ and
  \begin{math} K_{ab}\in  W_{s-1,\delta+1}^p(\M)\end{math}.

\end{enumerate} 
 
\end{thm}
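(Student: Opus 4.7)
The plan is to reduce the coupled constraint system \eqref{eq:EE1} to a scalar semilinear Lichnerowicz equation via the conformal method in the maximal-slice case, solve it using Theorem \ref{thm2}, and then recover the fluid initial data of Part 1 by inverting the map $\Phi$ via the reconstruction Theorem \ref{thm4}. First I would invoke the Brill--Cantor theorem (Theorem \ref{thm:brill-cantor}): since $(\M,\bar g)$ belongs to the positive Yamabe class, it is conformally equivalent to a metric in $W_{s,\delta}^p$ of vanishing scalar curvature, so we may assume $R(\bar g) = 0$. Introducing the ansatz $g = \phi^{4/(n-2)}\bar g$, $K^{ab} = \phi^{-\frac{2(n+2)}{n-2}}\bar A^{ab}$, $z = \phi^{-\frac{2n+2}{n-2}}\hat z$, $j^a = \phi^{-\frac{2n+2}{n-2}}\hat j^a$, and using that $\bar A^{ab}$ is transverse--traceless, the momentum constraint is automatically satisfied, and the Hamiltonian constraint, after substituting $\phi = 1+u$, collapses to
\begin{equation*}
-\Delta_{\bar g} u = c_1|\bar A|^2_{\bar g}(1+u)^{-N_1} + c_2\hat z(1+u)^{-N_2},
\end{equation*}
which fits the framework of Theorem \ref{thm2} with $a_0 \equiv 0$ and $h_i(t) = c_i(1+t)^{-N_i}$ positive, $C^1$, and non-increasing on $(-1,\infty)$.

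To apply Theorem \ref{thm2} I must verify that $m_1 := |\bar A|^2_{\bar g}$ and $m_2 := \hat z$ lie in $W_{s-2,\delta+2}^p(\M)$. The first follows from Proposition \ref{prop:4} (extended to the manifold via the Appendix) applied to the product $\bar A \otimes \bar A$ with $\bar A \in W_{s-1,\delta+1}^p(\M)$, combined with the fact that $\bar g_{ac}\bar g_{bd}$ differs from a constant tensor by an element of $W_{s,\delta}^p(\M)$; the arithmetic conditions reduce to $s > n/p$ and $\delta \geq -n/p$, both in the hypotheses. The second is the delicate one: I am given only $\hat z^{(\gamma-1)/2} \in W_{s,\delta+2}^p(\M)$, so I would raise it to the $\beta = 2/(\gamma-1)$ power via the fractional-power Proposition \ref{prop:5a}, which requires $s < \beta + 1/p = 2/(\gamma-1) + 1/p$, precisely the upper bound on $s$ in the hypothesis. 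Theorem \ref{thm2} then yields a unique non-negative $u \in W_{s,\delta}^p(\M)$, so $\phi = 1+u \geq 1 > 0$, and Propositions \ref{prop:4} and \ref{prop:Moser} give $K_{ab} \in W_{s-1,\delta+1}^p(\M)$, proving Part 2. For Part 1, I then apply Theorem \ref{thm4} to $(z^{(\gamma-1)/2}, j^a/z)$: the ratio $z^{-1}\sqrt{g_{ab}j^a j^b}$ is preserved (up to the appropriate power of $\phi$) between $(\hat z,\hat j^a,\hat g)$ and $(z,j^a,g)$, so \eqref{eq:ee3} transfers, and the inversion produces $(w,\tilde u^a)$; the claimed regularity $(w,\tilde u^0-1,\tilde u^a) \in W_{s,\delta+2}^p(\M)$ follows from Proposition \ref{prop:Moser} applied to the $C^1$ map $\Phi^{-1}$, which vanishes at the origin.

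The principal obstacle is the interplay between the low regularity of the matter data and the Lichnerowicz nonlinearity: bridging the gap between the given $\hat z^{(\gamma-1)/2}$ and the required $\hat z \in W_{s-2,\delta+2}^p$ depends crucially on Proposition \ref{prop:5a}, and this is what dictates the sharp upper bound $s < 2/(\gamma-1) + 1/p$. Equally important is the improved triple-product Proposition \ref{prop:11}, which (as in the proof of Theorem \ref{thm:brill-cantor}) supplies enough flexibility in the weight exponents for the products involving two copies of $\bar A$ together with metric factors; without this refinement the argument would be confined to $p > n/2$ as in earlier work.
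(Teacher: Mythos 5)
There is a genuine gap at the momentum constraint. You assert that because $\bar A^{ab}$ is transverse--traceless, setting $K^{ab}$ equal to a conformal rescaling of $\bar A^{ab}$ makes the momentum constraint ``automatically satisfied''. That is only true in the source--free case: the momentum constraint in (\ref{eq:EE1}) has the non--zero right hand side $-8\pi j^a$ coming from the fluid momentum density, and the divergence of a rescaled TT tensor is zero, not $-8\pi j^a$. This is precisely why the paper's proof contains a second stage that you omit entirely: one writes $\hat K^{ab}=\hat A^{ab}+\bigl(\hat{\mathcal L}(W)\bigr)^{ab}$ and solves the vector equation (\ref{eq:ee4}), $\Delta_{L_{\hat g}}W=-8\pi\hat j$, for the Lichnerowicz Laplacian. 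Solvability uses that this operator belongs to ${\bf Asy}(\Delta,s,\delta,p)$, is semi--Fredholm by Corollary \ref{cor:1}, and is injective because of the standing hypothesis that $(\M,\bar g)$ has no Killing fields in $W_{s,\delta}^p(\M)$ --- a hypothesis your argument never invokes, which is a symptom of the missing step. The omission also propagates into your Lichnerowicz equation: the quadratic coefficient there must be $\hat K_a{}^b\hat K_b{}^a$ (including the $\hat{\mathcal L}(W)$ part, whose membership in $W_{s-1,\delta+1}^p(\M)$ comes from the solution of (\ref{eq:ee4})), not merely $|\bar A|^2_{\bar g}$.

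Two further points. First, your conformal weights for the matter are off in the momentum sector: for $n=3$ the paper takes $j^a=\phi^{-10}\hat j^a$ (matching the $\phi^{-10}$ scaling of $\hat K^{ab}$ needed for conformal covariance of the divergence), whereas your exponent $\tfrac{2n+2}{n-2}$ gives $\phi^{-8}$; with the correct weights the quantity $z^{-2}g_{ab}j^aj^b$ is preserved \emph{exactly}, and then the transfer of condition (\ref{eq:ee3}) follows from $\phi\geq 1$ and the monotonicity $z^{\frac{\gamma-1}{2}}\leq\hat z^{\frac{\gamma-1}{2}}$, which is the argument the paper gives --- your ``up to the appropriate power of $\phi$'' version would not suffice, since (\ref{eq:ee3}) is not scale invariant. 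Second, the remaining ingredients you list (the Brill--Cantor reduction to a scalar--flat conformal background via Theorem \ref{thm:brill-cantor}, Theorem \ref{thm2} for the scalar Lichnerowicz equation with $u=\phi-1$, Proposition \ref{prop:5a} with $\beta=\tfrac{2}{\gamma-1}$ explaining the upper bound on $s$, and Propositions \ref{prop:4} and \ref{prop:Moser} for the regularity of $(w,\tilde u^a)$ via Theorem \ref{thm4}) do match the paper's route, so repairing the proof amounts to inserting the solution of (\ref{eq:ee4}) and carrying $\hat K$ rather than $\bar A$ through the Hamiltonian constraint.
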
 
 
\begin{rem} 
 The upper bound $\frac{2}{\gamma-1}+\frac{1}{p}$ for the regularity index $s$ 
is caused by the equation of state (\ref{eq:ee6}), and it is
not needed  whenever $\frac{2}{\gamma-1}$ is an integer.   
 
\end{rem}

\textit{Proof Theorem \ref{thm3}.} 
  We first  replace the metric $\bar g$ by a 
  conformal flat metric $\hat g$. The metric $\hat g$ is given by the 
  conformal transformation $\hat{g}=\varphi^4\bar g$, where 
  $\varphi-1\in W_{s,\delta}^p(\M)$.  The existence and the uniqueness 
  of such a $\varphi$ is assured by Theorem 
  \ref{thm:brill-cantor}. 
 
In the second stage we set $\hat{A}^{ab}=\varphi^{-10}\bar{A}^{ab}$ and  
\begin{equation*} 
 \hat{K}^{ab}=\hat{A}^{ab}+\left(\hat{\mathcal{L}}\left(W\right)\right)^{ab}, 
\end{equation*} 
where  
$\hat{\mathcal{L}}$ is is the Killing fields operator with respect to the
metric 
$\hat{g}$, that is,  
\begin{equation*} 
 \left({ 
      \hat{\mathcal{L}}}(W)\right)_{ab}= 
  \hat{\nabla}_a W_b+ \hat{\nabla}_b 
W_a-\frac{1}{3}g_{ab}\left(\hat{\nabla}_iW^i\right). 
\end{equation*} 
Then $\hat{K}$ satisfies the momentum constraint (\ref{eq:EE1}), if the  
 vector $W$ is a solution to the Lichnerowicz Laplacian  
\begin{equation} 
\label{eq:ee4} 
\left( 
\Delta_{L_{\hat{g}}}W\right)^b=\hat{\nabla}_a\left(\hat{\mathcal{L}} 
\left(W\right)\right)^{ab}=\Delta_{\hat{g}}W^b+\frac{1}{3}\hat{\nabla} 
^b\left(\hat{\nabla}_aW^a\right)+\hat{R}_a^b W^a=-8\pi \hat{j}^b. 
\end{equation}  
Here $\hat{R}_a^b$ is the Ricci curvature tensor with respect to the metric 
$\hat{g}$.  The Lichnerowicz Laplacian (\ref{eq:ee4}) is a strongly elliptic
operator (see 
e.g. \cite{y.00:_einst_euclid}) and belongs to ${\bf Asy}(\Delta,s,\delta,p)$, 
since $(\M,\hat{g})$ is asymptotically flat of the class $W_{s,\delta}^p$.
Its 
kernel 
consists of  Killing vector fields in $W_{s,\delta}^p(\M)$, since we assume 
there are no such fields, then  Corollary \ref{cor:1} implies that 
$\Delta_{L_{\hat{g}}}$ 
is an isomorphism, and consequently equation (\ref{eq:ee4}) possesses a 
solution. 
 
The solution to the Hamiltonian constraint is
constructed by an additional conformal
transformation $g=\phi^4\hat{g}$.
Setting $K^{ab}=\phi^{-10}\hat{K}^{ab}$ and $j^b=\phi^{-10}\hat{j}^b$
preserves the momentum constraint of (\ref{eq:EE1}) with respect to
the metric $g$.
Under this transformation, the scalar curvature $R(g)$ satisfies the
equation
\begin{equation*} 
\phi^{5} R(g)=R(\hat g)-8\Delta_{\hat g}\phi,
\end{equation*}

(see e.g. \cite[Ch. 5]{Aubin_1998}),   and since $R(\hat g)=0$, the Hamiltonian 
constraint in (\ref{eq:ee7}) is satisfied provided that $\phi$ is a solution to
the Lichnerowicz 
equation  
\begin{equation} 
\label{eq:ee5} 
 -\Delta_{\hat g}\phi =2\pi \hat z \phi^{-3}+\frac{1}{8}\hat{K}_a{}^b\hat{K}_b 
\phi^{-7}. 
\end{equation}  
Setting $u=\phi-1$, then the Lichnerowicz equation
(\ref{eq:ee5}) takes the form
\begin{equation*}
 -\Delta_{\hat g}u =2\pi \hat z (u+1)^{-3}+\frac{1}{8}\hat{K}_a{}^b\hat{K}_b{}^a 
(u+1)^{-7}, 
\end{equation*} 
which is then in a form suitable for the application  of Theorem \ref{thm2}.
This theorem provides a
non--negative solution $u\in W_{s,\delta}^p(\M)$. Hence $\phi\geq1$.

It remains to construct the initial data for the fluid variables 
$(w,\tilde{u}^\alpha)$ in terms of the  metric $g=\phi^4\hat{g}$.  
Setting  $z=\phi^{-8}\hat{z}$, preserves    the quantity
$\hat{z}^{-2}\hat{g}_{ab}\hat{j}^a\hat{j}^b$, while 
$z^{\frac{\gamma-1}{2}}=\phi^{-4(\gamma-1)}\hat{z}^{\frac{\gamma-1}{2}}$.   
Since  the adiabatic constant $\gamma>1$ and $\phi\geq1$, $\phi^{-4(\gamma-1)}\leq
1$ and consequently ${z}^{\frac{\gamma-1}{2}}\leq
\hat{z}^{\frac{\gamma-1}{2}}$. Therefore, if
$(\hat{z}^{\frac{\gamma-1}{2}},\frac{\hat j^a}{\hat{z}})$ satisfies
(\ref{eq:ee3}), then the 
pair $({z}^{\frac{\gamma-1}{2}}, \frac{\hat j^a}{z})$ does it too. 
 
 Hence, by Theorem \ref{thm4} we can let 
\begin{math} 
 (w,\tilde{u}^a)=\Phi^{-1}(z^{\frac{\gamma-1}{2}}, \frac{\hat j^a}{z})
\end{math}, and then obviously the compatibility conditions
(\ref{eq:ee7}) are satisfied in terms of the
metric $g$.
Notice that $z^{\frac{\gamma-1}{2}}\in W_{s,\delta+2}^p(\M)$, 
so now we can apply the estimate of the fractional power,
Proposition \ref{prop:5a}, with   $\beta=\frac{2}{\gamma-1}$ and obtain 
that $z\in W_{s,\delta+2}^p(\M)$.
At this stage appears the upper bound of the regularity index $s$.
From Propositions \ref{prop:4} and \ref{prop:Moser} we get that
$(w,\tilde{u}^a)=\Phi^{-1}(z^{\frac{\gamma-1}{2}}, \frac{\hat
  j^a}{z})$ are also in $W_{s,\delta+2}^p(\M)$.
Finally, since the velocity vector is a time--like unit vector, we set
$\tilde{u}^0=1+g_{ab}\tilde{u}^a\tilde{u}^b$.
\hfill{$\square$}

\section{Appendix}
\label{Appendix}

The tools and the elliptic theory  which we developed in $\setR^n$ hold also  
on an asymptotically flat manifolds of the class $W_{s,\delta}^p$.  Here we 
will discuss the extension of several properties to Riemannian manifolds. The 
unproven properties  are to be demonstrated by similar techniques and 
methods.  

We recall some of the notations of \S\ref{sec:Asymptotically flat manifold}:
From the Definition \ref{def:asymp}  there is a compact set 
$\mathcal{K}\subset \M$, and  a collection of charts $\{(U_i,\phi_i)\}_{i=1}^N$
such that $\M\setminus \mathcal{K}\subset \cup_{i=1}^N U_i$, where $\phi_i: 
E_{r_i}\to U_i$ is a homeomorphism and $E_{r_i}=\{x\in\setR^n: |x|>r_{r_i}\}$. 
Let  $\{(V_j,\varTheta_j)\}_{j=1}^{N_0}$ be a collection of charts 
that cover $\mathcal{K}$, where each 
$\varTheta_j$ is a homeomorphism between a ball  $B_j$  in $\setR^n$ and 
$V_j\subset \mathcal{K}$. Let  $\{\chi_i,\alpha_j\}$
be a partition of unity and  subordinate to $\{U_i,V_j\}$, then 
\begin{equation*}
\label{eq:norm-MA}
 \left\|u \right\|_{W_{s,\delta}^p(\M)}:= 
\sum_{i=1}^N\left\|\phi_i^\ast(\chi_i u)
\right\|_{W_{s,\delta}^p(\setR^n)}+\sum_{j=1}^{N_0}
\left\|\varTheta_j^\ast(\alpha_j u)
\right\|_{W_{s}^p(\setR^n)} \tag{\ref{eq:norm-M}}
\end{equation*} 
is a norm in $W_{s,\delta}^p(\M)$, and 
\begin{equation*}
\label{eq:bilinear:1A}
 \langle u, \varphi\rangle_{\M}=\sum_{i=1}^N\langle\phi_i^\ast(\chi_i u), 
\phi_i^\ast(\chi_i
\varphi)\rangle_W+\sum_{j=1}^{N_0}\langle\varTheta_j^\ast(\alpha_j
u),\varTheta_j^\ast(\alpha_j
\varphi)\rangle \tag{\ref{eq:bilinear:1}}
\end{equation*} 
is a bilinear form in $W_{s,\delta}^p(\M)$,  where $\langle \cdot,\cdot\rangle$
and $\langle \cdot,\cdot\rangle_W$ are the bilinear forms (\ref{eq:3}) and
(\ref{eq:4}) respectively.

We shall need the following elementary two Propositions.
\begin{prop}
\label{prop:appen:1}
Let $F:O_1\to O_2$ be a $C^\infty$--diffeomorphism between two open
set of $\setR^n$ such that $\det(DF)\geq \epsilon_0>0$. Then
\begin{equation}
\label{eq:appen:3}
\|F^\ast(u)\|_{W_{s,\delta}^p(O_1)}=\|u\circ F\|_{W_{s,\delta}^p(O_1)}\leq C
\|u\|_{W_{s,\delta}^p(O_2)},
\end{equation}
The constant $C$ depends on $s$, $\delta$ and $\epsilon_0$.
\end{prop}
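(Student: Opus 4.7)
The plan is to reduce to the classical unweighted diffeomorphism invariance of Besov spaces and combine it with the dyadic representation of the weighted norm.

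First, I would recall the following unweighted statement: if $G:U\to V$ is a $C^\infty$-diffeomorphism between open subsets of $\setR^n$ with uniformly bounded derivatives of $G$ and $G^{-1}$ and with $\det DG\geq\epsilon_0>0$, then $u\mapsto u\circ G$ is bounded $W_s^p(V)\to W_s^p(U)$ for every $s\in\setR$ and $p\in(1,\infty)$, the constant depending only on $s,p,\epsilon_0$, and finitely many derivative bounds of $G$. I would prove this for non-negative integer $s$ by the chain rule, for $s\in(0,1)$ by a direct change of variables in the Slobodeckij integral (where $\det DG\geq \epsilon_0$ is used), for general $s>0$ by interpolation via Theorem \ref{thm:Triebel}(d), and for $s\leq 0$ by duality using the pairing of Theorem \ref{thm:Triebel}(c). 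A detailed account is in Triebel's monograph.

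Next I would expand the weighted norm via (\ref{eq:norm1}),
\begin{equation*}
\|u\circ F\|_{W_{s,\delta}^p(O_1)}^p
= \sum_{j=0}^\infty 2^{(\delta+n/p)pj}\bigl\|(\psi_j(u\circ F))_{(2^j)}\bigr\|_{W_s^p}^p,
\end{equation*}
and handle each term separately. For each $j$ I would pick an enlarged cut-off $\tilde\psi_{k(j)}$ on $O_2$ supported in a bounded union of dyadic shells covering $F(\supp\psi_j)$, so that $\psi_j(u\circ F)=\psi_j\cdot((\tilde\psi_{k(j)}u)\circ F)$. After the rescaling $z=2^{-j}y$, I would apply the unweighted invariance to the rescaled diffeomorphism $F_j(z):=2^{-j}F(2^j z)$, whose derivative bounds should be uniform in $j$. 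Summing over $j$ and using that each $\tilde\psi_{k(j)}$ appears in only finitely many terms returns $\|u\|_{W_{s,\delta}^p(O_2)}^p$.

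The main obstacle will be obtaining uniform bounds on the rescaled diffeomorphisms $F_j$ across scales, since the constant in the unweighted invariance depends on derivative bounds. When $O_1,O_2$ are bounded---the case of the interior charts $(V_j,\varTheta_j)$ used in the norm \eqref{eq:norm-M}---the weight $(1+|x|)^\delta$ is comparable to a constant and the weighted Besov norm is equivalent to the unweighted $W_s^p$ norm, so the estimate follows directly from the first step. In the unbounded setting relevant to the exterior charts, uniformity of the derivatives of $F_j$ across $j$ is inherited from the asymptotic structure of the transition maps between charts at infinity in Definition \ref{def:asymp}.
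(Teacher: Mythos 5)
Your route is genuinely different from the paper's. You reduce to the unweighted diffeomorphism invariance of $B^s_{p,p}$ and then argue term by term in the dyadic norm (\ref{eq:norm1}), rescaling each annulus and applying the unweighted result to $F_j(z)=2^{-j}F(2^jz)$ --- the same template the paper uses for Propositions \ref{prop:mult-smooth}, \ref{prop:eqiv} and \ref{prop:Moser}. The paper never localizes: for positive integer $s=k$ it invokes the equivalence of Theorem \ref{thm:2} with the classical weighted norm (\ref{eq:norm3}) and performs one global chain-rule/change-of-variables estimate (this is where $\det DF\geq\epsilon_0$ enters); for negative integer $s$ it computes the norm by duality, using (\ref{eq:dual}) together with Propositions \ref{prop:dual} and \ref{prop:dual2} and the identity $(u\circ F,\varphi)_{L^2(O_1)}=(u,((\det DF)^{-1}\varphi)\circ F^{-1})_{L^2(O_2)}$, applying the integer case to $F^{-1}$, $p'$, $-\delta$; all remaining $s$ then follow from weighted interpolation, Theorem \ref{thm1}(d). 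Your approach covers all $s>0$ at once without interpolating between integer orders, but it must pay for two pieces of bookkeeping the paper's organization avoids: (i) $F$ must carry each dyadic shell into a bounded union of shells, so that $2^{(\delta+n/p)pj}$ and $2^{(\delta+n/p)pk(j)}$ are comparable and the overlap in the final summation is finite, and (ii) the rescaled maps $F_j$ must have derivative bounds uniform in $j$, which amounts to $|\partial^\alpha F(x)|\lesssim(1+|x|)^{1-|\alpha|}$ for $|\alpha|\geq1$ together with $1+|F(x)|\simeq 1+|x|$.

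The genuine soft spot is where you discharge (i) and (ii): Definition \ref{def:asymp} does not supply them. Its conditions constrain the pullbacks $\phi_i^\ast g$ of the metric, not the transition maps between charts, so ``inherited from the asymptotic structure'' appeals to hypotheses that appear neither in the statement of the proposition (which assumes only $\det DF\geq\epsilon_0$) nor elsewhere in the paper; if you take this route you must state these bounds on $F$ explicitly as hypotheses. To be fair, the paper's own integer-order step (``standard calculations'') tacitly needs the same quantitative structure, since after the change of variables the weight attached to $\partial^\beta u$ drops from $\delta+|\alpha|$ to $\delta+|\beta|$ and the excess must be absorbed by the derivatives of $F$; without such assumptions the asserted dependence of $C$ on $s,\delta,\epsilon_0$ alone fails (consider a large translation or dilation). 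So your plan is workable at essentially the paper's level of rigor, but the paper's arrangement confines the scale information to a single global chain-rule estimate and gets negative and fractional $s$ for free from duality and interpolation, whereas your argument needs the uniformity across the whole rescaled family $\{F_j\}$ and the shell-matching estimate made precise before the sum over $j$ closes.
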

\begin{proof}
 Let $s=k$ be a positive integer and consider the norm (\ref{eq:norm3}) 
restricted to the open sets $O_1$ and $O_2$. Then standard calculations give
that
\begin{equation*}
 \sum_{|\alpha|\leq 
k}\int_{O_1}\left(1+|x|\right)^{(\delta+|\alpha|)p}|\partial^\alpha (u\circ 
F)|^pdx \leq\left(\frac{C}{\epsilon_0}\right)^p
\sum_{|\alpha|\leq 
k}\int_{O_2}\left(1+|x|\right)^{(\delta+|\alpha|)p}|\partial^\alpha u|^pdx. 
\end{equation*}
Hence, by the equivalence of the norms (\ref{eq:intro:5}) and (\ref{eq:norm1})
(see Theorem \ref{thm:2}) we have that $\|F^\ast(u)\|_{W_{k,\delta}^p(O_1)}\leq 
\frac{C}{\epsilon_0}\|u\|_{W_{k,\delta}^p(O_2)}$.

For a negative integer $k$ we compute the norm in the dual form (\ref{eq:dual}).
Note that \\
\begin{math}
 (u\circ F,\varphi)_{L^2(O_1)}=(u,((\det(DF)^{-1}\varphi )\circ 
F^{-1})_{L^2(O_2)}
\end{math}, hence
by  Propositions \ref{prop:dual} and
\ref{prop:dual2}, and approximation, we have that
\begin{equation*}
\label{}
\begin{split}
 \|u\circ F\|_{W_{k,\delta}^{p}(O_1)} & =\sup\{|\langle u\circ
F,\varphi\rangle_W|: 
\|\varphi\|_{W_{-k,-\delta}^{p'}(O_1)}\leq 1, \varphi\in C_0^\infty(O_1) \}\\ 
 & =
\sup\{|\langle u,(\det(DF)^{-1}\varphi)\circ 
F^{-1}\rangle_W|: 
\|\varphi\|_{W_{-k,-\delta}^{p'}(O_1)}\leq 1, \varphi\in C_0^\infty(O_1) \}.
\end{split}
\end{equation*}
Replacing $F$ by $F^{-1}$, $p$ by $p'$ and $\delta$ by $-\delta$ in the
previous estimate, we obtain that
\begin{equation}
\begin{split}
|\langle u,\varphi (\det(DF)^{-1})\circ 
F^{-1})\rangle_W| & \leq C 
\|u\|_{W_{k,\delta}^p(O_2)}
\| (\det(DF)^{-1}\varphi)\circ 
F^{-1})\|_{W_{-k,-\delta}^{p'}(O_2)} \\  &\leq C
\|u\|_{W_{k,\delta}^p(O_2)}
\|\varphi\|_{W_{-k,-\delta}^{p'}(O_1)}.
\end{split}
\end{equation} 
Thus the linear operator (\ref{eq:appen:3}) is bounded whenever $s$ is an
integer. 
We now complete the proof by interpolation, Theorem \ref{thm1} (d). 
\end{proof}

\begin{rem}
 Obviously the proposition  holds also in the unweighted spaces $W_s^p$.
\end{rem}

The following proposition can be proven by  Proposition \ref{prop:appen:1} 
and by standard techniques of finite covering of manifolds (see 
e.g.~\cite{Driver_2003}).
\begin{prop}
\label{prop:appen:3}
 Suppose $u\in W_{s,\delta}^p(\M)$ and   $\supp(u)\subset U_i$ or
$\supp(u)\subset 
V_j$, then 
\begin{equation}
 \|u\|_{W_{s,\delta}^p(\M)}\leq C 
\|\phi_i^\ast(u)\|_{W_{s,\delta}^p(E_{r_i})}\quad \text{or} \qquad
\|u\|_{W_{s,\delta}^p(\M)}\leq C \|\varTheta_j^\ast(u)\|_{W_{s}^p(B_j)}.
\end{equation} 
\end{prop}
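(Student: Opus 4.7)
The plan is to control each summand in the partition--of--unity decomposition of $\|u\|_{W_{s,\delta}^p(\M)}$ by the single quantity $\|\phi_i^\ast(u)\|_{W_{s,\delta}^p(E_{r_i})}$, separating a ``diagonal'' term (handled by the smooth--multiplier estimate) from ``off--diagonal'' terms (handled by pullback through transition maps).

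Assume first $\supp(u)\subset U_i$ and write
\begin{equation*}
\|u\|_{W_{s,\delta}^p(\M)} = \sum_{k=1}^N \|\phi_k^\ast(\chi_k u)\|_{W_{s,\delta}^p(\setR^n)} + \sum_{j=1}^{N_0} \|\varTheta_j^\ast(\alpha_j u)\|_{W_s^p(\setR^n)}.
\end{equation*}
For the diagonal term $k=i$, I would write $\phi_i^\ast(\chi_i u)=(\chi_i\circ\phi_i)\,\phi_i^\ast(u)$. The factor $\chi_i\circ\phi_i$ is smooth with uniformly bounded derivatives on $E_{r_i}$ and extends by zero to $\setR^n$, so it satisfies \eqref{eq:smooth}; applying Proposition \ref{prop:mult-smooth} to any $W_{s,\delta}^p(\setR^n)$--extension $f$ of $\phi_i^\ast(u)$ realising the infimum in the definition of $\|\cdot\|_{W_{s,\delta}^p(E_{r_i})}$ gives $\|\phi_i^\ast(\chi_i u)\|_{W_{s,\delta}^p(\setR^n)}\leq C\|\phi_i^\ast(u)\|_{W_{s,\delta}^p(E_{r_i})}$.

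For the off--diagonal terms $k\neq i$, the function $\chi_k u$ is supported in the overlap $U_k\cap U_i\cap\supp(\chi_k)$; since $\phi_k^{-1}(U_i)\cap E_{r_k}$ is bounded (the asymptotic ends $U_k$ and $U_i$ separate outside a compact set, and the partition of unity has localized support), $\phi_k^\ast(\chi_k u)$ has compact support in $E_{r_k}$. The transition map $F_{ik}:=\phi_i^{-1}\circ\phi_k$ is a $C^\infty$--diffeomorphism on the relatively compact overlap with $\det(DF_{ik})$ bounded below, and
\begin{equation*}
\phi_k^\ast(\chi_k u)=(\chi_k\circ\phi_k)\,\bigl(\phi_i^\ast(u)\circ F_{ik}\bigr).
\end{equation*}
Proposition \ref{prop:appen:1} turns the pullback by $F_{ik}$ into a bounded operation on $W_{s,\delta}^p$ between the relevant bounded subsets of $\setR^n$, and Proposition \ref{prop:mult-smooth} absorbs the smooth cutoff; combining these estimates bounds this summand by $C\|\phi_i^\ast(u)\|_{W_{s,\delta}^p(E_{r_i})}$. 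Since the support is compact, the weighted and unweighted norms are equivalent there, which makes the bound uniform in the weight $\delta$. The $\alpha_j$--terms are treated identically with the diffeomorphism $\varTheta_j^{-1}\circ\phi_i$ on the bounded overlap $V_j\cap U_i$, using that $W_{s,\delta}^p$ and $W_s^p$ agree on compact supports. The second case, $\supp(u)\subset V_j$, is entirely symmetric: the diagonal term uses the smooth multiplier $\alpha_j\circ\varTheta_j$ on the ball $B_j$, and all other terms involve compact overlaps and transition diffeomorphisms.

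The main obstacle will be the bookkeeping on bounded overlap sets, in particular verifying that the transition maps have derivatives bounded above and below on the relevant supports so that Proposition \ref{prop:appen:1} applies with uniform constants, and justifying the extension step in the diagonal case so that the infimum--defined norm on $E_{r_i}$ is properly linked to the $\setR^n$--norm after multiplication by the cutoff. Once this is settled, the argument is a finite sum of estimates that are routine variants of the Euclidean tools already established.
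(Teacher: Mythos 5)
Your argument is exactly the route the paper intends: it proves the proposition by combining Proposition \ref{prop:appen:1} (pullback by transition diffeomorphisms) with the smooth--multiplier estimate of Proposition \ref{prop:mult-smooth} and the equivalence of weighted and unweighted norms on compactly supported pieces, i.e.\ the ``standard techniques of finite covering'' the paper invokes without writing them out. The bookkeeping points you flag (boundedness of the overlaps and uniform bounds on the transition maps there) are the only details to check, and they hold on the relatively compact overlap sets, so the proposal is correct.
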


We are now in a position to extend several properties to the 
$W_{s,\delta}^p(\M)$--spaces. 
\begin{prop}
 \label{porp:approx_manofold}
 The class $C_0^\infty(\M)$ is dense in $W_{s,\delta}^p(\M)$.
\end{prop}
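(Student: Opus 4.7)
The plan is to reduce the density statement on $\M$ to the corresponding density statement on $\setR^n$, Theorem \ref{thm1}(b), by working chart by chart against the norm (\ref{eq:norm-M}). Given $u\in W_{s,\delta}^p(\M)$ and $\varepsilon>0$, write
\begin{equation*}
u=\sum_{i=1}^{N}\chi_i u+\sum_{j=1}^{N_0}\alpha_j u,
\end{equation*}
using the partition of unity $\{\chi_i,\alpha_j\}$ subordinate to $\{U_i,V_j\}$. Since the sum is finite, it suffices to approximate each piece by an element of $C_0^\infty(\M)$ with accuracy $\varepsilon/(N+N_0)$; the sum will then lie in $C_0^\infty(\M)$ and be $\varepsilon$-close to $u$ in the norm (\ref{eq:norm-M}).

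For the compact pieces, $\varTheta_j^\ast(\alpha_j u)\in W_s^p(\setR^n)$ has support in $\overline{B_j}$. By the standard density of $C_0^\infty(\setR^n)$ in $W_s^p(\setR^n)$, pick $\tilde v_j\in C_0^\infty(\setR^n)$ close to $\varTheta_j^\ast(\alpha_j u)$; multiplying by a smooth cutoff $\eta_j\in C_0^\infty(B_j)$ equal to $1$ on a neighborhood of $\supp\varTheta_j^\ast(\alpha_j u)$ and invoking Proposition \ref{prop:mult-smooth} gives $v_j:=\eta_j\tilde v_j\in C_0^\infty(B_j)$ with $\|v_j-\varTheta_j^\ast(\alpha_j u)\|_{W_s^p(\setR^n)}$ arbitrarily small. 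Then $w_j:=(\varTheta_j^{-1})^\ast v_j$, extended by zero outside $V_j$, belongs to $C_0^\infty(\M)$.

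For the asymptotic pieces, we arrange (as may always be done in the construction of the partition of unity) that $\supp\chi_i\subset U_i$ stays a positive distance from $\partial U_i$, so that $\phi_i^\ast(\chi_i u)$, extended by zero, is an element of $W_{s,\delta}^p(\setR^n)$ whose support is contained in some set $\{|x|\geq r_i+\tau\}\subset E_{r_i}$. By Theorem \ref{thm1}(b) choose $\tilde v_i\in C_0^\infty(\setR^n)$ close to this extension; multiply by a smooth cutoff $\eta_i$ equal to $1$ on $\{|x|\geq r_i+\tau\}$ and vanishing on $\{|x|\leq r_i+\tau/2\}$, and note by Proposition \ref{prop:mult-smooth} that $v_i:=\eta_i\tilde v_i\in C_0^\infty(E_{r_i})$ remains close to $\phi_i^\ast(\chi_i u)$ in $W_{s,\delta}^p(\setR^n)$. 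Its pushforward $(\phi_i^{-1})^\ast v_i$, extended by zero, lies in $C_0^\infty(\M)$.

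The approximation on $\M$ is then $\sum_i (\phi_i^{-1})^\ast v_i+\sum_j w_j$, and closeness to $u$ follows directly from the definition (\ref{eq:norm-M}) of the norm applied term-wise. The only delicate point is the extension-by-zero argument in the asymptotic charts, which is why the partition of unity must be chosen so that each $\supp\chi_i$ keeps away from $\partial U_i$; granted this standard preparation, the cutoff-and-pull-back recipe goes through without further difficulty. (Alternatively, a preliminary truncation $\chi_R u\to u$, valid because $\delta<\infty$ forces the tails in the weighted norm to vanish, would reduce to the case of compactly supported $u$ and shorten the argument, but the chart-by-chart approach suffices.)
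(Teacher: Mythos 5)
Your proof follows essentially the same route as the paper's: decompose $u$ with the partition of unity $\{\chi_i,\alpha_j\}$, approximate each pulled-back piece via Theorem \ref{thm1}(b) (resp.\ the unweighted Besov density) in $\setR^n$, push the approximants back and sum; your cutoff step even makes explicit the support-in-chart issue that the paper's proof passes over silently. The only minor imprecision is the final gluing: closeness of the sum to $u$ is not literally ``term-wise from the definition (\ref{eq:norm-M})'' but uses Proposition \ref{prop:appen:3} to bound the $W_{s,\delta}^p(\M)$-norm of each single-chart-supported difference by its chart norm, which is exactly how the paper concludes.
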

\begin{proof}
 Let $u\in W_{s,\delta}^p(\M)$, $\epsilon$ be a positive arbitrary number.  
Since 
$\phi_i^\ast\left(\chi_iu\right)$ has  support in $E_{r_i}$,  
 there is, by Theorem \ref{thm1} (b), $h_i\in C_0^\infty(\setR^n)$ such 
that 
$\left\|\phi_i^\ast\left(\chi_iu\right)-h_i\right\|_{W_{s,\delta}^p(E_{r_i})} 
<\epsilon $. Similarly,
$\varTheta_j^\ast(\alpha_j u)\in W_{s}^p(B_j)$ has compact support in the ball 
$B_j$, so by 
the approximation in the Besov 
spaces, there is $h_j\in C_0^\infty(B_j)$ such that 
$\left\|\varTheta_j^\ast(\alpha_j u)-h_j\right\|_{W_{s}^p(B_j)} < \epsilon$. 
Setting 
$h=\sum_{i=1}^N h_i\circ \phi_i^{-1}+\sum_{j=1}^{N_0}  h_j\circ \varTheta_j^{-1} 
$,  then $h\in C_0^\infty(\M)$ and from Proposition
\ref{prop:appen:3} we obtain that
\begin{equation*}
\begin{split}
 \|u-h\|_{W_{s,\delta}^p(\M)} &=\left\|\sum_{i=1}^N
\left(\chi_i u- h_i\circ
\phi_i^{-1}\right)+\sum_{j=1}^{N_0}\left(\alpha_j u -
 h_j\circ \varTheta_j^{-1}\right)\right\|_{W_{s,\delta}^p(\M)}\\ &
\leq \sum_{i=1}^N \left\|\left(\chi_i u- h_i\circ
\phi_i^{-1}\right)\right\|_{W_{s,\delta}^p(\M)}+\sum_{j=1}^{N_0}
\left\|\left(\alpha_j u -
 h_j\circ \varTheta_j^{-1}\right)\right\|_{W_{s,\delta}^p(\M)}
\\ &
\leq C \sum_{i=1}^N
\left\|\phi_i^\ast\left(\chi_i u\right)- h_i\right\|_{W_{s,\delta}^p(E_{r_i})}
+C\sum_{j=1}^{N_0}\left\|\varTheta_j^\ast\left(\alpha_j u\right) -
 h_j\right\|_{W_{s,\delta}^p(B_j)}
\\ 
&
\leq \epsilon C(N+N_0).
\end{split}
\end{equation*}

\end{proof}
The next proposition characterizes the topological dual of 
 $W_{s,\delta}^{p}(\M)$.
\begin{prop}
\label{prop:duality}
 Let $(\M,g)$ be an asymptotically flat manifold  of the class $W_{s,\delta}^p$,
then $W_{-s,-\delta}^{p'}(\M)= \left(W_{s,\delta}^p(\M)\right)^\ast$.
\end{prop}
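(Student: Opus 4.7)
The natural pairing $J : W_{-s,-\delta}^{p'}(\M) \to (W_{s,\delta}^p(\M))^\ast$ given by $J(\varphi)(u) = \langle u, \varphi\rangle_\M$ is continuous by (\ref{eq:bilinear:2}), so the task reduces to showing that $J$ is a bijection with bounded inverse. My approach is to present $W_{s,\delta}^p(\M)$ as an isometric subspace of a finite direct sum of local model spaces, then apply Hahn--Banach and Theorem \ref{thm1}(c) to produce a dual element, and finally glue the resulting local representers back into a global distribution on $\M$.

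Concretely, the map
\begin{equation*}
T(u) := \bigl(\phi_i^\ast(\chi_i u)\bigr)_{i=1}^N\,\oplus\,\bigl(\varTheta_j^\ast(\alpha_j u)\bigr)_{j=1}^{N_0}
\end{equation*}
is an isometric embedding of $W_{s,\delta}^p(\M)$ into the finite $\ell^1$-direct sum
\begin{equation*}
X := \bigoplus_{i=1}^N W_{s,\delta}^p(\setR^n)\,\oplus\,\bigoplus_{j=1}^{N_0} W_s^p(\setR^n)
\end{equation*}
by the very definition of the norm (\ref{eq:norm-MA}), with closed image since $T$ is an isometry on a Banach space. By Theorem \ref{thm1}(c) the dual $X^\ast$ is identified with $\bigoplus_i W_{-s,-\delta}^{p'}(\setR^n) \oplus \bigoplus_j W_{-s}^{p'}(\setR^n)$ under the $\ell^\infty$-sum norm. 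Given $\ell \in (W_{s,\delta}^p(\M))^\ast$, Hahn--Banach extends $\ell\circ T^{-1}$ from $T(W_{s,\delta}^p(\M))$ to some $\tilde\ell \in X^\ast$ of equal norm, producing local representers $v_i \in W_{-s,-\delta}^{p'}(\setR^n)$ and $w_j \in W_{-s}^{p'}(\setR^n)$ with $\max_i\|v_i\|,\max_j\|w_j\| \leq \|\ell\|$ and
\begin{equation*}
\ell(u) = \sum_{i=1}^N \langle \phi_i^\ast(\chi_i u), v_i\rangle_W + \sum_{j=1}^{N_0} \langle \varTheta_j^\ast(\alpha_j u), w_j\rangle.
\end{equation*}

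The main obstacle is the gluing step, i.e.\ producing a single $\varphi\in W_{-s,-\delta}^{p'}(\M)$ realising the formula above as $\langle u,\varphi\rangle_\M$. I would choose auxiliary cutoffs $\eta_i \in C^\infty(\M)$ with $\eta_i \equiv 1$ on $\mathrm{supp}(\chi_i)$ and $\mathrm{supp}(\eta_i) \subset U_i$, and analogous $\beta_j$ for $V_j$. Using $\chi_i u = \eta_i\chi_i u$ and transferring $\phi_i^\ast(\eta_i)$ across the pairing $\langle\cdot,\cdot\rangle_W$ via Proposition \ref{prop:mult-smooth}, I may replace $v_i$ by $\phi_i^\ast(\eta_i)\,v_i$ without changing $\ell$, so each $v_i$ has support in $\phi_i^{-1}(\mathrm{supp}(\eta_i))$. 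The distributions $(\phi_i^{-1})^\ast(v_i)$ and $(\varTheta_j^{-1})^\ast(w_j)$ then extend by zero to $\M$, and I define $\varphi := \sum_i (\phi_i^{-1})^\ast(v_i) + \sum_j (\varTheta_j^{-1})^\ast(w_j)$. Unwinding the definition (\ref{eq:bilinear:1A}) of $\langle u,\varphi\rangle_\M$ yields $\langle u,\varphi\rangle_\M = \ell(u)$.

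Finally, I must verify that this $\varphi$ actually lies in $W_{-s,-\delta}^{p'}(\M)$, i.e.\ that each $\phi_k^\ast(\chi_k\varphi)$ and $\varTheta_l^\ast(\alpha_l\varphi)$ belongs to the corresponding Besov space on $\setR^n$ with norm controlled by $\|\ell\|$. Expanding $\chi_k\varphi$ as a finite sum, the cross terms involve the coordinate transitions $\phi_i\circ\phi_k^{-1}$ on the overlaps $U_i\cap U_k$; these are smooth diffeomorphisms on the relevant open sets, so Proposition \ref{prop:appen:1} together with the smooth multiplier estimate of Proposition \ref{prop:mult-smooth} bounds each cross term by a constant multiple of $\|v_i\|$ or $\|w_j\|$. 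Summing the finitely many contributions gives $\|\varphi\|_{W_{-s,-\delta}^{p'}(\M)} \lesssim \|\ell\|$, which combined with (\ref{eq:bilinear:2}) exhibits $J$ as a topological isomorphism. Injectivity of $J$ follows either as a by-product of this norm equivalence or more directly from the density of $C_0^\infty(\M)$ in $W_{s,\delta}^p(\M)$ (Proposition \ref{porp:approx_manofold}), since $\langle u,\varphi\rangle_\M$ reduces to the distributional pairing on smooth test functions.
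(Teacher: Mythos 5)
Your functional--analytic setup (the isometric embedding into the $\ell^1$--direct sum, Hahn--Banach, and the chartwise duality from Theorem \ref{thm1}(c)) is sound and does produce representers $v_i,w_j$ with $\ell(u)=\sum_i\langle\phi_i^\ast(\chi_i u),v_i\rangle_W+\sum_j\langle\varTheta_j^\ast(\alpha_j u),w_j\rangle$. The genuine gap is the gluing step: the claimed identity $\langle u,\varphi\rangle_\M=\ell(u)$ for $\varphi=\sum_i(\phi_i^{-1})^\ast\bigl(\phi_i^\ast(\eta_i)v_i\bigr)+\sum_j(\varTheta_j^{-1})^\ast(w_j)$ does not follow by ``unwinding'' (\ref{eq:bilinear:1}). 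That form applies the cutoff $\chi_k$ (resp.\ $\alpha_l$) to \emph{both} arguments, so the diagonal term of chart $k$ is $\langle\phi_k^\ast(\chi_k u),\phi_k^\ast(\chi_k)v_k\rangle_W=\langle\phi_k^\ast(\chi_k^2u),v_k\rangle_W$, i.e.\ it carries the weight $\chi_k^2$ rather than the weight $\chi_k$ appearing in your representation of $\ell$; moreover the off--diagonal terms $\langle\phi_k^\ast(\chi_k u),\phi_k^\ast\bigl(\chi_k\,(\phi_i^{-1})^\ast(\phi_i^\ast(\eta_i)v_i)\bigr)\rangle_W$, $i\neq k$, are nonzero on overlapping charts and nothing makes them cancel. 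Already for smooth $u,\varphi$ one has, schematically, $\langle u,\varphi\rangle_\M=\int\bigl(\sum_k\chi_k^2+\sum_l\alpha_l^2\bigr)u\varphi$ in local coordinates, and $\sum_k\chi_k^2+\sum_l\alpha_l^2\not\equiv1$; this is precisely why the paper passes to the renormalized partition $\{\tilde\chi_i^2,\tilde\alpha_j^2\}$ in (\ref{eq:bilinear:4}). So your glued $\varphi$ is indeed an element of $W_{-s,-\delta}^{p'}(\M)$ with norm $\lesssim\|\ell\|$, but it does not represent $\ell$ under $\langle\cdot,\cdot\rangle_\M$, and the surjectivity of your map $J$ is not established. (A repair would require at least renormalizing by the strictly positive function $\sum_k\chi_k^2+\sum_l\alpha_l^2$ and controlling the overlap cross terms -- an argument your sketch does not contain; the same weight issue also undercuts your closing remark that $\langle u,\varphi\rangle_\M$ ``reduces to the distributional pairing'' on test functions.)

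Note also that the proposition, as the paper proves it, is an isomorphism-of-spaces statement, not the stronger assertion that the specific form (\ref{eq:bilinear:1}) realizes every functional; and the paper's route avoids both Hahn--Banach and gluing. Given $T\in\bigl(W_{s,\delta}^p(\M)\bigr)^\ast$, one localizes the functional itself by setting $\phi_i^\ast(\chi_i T)(u):=T\bigl(\chi_i\,(u\circ\phi_i^{-1})\bigr)$ and $\varTheta_j^\ast(\alpha_j T)(u):=T\bigl(\alpha_j\,(u\circ\varTheta_j^{-1})\bigr)$; by Proposition \ref{prop:appen:3} (together with the smooth multiplier bound of Proposition \ref{prop:mult-smooth}) each localization is a bounded functional on $W_{s,\delta}^p(\setR^n)$, resp.\ $W_s^p(\setR^n)$, hence lies in $W_{-s,-\delta}^{p'}(\setR^n)$, resp.\ $W_{-s}^{p'}(\setR^n)$, by Theorem \ref{thm1}(c), and summing the finitely many chart contributions in the norm (\ref{eq:norm-M}) gives $\|T\|_{W_{-s,-\delta}^{p'}(\M)}\leq C(N+N_0)\|T\|$. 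This directly identifies the distribution induced by $T$ as an element of $W_{-s,-\delta}^{p'}(\M)$, which is the content of the reverse inclusion; the forward inclusion is (\ref{eq:bilinear:2}), exactly as in your first paragraph.
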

\begin{proof}
From the bilinear (\ref{eq:bilinear:1}) form and the inequality 
(\ref{eq:bilinear:2}) we see 
that $W_{-s,-\delta}^{p'}(\M)\subset \left(W_{s,\delta}^p(\M)\right)^\ast$. 
Thus it suffices to show the reverse inclusion.

So let  $T\in  \left(W_{s,\delta}^p(\M)\right)^\ast$  and set
\begin{math}
 \|T\|= \sup\{ |T(\varphi)|: \|\varphi\|_{W_{s,\delta}^p(\M)}\leq
1\}
\end{math}.
For 
$i=1,\ldots,N$, we define 
$\phi^\ast_i(\chi_iT)(u)=T(\phi_i^\ast(\chi_i) u\circ\phi_i^{-1}))$, where 
$u\in W_{s,\delta}^p(\setR^n)$, and for  $j=1,\ldots,N_0$, we define 
$\varTheta^\ast_j(\alpha_jT)(u)=T(\varTheta_j^\ast(\alpha_j)
u\circ\varTheta_j^{-1}))$, where 
$u\in W_{s}^p(\setR^n)$.
Then by 
Proposition \ref{prop:appen:3}, 
\begin{equation}
\label{eq:appen:4}
\begin{split}
 |\phi^\ast_i(\chi_iT)(u)| &=|T(\phi_i^\ast(\chi_i) u\circ\phi_i^{-1}))|\leq 
\|T\|\|\phi_i^\ast(\chi_i) u\circ\phi_i^{-1})\|_{W_{s,\delta}^p(\M)}\\ &\leq C 
\|T\|\|\phi_i^\ast(\chi_i) u\|_{W_{s,\delta}^p(\setR^n)},
\end{split}
\end{equation}
and
\begin{equation}
\label{eq:appen:5}
\begin{split}
|\varTheta^\ast_j(\alpha_jT)(u)|& =|T(\varTheta_j^\ast(\alpha_j)
u\circ\varTheta_j^{-1}))|\leq 
\|T\|\|\varTheta_j^\ast(\alpha_j)
u\circ\alpha_j^{-1})\|_{W_{s,\delta}^p(\M)}\\ &\leq C 
\|T\|\|\varTheta_j^\ast(\alpha_j) u\|_{W_{s}^p(\setR^n)}.
\end{split}
\end{equation}

Thus $\phi^\ast_i(\chi_iT)\in \left(W_{s,\delta}^p(\setR^n)\right)^\ast$, and  
hence Theorem \ref{thm1} (c) implies that  $\phi^\ast_i(\chi_iT)\in 
W_{-s,-\delta}^{p'}(\setR^n)$. 
Similarly   $\varTheta^\ast_j(\alpha_jT)\in W_{-s}^{p'}(\setR^n)$.  Computing 
the norm of $T$ according to (\ref{eq:norm-M}), we obtain from
(\ref{eq:appen:4})  and (\ref{eq:appen:5}) that
\begin{equation*}
\begin{split}
& \|T\|_{W_{-s,-\delta}^{p'}(\M)}=
\sum_{i=1}^N\left\|\phi_i^\ast(\chi_i T)
\right\|_{W_{-s,-\delta}^{p'}(\setR^n)}+\sum_{j=1}^{N_0}
\left\|\varTheta^\ast_j(\alpha_j T)
\right\|_{W_{-s}^{p'}(\setR^n)}\\  = &
\sum_{i=1}^{N}\sup\{|\langle \phi^\ast_i(\chi_i T), u\rangle_W|:
\|u\|_{W_{s,\delta}^p(\setR^n)}\leq 1\}+\sum_{j=1}^{N_0}\sup\{|\langle 
\varTheta^\ast_j(\alpha_j T), u\rangle|:
\|u\|_{W_s^p(\setR^n)}\leq 1\}
\\ 
\leq & C \|T\|(N+N_0).
\end{split}
\end{equation*}
Hence $T\in W_{-s,-\delta}^{p'}(\M)$.

\end{proof}

We shall now extend the multiplicity property to an asymptotically flat manifold 
 of the class $W_{s,\delta}^p$.

 \begin{prop}[Proposition \ref{prop:4}]
 \label{prop:4M}
 Let $\M$ be to an asymptotically flat manifold 
 of the class $W_{s,\delta}^p$, and   assume $s\leq \min\{s_1,s_2\}$, 
$s_1+s_2>s+\frac{n}{p}$,  $s_1+s_2\geq
n\cdot\max\{0,(\frac{2}{p}-1)\}$ and
$\delta\leq\delta_1+\delta_2+ \frac{n}{p}$, then the multiplication 
\begin{equation*}
 W_{s_1,\delta_1}^p(\M) \times  W_{s_2,\delta_2}^p(\M)\to W_{s,\delta}^p(\M)
\end{equation*}
is continuous.
  
\end{prop}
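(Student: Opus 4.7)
The plan is to reduce the multiplication on $\mathcal{M}$ to the Euclidean multiplication of Proposition \ref{prop:4} via the partition of unity in the norm (\ref{eq:norm-M}), using auxiliary smooth cutoffs to keep track of supports within single charts.

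First I would write
\begin{equation*}
\|uv\|_{W_{s,\delta}^p(\mathcal{M})} = \sum_{i=1}^N \|\phi_i^\ast(\chi_i uv)\|_{W_{s,\delta}^p(\mathbb{R}^n)} + \sum_{j=1}^{N_0}\|\varTheta_j^\ast(\alpha_j uv)\|_{W_{s}^p(\mathbb{R}^n)}
\end{equation*}
and treat each term separately. For the end contributions, I would choose, for each $i$, an auxiliary $\eta_i\in C^\infty(\mathcal{M})$ supported in $U_i$ with $\eta_i\equiv 1$ on $\mathrm{supp}(\chi_i)$ and $\phi_i^\ast(\eta_i)$ having bounded derivatives of all orders on $E_{r_i}$. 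Then $\chi_i uv = \chi_i(\eta_i u)(\eta_i v)$, so
\begin{equation*}
\phi_i^\ast(\chi_i uv) = \phi_i^\ast(\chi_i)\cdot \phi_i^\ast(\eta_i u)\cdot \phi_i^\ast(\eta_i v),
\end{equation*}
where the last two factors, extended by zero to $\mathbb{R}^n$, are supported in $E_{r_i}$. Combining Proposition \ref{prop:mult-smooth} (applied to the bounded smooth factor $\phi_i^\ast(\chi_i)$) with the Euclidean product estimate of Proposition \ref{prop:4} would give
\begin{equation*}
\|\phi_i^\ast(\chi_i uv)\|_{W_{s,\delta}^p(\mathbb{R}^n)} \lesssim \|\phi_i^\ast(\eta_i u)\|_{W_{s_1,\delta_1}^p(\mathbb{R}^n)}\,\|\phi_i^\ast(\eta_i v)\|_{W_{s_2,\delta_2}^p(\mathbb{R}^n)}.
\end{equation*}

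Next I would prove the single-chart bound $\|\phi_i^\ast(\eta_i u)\|_{W_{s_1,\delta_1}^p(\mathbb{R}^n)} \lesssim \|u\|_{W_{s_1,\delta_1}^p(\mathcal{M})}$ (and analogously for $v$). Since $\mathrm{supp}(\eta_i)\subset U_i$, only finitely many members of the partition $\{\chi_k,\alpha_l\}$ have supports meeting $\mathrm{supp}(\eta_i)$, and expanding $\eta_i u = \sum_k \eta_i\chi_k u + \sum_l \eta_i\alpha_l u$ reduces the task to finitely many overlap terms. For indices $k$ with $U_i\cap U_k\neq\emptyset$, I would write $\phi_i^\ast(\eta_i\chi_k u) = \phi_i^\ast(\eta_i)\cdot\bigl((\phi_i^{-1}\circ\phi_k)^\ast\bigr)\phi_k^\ast(\chi_k u)$ and invoke the change-of-variables Proposition \ref{prop:appen:1} together with Proposition \ref{prop:mult-smooth} to bound this by $\|\phi_k^\ast(\chi_k u)\|_{W_{s_1,\delta_1}^p(\mathbb{R}^n)}$. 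Overlap terms with compact charts $V_l$ are handled similarly, using that compact support makes weighted and unweighted norms equivalent. Summing yields the claimed estimate, which fed back into the previous display controls each end term by $\|u\|_{W_{s_1,\delta_1}^p(\mathcal{M})}\|v\|_{W_{s_2,\delta_2}^p(\mathcal{M})}$.

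The compact chart terms $\|\varTheta_j^\ast(\alpha_j uv)\|_{W_s^p(\mathbb{R}^n)}$ would be handled by the same scheme with cutoffs $\widetilde{\alpha}_j$ supported in $V_j$, equal to $1$ on $\mathrm{supp}(\alpha_j)$; here the unweighted Besov multiplication $W_{s_1}^p\times W_{s_2}^p\to W_s^p$ (which underlies Proposition \ref{prop:4}) plays the role of the Euclidean estimate, and since the supports are compact the $\delta$ parameters are irrelevant. Summing over $i$ and $j$ yields the proposition.

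The main obstacle is the single-chart bound in the second step: passing from the pullback norm on $\mathbb{R}^n$ to the intrinsic manifold norm requires a careful bookkeeping of the finite family of chart overlaps, combined with the fact that compactly supported distributions have comparable weighted and unweighted norms, so that the transition between end-type charts (with a weight) and interior charts (without a weight) does not introduce spurious terms. Once this passage is set up, the rest is a direct application of the Euclidean multiplication theorem.
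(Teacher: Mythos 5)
Your argument is correct in outline, but it takes a genuinely different and heavier route than the paper. You keep the original partition $\{\chi_i,\alpha_j\}$ in the decomposition of $uv$, which forces you to introduce auxiliary cutoffs $\eta_i$ with $\eta_i\equiv 1$ on $\supp(\chi_i)$ and then to prove the single--chart bound $\|\phi_i^\ast(\eta_i u)\|_{W_{s_1,\delta_1}^p(\setR^n)}\lesssim\|u\|_{W_{s_1,\delta_1}^p(\M)}$; that step needs the expansion of $\eta_i u$ over the partition, the change--of--variables Proposition \ref{prop:appen:1} for the chart transition maps on overlaps, and Proposition \ref{prop:mult-smooth}, i.e.\ an argument in the spirit of (a converse to) Proposition \ref{prop:appen:3}. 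The paper avoids all of this with one observation: the normalized family $\{\tilde{\chi}_i^2,\tilde{\alpha}_j^2\}$ is again a partition of unity, and since different partitions give equivalent norms in (\ref{eq:norm-M}), one may compute $\|uv\|_{W_{s,\delta}^p(\M)}$ with the squared partition; then $\chi_i^2\,uv=(\chi_i u)(\chi_i v)$ (and $\alpha_j^2\,uv=(\alpha_j u)(\alpha_j v)$) factor \emph{exactly} into the localized pieces that already appear in the norms of $u$ and $v$, so Proposition \ref{prop:4} (respectively the unweighted Besov multiplication) applies chart by chart and an elementary inequality finishes the proof, with no auxiliary cutoffs, no overlap bookkeeping, and no change of variables. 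What your route buys is a reusable single--chart lemma relating pullback and intrinsic norms; what it costs is precisely the delicate part you flag yourself, namely that Proposition \ref{prop:appen:1} must be applied to transition maps in a way compatible with the weight $(1+|x|)^\delta$ at the ends (and one must be able to shrink the partition slightly to build the $\eta_i$). These points can be handled at the same level of rigor as the paper's appendix, so I see no fatal gap, but the squared--partition trick makes them unnecessary.
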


\begin{proof}
Note that \begin{math}
\{\tilde{\chi}_i^2,\tilde{\alpha}_j^2\} :=
\left(\sum_{i=1}^N\chi_i^2+\sum_{j=1}^{N_0}\alpha_j^2\right)^{-1}\{\chi_i^2,
\alpha_j^2\}
\end{math} 
is also a partition of unity. Since different partitions of unity result in 
equivalent norms, we have that 
 \begin{equation*}
  \|uv\|_{W_{s,\delta}^p(\M)}\simeq\sum_{j=1}^{N_0} 
\|\varTheta_j^\ast\left(\alpha_j^2(uv)\right)\|_{W_{s}^p}+\sum_{i=0}^N
\|\phi_i^\ast(\chi_i^2 uv)\|_{W_{s,\delta}^p(\setR^n)}.
 \end{equation*}
From Proposition \ref{prop:4} we obtain that
\begin{equation*}
 \|\phi_i^\ast(\chi_i^2 
uv)\|_{W_{s,\delta}^p(\setR^n)}\lesssim \|\phi_i^\ast(\chi_i)
u)\|_{W_{s_1,\delta_1}^p(\setR^n)}\ \|\phi_i^\ast(\chi_i v)\|_{W_{s_2,\delta_2}
^p(\setR^n)},
\end{equation*}
and by the corresponding estimate in the unweighted Besov 
spaces (e.g. \cite[\S4.6.1]{runst96:_sobol_spaces_fract_order_nemyt}), we also 
have that 
\begin{equation*}
 \|\varTheta_j^\ast(\alpha_j^2
uv)\|_{W_{s}^p}\lesssim \|\varTheta_j^\ast(\alpha_j
u)\|_{W_{s_1}^p}\ \|\varTheta_j^\ast(\alpha_j 
v)\|_{W_{s_2}^p}.
\end{equation*}
Now we set $a_i=\|\phi_i^\ast(\chi_i) u)\|_{W_{s_1,\delta_1}^p(\setR^n)}$, $a_j= 
\|\varTheta_j^\ast(\alpha_ju)\|_{W_{s_1}^p}$, 
$b_i=\|\phi_i^\ast(\chi_i) v)\|_{W_{s_2,\delta_2}^p(\setR^n)}$ and $b_j = 
\|\varTheta_j^\ast(\alpha_j v)\|_{W_{s_2}^p}$.
Using the above two estimates and  an elementary 
inequality, we have that 
\begin{equation*}
 \begin{split}
\|uv\|_{W_{s,\delta}^p(\M)} & \simeq \sum_{i=1}^Na_ib_i
+\sum_{j=1}^{N_0} a_jb_j+ \leq  (N_0+N) \left(\sum_{i=1}^Na_i+ \sum_{j=1}^{N_0} 
a_j\right)
\left(\sum_{i=1}^Nb_i+ \sum_{j=1}^{N_0} b_j\right)
\\ &\lesssim \|u\|_{W_{s_1,\delta_1}^p(\M)} \|v\|_{W_{s_2,\delta_2}^p(\M)}.
\end{split}
\end{equation*}

\end{proof}

Finally we show Lemma \ref{lem:elliptic2} on
asymptotically  flat manifolds.
\begin{lem}[Lemma \ref{lem:elliptic2}]
\label{lemma:3.5}
{Let $L$ be an elliptic operator on asymptotically flat manifold   
$(\M,g)$ of class $W_{s,\delta}^p$ and assume 
$L\in{\bf Asy}(A_\infty,s,\delta,p)$,  $s\in(\frac{n}{p},\infty)\cap 
[1,\infty)$, $\delta \in 
(-\frac{n}{p},-2+\frac{n}{p^\prime})$
$p\in(1,\infty)$ and $\delta'<\delta$. Then for any $u\in
W_{s,\delta}^p(\M)$,
\begin{equation*}
 \left\|u\right\|_{W_{s,\delta}^p(\M)}\leq C\left\{
\left\|Lu\right\|_{W_{s-2,\delta+2}^p(\M)}+
\left\|u\right\|_{W_{s-1,\delta'}^p(\M)}\right\},
\end{equation*}
and the constant $C$ depends on  $W_{s,\delta}^p$-norms of the coefficients of
$L$, $s,\delta,p$ and $\delta'$.}
\end{lem}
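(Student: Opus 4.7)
The plan is to patch together the Euclidean estimate of Lemma \ref{lem:elliptic2} on each exterior chart with the local unweighted estimate of Lemma \ref{lem:3} on each interior chart, using the definition of the norm $\|u\|_{W_{s,\delta}^p(\M)}$ as the finite sum (\ref{eq:norm-M}) over the partition of unity $\{\chi_i,\alpha_j\}$. So first I write
\begin{equation*}
\|u\|_{W_{s,\delta}^p(\M)} = \sum_{i=1}^N\|\phi_i^\ast(\chi_i u)\|_{W_{s,\delta}^p(\setR^n)} + \sum_{j=1}^{N_0}\|\varTheta_j^\ast(\alpha_j u)\|_{W_{s}^p(\setR^n)}
\end{equation*}
and estimate each piece separately.

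For a fixed exterior chart $(U_i,\phi_i)$, pull back $L$ via $\phi_i$ to obtain an elliptic operator $\widetilde L_i$ on $E_{r_i}$. By condition (4) of Definition \ref{def:asymp} together with Proposition \ref{prop:1}, Proposition \ref{prop:4} and the Moser estimate (Proposition \ref{prop:Moser}), the coefficients of $\widetilde L_i - A_\infty$ lie in $W_{s,\delta}^p(E_{r_i})$, $W_{s-1,\delta+1}^p(E_{r_i})$ and $W_{s-2,\delta+2}^p(E_{r_i})$ respectively. Multiplying these coefficients by a smooth cutoff equal to one on $E_{r_i+1}$ and extending by $A_\infty$ inside gives an operator still in ${\bf Asy}(A_\infty,s,\delta,p)$ on all of $\setR^n$ and uniformly elliptic (this uses condition (3) of Definition \ref{def:asymp}). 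Since $\phi_i^\ast(\chi_i u)$ has support in $E_{r_i}$, Lemma \ref{lem:elliptic2} in $\setR^n$ yields
\begin{equation*}
\|\phi_i^\ast(\chi_i u)\|_{W_{s,\delta}^p(\setR^n)} \leq C\bigl\{\|\widetilde L_i(\phi_i^\ast(\chi_i u))\|_{W_{s-2,\delta+2}^p(\setR^n)} + \|\phi_i^\ast(\chi_i u)\|_{W_{s-1,\delta'}^p(\setR^n)}\bigr\}.
\end{equation*}
Using the identity $L(\chi_i u) = \chi_i L u + [L,\chi_i]u$, Proposition \ref{prop:mult-smooth} bounds the $\chi_i Lu$ contribution by $\|Lu\|_{W_{s-2,\delta+2}^p(\M)}$. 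The commutator $[L,\chi_i]$ is a first order operator whose coefficients are products of coefficients of $L$ with derivatives of $\chi_i$; applying Propositions \ref{prop:mult-smooth}, \ref{prop:4M} and \ref{prop:1} in turn absorbs it into $C\|u\|_{W_{s-1,\delta'}^p(\M)}$.

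For each interior chart $(V_j,\varTheta_j)$ the cutoff $\alpha_j$ is compactly supported in $V_j$, so $\varTheta_j^\ast(\alpha_j u)$ lies in $W_s^p$ with compact support in $B_j$, and the pullback operator $\widetilde L_j$ is elliptic on a neighbourhood of that support with coefficients in $W_{s-k}^p$ (again by Definition \ref{def:asymp}(3) and Propositions \ref{prop:4}, \ref{prop:Moser}). Lemma \ref{lem:3} therefore gives
\begin{equation*}
\|\varTheta_j^\ast(\alpha_j u)\|_{W_s^p} \leq C\bigl\{\|\widetilde L_j(\varTheta_j^\ast(\alpha_j u))\|_{W_{s-2}^p} + \|\varTheta_j^\ast(\alpha_j u)\|_{W_{s-1}^p}\bigr\},
\end{equation*}
and the same commutator identity, together with the fact that on a compact set $W_{s-2}^p \simeq W_{s-2,\delta+2}^p$ and $W_{s-1}^p \simeq W_{s-1,\delta'}^p$ as norms, yields an upper bound by $\|Lu\|_{W_{s-2,\delta+2}^p(\M)} + \|u\|_{W_{s-1,\delta'}^p(\M)}$.

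Summing the two families of estimates and invoking (\ref{eq:norm-M}) together with Proposition \ref{prop:appen:3} yields the claimed inequality. The main obstacle is the careful verification that, in each exterior chart, the pulled-back operator admits an extension to $\setR^n$ which remains in ${\bf Asy}(A_\infty,s,\delta,p)$ and stays uniformly elliptic (so that the constant in Lemma \ref{lem:elliptic2} depends only on the geometric data of $\M$), and that the commutator terms, whose symbols involve $d\chi_i$ that need not be compactly supported on $\M$, can still be absorbed in the lower order norm $\|u\|_{W_{s-1,\delta'}^p(\M)}$ via the multiplication properties of Section \ref{sec:some-properties-h_s}.
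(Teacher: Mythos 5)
Your proposal follows essentially the same route as the paper's proof: decompose the norm via (\ref{eq:norm-M}), pull $L$ back on each exterior chart, extend it to an operator in ${\bf Asy}(A_\infty,s,\delta,p)$ on $\setR^n$ and apply Lemma \ref{lem:elliptic2} there, apply Lemma \ref{lem:3} on the charts covering the compact core, control the commutators $[L,\chi_i]$, and sum. The checks on the pulled-back coefficients (Definition \ref{def:asymp} together with Propositions \ref{prop:1}, \ref{prop:4}, \ref{prop:Moser}, including the Christoffel-symbol terms) are exactly what the paper does.

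The one place where your justification is not sound as written is the absorption of the exterior commutator term. You claim, and in your closing paragraph even insist for the case where $d\chi_i$ is not compactly supported, that $[L,\chi_i]u$ can be pushed into $C\|u\|_{W_{s-1,\delta'}^p(\M)}$ purely ``via the multiplication properties.'' That cannot work on weights alone: the commutator is first order, so its leading contribution is of the form $a_2\,\partial\chi_i\,\partial u$, and $\partial u$ is only controlled in $W_{s-2,\delta'+1}^p$ by $\|u\|_{W_{s-1,\delta'}^p}$; since $\delta'<\delta$, the embedding $W_{s-2,\delta'+1}^p\hookrightarrow W_{s-2,\delta+2}^p$ required to land in the target norm fails (Proposition \ref{prop:2}(a) needs $\delta+2\leq\delta'+1$). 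What saves the argument — and what the paper's proof uses explicitly — is that in each exterior chart $\tilde\chi_i\equiv 1$ for large $|x|$, so the coefficients of the commutator (which involve $\partial\tilde\chi_i$, $\partial^2\tilde\chi_i$) are compactly supported; for compactly supported functions the $W_{s-2,\delta+2}^p$-norm is equivalent to the $W_{s-2,\delta''}^p$-norm for any $\delta''$, and only after this weight switch do Propositions \ref{prop:mult-smooth}, \ref{prop:4} and \ref{prop:1} give the bound by $\|u\|_{W_{s-1,\delta'}^p}$. So you should replace the caveat at the end by the observation that the compact support of $d\tilde\chi_i$ does hold in the chart coordinates (each $\phi_i^{-1}(U_i)$ is an exterior region and $\chi_i$ is constant near infinity on its end), and insert the weight-equivalence step before invoking the multiplication propositions; with that correction your argument coincides with the paper's.
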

\begin{proof}
Let $u\in W_{s,\delta}^p(\M)$ and set $\tilde{u}=\phi_i^\ast(u)$, 
$\tilde{\chi}=\phi_i^\ast(\chi_i)$  and
$\tilde{L}=\phi_i^\ast(L)$, then in local coordinates 
\begin{equation*}
(\tilde{L}\tilde{u})^i= 
(\tilde{a}_2)_{ij}^{ab}\left(\partial_a\partial_b\tilde{u}^j-\Gamma_{ab}
^c\partial_c \tilde{u}^j\right)+(\tilde{a}_1)_{ij} ^a\partial_a \tilde{u}^j 
+(\tilde{a}_0)_{ij}\tilde{u}^j,
\end{equation*}
where $\Gamma_{ab}^j$ 
denote the 
Christoffel symbols. Then 
$\tilde{L}\tilde{u}$ is an elliptic operator
in  the set $E_{r_i}$, and therefore it can be extend to $\setR^n$ 
such that it will remain elliptic in $\setR^n$. Using  
Definition \ref{def:asymp}, the multiplication  property, Proposition 
\ref{prop:4},   as well as
Propositions \ref{prop:1} and 
\ref{prop:Moser}, we obtain that $(\tilde{a}_2-A_\infty)\in 
W_{s,\delta}^p(\setR^n)$ and $(-\tilde{a}_2\Gamma_{ab}^c+\tilde{a}_1)\in 
W_{s-1,\delta+1}^p(\setR^n)$. 
 Hence we can apply Lemma  \ref{lem:elliptic2} and  obtain that 
\begin{equation}
\label{eq:appen:7.8}
 \left\|\tilde{\chi}^2\tilde{u}\right\|_{W_{s,\delta}^p(\setR^n)}\leq
C\left\{\left\|\tilde{L}(\tilde{\chi}^2\tilde{u})\right\|_{W_{s-2,\delta+2}
^p(\setR^n) }
+\left\|\tilde{\chi}^2\tilde{u}\right\|_{W_{s-1,\delta'}^p(\setR^n)}\right\}.
\end{equation} 
We can now write that
$\tilde{L}(\tilde{\chi}^2\tilde{u})=\tilde{\chi}^2\tilde{L}(\tilde{u})+
\tilde{\chi}R\tilde{u}$, where $R$ is an operator of first order and
its coefficients contain derivatives of $\tilde{\chi}$.
Since $\tilde{\chi}(x)=1$ for large $|x|$, the coefficients of $R$
have compact support, and therefore the norm
$\left\|\tilde{\chi}R\tilde{u}\right\|_{W_{s-2,\delta+2}^p(\setR^n)}$
is equivalent to
$\left\|\tilde{\chi}R\tilde{u}\right\|_{W_{s-2,\delta''}^p(\setR^n)}$
for any choice of $\delta''$.
Applying the multiplication properties,
Proposition \ref{prop:4}, we have that
\begin{equation*}
\begin{split}
\left\|\tilde{\chi}R\tilde{u}\right\|_{W_{s-2,\delta+2}^p(\setR^n)} & \leq 
C\left\{
\left\|\tilde{\chi}\partial\tilde{u}\right\|_{W_{s-2,\delta'+1}^p(\setR^n)}+
\left\|\tilde{\chi}\tilde{u}\right\|_{W_{s-2,\delta'}^p(\setR^n)}
\right\}\\ &\leq C
\left\{
\left\|\tilde{\chi}\tilde{u}\right\|_{W_{s-1,\delta'}^p(\setR^n)}+
\left\|\tilde{\chi}\tilde{u}\right\|_{W_{s-1,\delta'}^p(\setR^n)}\right\}
\leq C \left\|\tilde{\chi}\tilde{u}\right\|_{W_{s-1,\delta'}^p(\setR^n)}.
\end{split}
\end{equation*}
Since multiplication with $\tilde{\chi}$ or $\tilde{\chi}^2$ results
in equivalent norms, we conclude  from  the above inequality and 
(\ref{eq:appen:7.8})  
that 
\begin{equation}
\label{eq:appen:6}
 \left\|\tilde{\chi}\tilde{u}\right\|_{W_{s,\delta}^p(\setR^n)}\leq
C\left\{\left\|\tilde{\chi}\tilde{L}(\tilde{u})\right\|_{W_{s-2,
\delta+2 }
^p(\setR^n) }
+\left\|\tilde{\chi}\tilde{u}\right\|_{W_{s-1,\delta'}^p(\setR^n)}\right\}.
\end{equation}
For the covering of the compact part we apply Lemma 32 of 
\cite{Holst_Nagy_Tsogtgerel} and obtain that
\begin{equation} 
\label{eq:appen:7}
\left\|\varTheta_j^\ast(\alpha_j u)\right\|_{W_{s}^p(\setR^n)}
\leq
C\left\{\left\|\varTheta_j^\ast(\alpha_jL u)\right\|_{W_{s-2}
^p(\setR^n) } 
+\left\|\varTheta_j^\ast(\alpha_j 
u)\right\|_{W_{s-1}^p(\setR^n)}\right\}.
\end{equation}
Recalling the form of the norm (\ref{eq:norm-M}), we see that inequalities 
(\ref{eq:appen:6}) and (\ref{eq:appen:7}) complete the proof. 

\end{proof}

\vspace{5mm}

\bibliographystyle{amsplain}
\bibliography{bibgraf}

\end{document}